\newcommandx{\unsure}[2]{\todo[linecolor=red,backgroundcolor=red!25,bordercolor=red]{#2}\texthl{#1}}
\newcommand{\Lip}{\ensuremath{\mathrm{Lip}}}
\newcommand{\Rea}{\mathbb{R}}
\newcommand{\Nat}{\mathbb{N}}
\newcommand{\Int}{\mathbb{Z}}
\newcommand{\F}{\mathcal{F}}
\newcommand{\HH}{\mathbb{H}^n}
\newcommand{\He}{\mathbb{H}}
\newcommand{\bG}{\mathbf{G}}
\newcommand{\geps}{g_{\varepsilon}}
\def\FF{\mathcal F}
\def\MM{\mathcal{M}}
\def\NN{\mathcal{N}}
\newcommand{\ver}[1][]{
   \ifthenelse{ \equal{#1}{} }
      {\ensuremath{V_{ext}}}
      {\ensuremath{V_{ext}(#1)}}
}
\newcommand{\edg}[1][]{
   \ifthenelse{ \equal{#1}{} }
      {\ensuremath{E_{ext}}}
      {\ensuremath{E_{ext}(#1)}}
}
\newcommand\absb[2]{\csname#1l\endcsname|#2\csname#1r\endcsname|}
\newcommand\norm[1]{\mathopen\|#1\mathclose\|}
\newcommand\normb[2]{\csname#1l\endcsname\|#2\csname#1r\endcsname\|}
\def \Span {\operatorname{span}}
\def \LIso {\operatorname{LIso}}
\def \ext {\operatorname{ext}} 
\def\restriction{\!\upharpoonright}
\def\setsep{:\;}
\newtheorem{theorem}{Theorem}[section]
\newtheorem{proposition}[theorem]{Proposition}
\newtheorem{corollary}[theorem]{Corollary}
\newtheorem{lemma}[theorem]{Lemma}
\newtheorem{fact}[theorem]{Fact}
\theoremstyle{definition}
\newtheorem{definition}[theorem]{Definition}
\newtheorem{notation}[theorem]{Notation}
\newcounter{claimcounter}[section]
\newtheorem{claim}[claimcounter]{Claim}
\newtheorem*{claim*}{Claim}
\newtheorem{remark}[theorem]{Remark}
\newtheorem{example}[theorem]{Example}
\newtheorem{examples}[theorem]{Examples}
\newcounter{que}
\newtheorem{question}[que]{Question}
\newtheorem*{example*}{Example}
\title{Isometries of Lipschitz-free Banach spaces}
\author[M. C\' uth]{Marek C\'uth}
\author[M. Doucha]{Michal Doucha}
\author[T. Titkos]{Tam\'as Titkos}
\email{cuth@karlin.mff.cuni.cz}
\email{doucha@math.cas.cz}
\email{titkos@renyi.hu}
\address[M.~C\' uth]{Charles University, Faculty of Mathematics and Physics, Department of Mathematical Analysis, Sokolovsk\'a 83, 186 75 Prague 8, Czech Republic}
\address[M.~Doucha]{Institute of Mathematics of the Czech Academy of Sciences, \v{Z}itn\'a 25, 115 67 Prague 1, Czech Republic}
\address[T.~Titkos]{HUN-REN Alfr\'ed R\'enyi Institute of Mathematics, Re\'altanoda u. 13-15.,
Budapest 1053, Hungary}
\subjclass[2020]{Primary: 46B04, 47D03, 51F30 Secondary: 46B80, 49Q22, 54E50}
\keywords{Isometries, linear isometry group, isometric rigidity, Lipschitz-free space, Carnot group}
\thanks{M. C\'uth was supported by the GA\v{C}R project 23-04776S.
M. Doucha was supported by the GA\v{C}R project 22-07833K and by the Czech Academy of Sciences (RVO 67985840). T. Titkos is supported by the Hungarian National Research, Development and Innovation Office - NKFIH grant no. K134944) and by the Momentum program of the Hungarian Academy of Sciences under grant agreement no. LP2021-15/2021.}
\begin{document}
\maketitle
\begin{abstract}
 We describe surjective linear isometries and linear isometry groups of a large class of Lipschitz-free spaces that includes e.g. Lipschitz-free spaces over any graph. We define the notion of a Lipschitz-free rigid metric space whose Lipschitz-free space only admits surjective linear isometries coming from surjective dilations (i.e. rescaled isometries) of the metric space itself. We show this class of metric spaces is surprisingly rich and contains all $3$-connected graphs as well as geometric examples such as non-abelian Carnot groups with horizontally strictly convex norms. We prove that every metric space isometrically embeds into a Lipschitz-free rigid space that has only three more points.
\end{abstract}

\section{Introduction}

The study of linear isometries of Banach spaces is as old as Banach spaces themselves. Indeed, Banach already in \cite{Banachbook} described linear isometries of separable $C(K)$ spaces, a result that has later become known as the Banach-Stone theorem. Banach himself later described the isometries of $L_p[0,1]$, for $p\in \left[1,\infty\right)\setminus\{2\}$, which was later generalized by Lamperti to all $L_p(X,\mu)$ spaces and became known as the Banach-Lamperti theorem, which is relevant also for the topic of the present paper. Since then, linear isometries of many more classes of Banach spaces have been described and we refer the reader to the monographs \cite{FleJa1,FleJa2} for a detailed account on this topic. In this paper we continue this line of research for the class of Lipschitz-free Banach spaces and, to a smaller extent, also their duals. These spaces are free objects in the category of Banach spaces over the category of metric spaces and due to their close relation to Wasserstein spaces they attract the attention of researchers not only from (linear and non-linear) functional analysis, but also metric geometry, optimal transport and computer science, and have become one of the most intensively studied class of Banach spaces. We refer to \cite{Weaverbook} for an introduction.

Anyone familiar with these spaces will immediately notice that every isometry of the underlying metric space induces a linear isometry of the corresponding Lipschitz-free space. The converse is true for some metric spaces and fails dramatically for other. Understanding when and why this happens significantly improves our knowledge of the linear structure of these Banach spaces and we believe is worth of studying. Let us remark we are not the first ones to undertake this research. Surjective linear isometries of Lipschitz-free spaces over so-called uniformly concave metric spaces are fully described in \cite[Section 3.8]{Weaverbook}, which is based on a much older result of Mayer-Wolf from \cite{M-W81}. Very interesting results were obtained recently by Alexander, Fradelizi, Garc\' ia-Lirola and Zvavitch in \cite{AFGZ} where they describe linear isometries of finite-dimensional Lipschitz-free spaces using graph-theory techniques. One also has to keep in mind that Lipschitz-free spaces over $\Rea$ and $\Nat$ are $L_1$-spaces, therefore their isometries are described by the Banach-Lamperti theorem - not in the language of the Lipschitz-free spaces though, a work that is yet to be done and could shed some light on how to deal with isometries of Lipschitz-free spaces over some geodesic metric spaces such as $\Rea^n$.

This paper generalizes both \cite[Section 3.8]{Weaverbook} and \cite{AFGZ}, where especially the latter was an inspiration to many results presented here. Similarly as \cite{AFGZ}, it uses graph-theory techniques together with the recent advances of Aliaga et al. (\cite{AG19,AP20,APPP20}) on the structure of extreme points in Lipschitz-free spaces.\medskip

Let us summarize the main results of the paper.
\begin{itemize}
    \item For any metric space $\MM$, which can be viewed as a complete weighted directed graph, we define a certain subgraph $V(\MM)$. If $V(\MM)$ is connected and its vertices are dense in $\MM$, in which case $\MM$ is called a weak Prague space, there is a one-to-one correspondence between linear isometries of the Lipschitz-free space $\F(\MM)$ and special edge bijections of $V(\MM)$. See Section~\ref{sec:cyclicbijections} for details. In particular, we fully describe the linear isometry groups of Lipschitz-free spaces (and also $\Lip_0$-spaces) over any graphs. See Section~\ref{sec:graphsIsometries}.
    \item If the subgraph $V(\MM)$ is moreover $3$-connected, then $\MM$ is Lipschitz-free rigid meaning that there is a one-to-one correspondence between surjective linear isometries of $\F(\MM)$ and surjective dilations of $\MM$. See Theorem~\ref{thm:3ConnectedImpliesRigid}.
    \item We show that the class of Lipschitz-free rigid metric spaces is quite rich. Besides $3$-connected graphs it contains e.g. non-abelian Carnot groups with so-called horizontally strictly convex norms which includes the Heisenberg group with the Kor\' anyi norm (see Section~\ref{sec:examples}). One can conveniently build new Lipschitz-free rigid spaces using the $\ell_p$-sum operations (see Section~\ref{sec:buildingnew}). In particular, every metric space isometrically embeds into a Lipschitz-free rigid metric space containing only three new points (see Corrolary~\ref{cor:unionsRigid}).
\end{itemize}\medskip

We remark that there also exists a parallel and very active research on isometries and isometry groups of $p$-Wasserstein spaces of measures over metric spaces, initiated by Kloeckner in \cite{Kloe10} and \cite{BertrandKloeckner-Hadamard}. Because of this and the close relation between Wasserstein spaces and Lipschitz-free spaces mentioned above, one of our initial motivations was to explore the similarities and differences in the structure of isometries of Lipschitz-free spaces and $1$-Wasserstein spaces over the same metric space. It turns out that $1$-Wasserstein spaces seem to be much more rigid than Lipschitz-free spaces and the methods showing that are quite different. We refer to \cite{GTV-Hilbert} and references therein for a recent account on this topic.\medskip

The recommendation on how to read this paper is the following. The reader should read Sections~\ref{sec:prelim}, \ref{sec:cyclicbijections} and~\ref{sec:rigid} in their natural order, where we introduce the notations and basic facts, define the central notions of (weak) Prague spaces and characterize the surjective isometries of their Lipschitz-free spaces, and introduce the Lipschitz-free rigid metric spaces respectively. The remaining Sections~\ref{sec:buildingnew}, \ref{sec:graphsIsometries} and~\ref{sec:examples} can be read independently. There we build new Lipschitz-free rigid spaces, completely describe linear isometry groups of Lipschitz-free spaces over graphs, and present some examples, notably showing that non-abelian Carnot groups with horizontally strictly convex norms are Lipschitz-free rigid, respectively.

\section{Preliminaries and notation}\label{sec:prelim}

\subsection{Lipschitz-free spaces}
Let $(\MM,d)$ be a metric space. There are several equivalent definitions of the Lipschitz-free Banach space $\F(\MM)$ over $\MM$. The most common one requires a choice of a distinguished point $0\in\MM$ and $\F(\MM)$ is then defined as the closed linear span of $\{\delta(m)\colon m\in M\}\subseteq(\Lip_0(\MM))^*$, where $\delta(m):\Lip_0(\MM)\to\Rea$, for $m\in\MM$, is the evaluation functional $f\in\Lip_0(\MM)\to f(m)$. In this paper we shall use another definition (that is used in \cite{Weaverbook}) that avoids the necessity of choosing a point, whose advantage will become apparent shortly. Denote by $\F_0(\MM)$ the real vector space $\{x:\MM\to\Rea\colon |\mathrm{supp}(x)|<\infty,\; \sum_{m\in\MM} x(m)=0\}$ of \emph{molecules} over $\MM$. For any $x\in\F_0(\MM)$ and a function $f:\MM\to\Rea$ denote by $f(x)$ the sum $\sum_{m\in\MM} x(m)f(m)$ and set \[\|x\|:=\sup_f |f(x)|,\] where the supremum is taken over all $1$-Lipschitz functions. This is a norm on $\F_0(\MM)$ and we denote by $\F(\MM)$ the corresponding completion. The reason to use this definition is that given any bijective isometry $\phi:\MM\to\MM$, $\phi$ uniquely extends to a bijective isometry of $\F_0(\MM)$ and therefore also uniquely to a bijective isometry of $\F(\MM)$. In fact, given any surjective $a$-dilation $\phi:\MM\to\MM$, where $a>0$, $\phi$ uniquely extends to a bijective isometry of $\F(\MM)$. We recall that a map $\phi:\MM\to\MM$ is called an \emph{$a$-dilation} if for every $x,y\in\MM$ we have $d(x,y)=ad(\phi(x),\phi(y))$. Let us also note here that the group of linear isometries of a Banach space $X$ shall be denoted by $\LIso(X)$.

For $x,y\in\MM$, $x\neq y$ we denote $m_{x,y}:=\tfrac{\chi_{\{x\}} - \chi_{\{y\}}}{d(x,y)}\in \F(\MM)$, the \emph{normalized elementary molecule} supported on $x$ and $y$. An important feature of Lipschitz-free spaces is their universal property, see \cite[Theorem 3.6]{Weaverbook}. Namely, if we fix any base point $0_\MM\in\MM$, then the map $\delta:\MM\to \F(\MM)$ defined as $\delta(x) = d(x,0_\MM)m_{x,0_\MM}$ for $x\in\MM$ is an isometric embedding and moreover, for every Banach space $X$ and a Lipschitz map $f:\MM\to X$ with $f(0_\MM) = 0$ there is a unique bounded linear map $T_f:\F(\MM)\to X$ satisfying $\|T_f\| = \Lip(f)$ and $T_f\circ \delta = f$, we say that $T_f$ is the \emph{canonical linearization of $f$}.

We refer the reader to \cite[Section 3]{Weaverbook}, where all of the above (and more) information concerning Lipschitz-free spaces may be found.
\subsection{Graphs}
We shall work both with directed and undirected graphs in this paper. An \emph{undirected graph} is a pair $(V,E)$ where $V$ is a set (of vertices) and $E\subseteq \{e\in\mathcal{P}(V)\colon |e|=2\}$ is a set of edges viewed as a set of pairs of vertices. Notice that with this definition we do not allow loops and multiple edges between pairs of vertices.

A \emph{directed graph} is a pair $(V,E)$ where $V$ is again a set (of vertices) and $E\subseteq V^2\setminus \{(v,v)\colon v\in V\}$ is a set of ordered pairs of distinct vertices. We \emph{emphasize} that all the directed graphs $(V,E)$ in this paper have the property that for each directed edge $e=(v_1,v_2)\in E$ there is an edge $(v_2,v_1)\in E$ with the opposite orientation which we shall denote by $-e$. Notice that such directed graphs are in one-to-one correspondence with undirected graphs.

If $(V,E)$ is a directed graph and $e=(v_1,v_2)\in E$, then we call $v_1$, resp. $v_2$ the source, resp. range of $e$, and denote them by $s(e)$, resp. $r(e)$.   A \emph{cycle}, or more precisely a \emph{directed cycle}, in $(V,E)$ is a set of edges $\{e_1,\ldots,e_n\}\subseteq E$ such that $r(e_i)=s(e_{i+1})$, for $i<n$, and $r(e_n)=s(e_1)$. We say that the sequence of edges $(e_1,\ldots,e_n)\subseteq E$ is an \emph{unoriented cycle} if there are signs $(\varepsilon_i)_{i=1}^n\in \{-1,1\}^n$ such that $(\varepsilon_1 e_1,\ldots,\varepsilon_n e_n)$ is an oriented cycle. A cycle is \emph{simple} if it is not a concatenation of two proper subcycles. An \emph{edge path} from a vertex $v_1$ to a vertex $v_2$ is a sequence of edges $(e_1,\ldots,e_n)$ such that $s(e_1)=v_1$, $r(e_n)=v_2$ and $r(e_i)=s(e_{i+1})$ for $i<n$. There is also an obvious definition of a cycle for undirected graphs.

If $(V,E)$ is a directed graph and $A\subseteq E$, then we denote by $-A$ the set $\{-e\colon e\in A\}$ and say that $A$ is \emph{symmetric} if $A=-A$. For both a directed and undirected graph $(V,E)$ we shall often identify any $E'\subseteq E$ with the corresponding subgraph of $(V,E)$ whose set of edges is $E'$ and contains exactly those vertices that are incident to $E'$.

Occasionally, we also work with \emph{weighted graphs}, which is a graph (directed or undirected) equipped with a weight function $w:E\to\Rea_0^+$. If the graph is directed we also require that $w$ is symmetric as well, i.e. $w(e)=w(-e)$ for all $e\in E$.

A (directed or undirected) graph $(V,E)$ is \emph{$n$-connected}, for $n\geq 1$, if upon removing $n-1$ vertices the graph remains connected. Note that the one vertex graph is connected. We also note that directed graphs are connected if there is a \emph{directed} edge path between any two vertices. We define a relation $\sim$ on the set of edges $E$ of an undirected graph, where two edges are related if they lie on a common simple cycle. This relation is clearly symmetric and transitive. So, upon adding the diagonal to the relation, it is also reflexive and thus an equivalence relation. \emph{Edge components} are equivalence classes with respect to this equivalence relation. We extend these notions also for directed graphs $(V,E)$ that we consider, i.e. having the property that $E$ is symmetric. For each $e\in E$ let $[e]=\{e,-e\}$ and notice that $(V,\{[e]\colon e\in E\})$ is an undirected graph. Then set $e\sim f$, for $e,f\in E$ if $[e]\sim [f]$ in the undirected graph. The edge components of directed graphs are defined analogously.

We shall need the following fact concerning (infinite) graphs, which we will employ in a few situations. This is a standard theorem by Whitney for finite graphs. We refer to \cite[Section 3.2]{Graphbook} where the fact is proved. We remark that the implicit assumption there is that the graph is finite, but the proof works verbatim also in the infinite case.

\begin{fact}{\cite[Section 3.2]{Graphbook}}\label{fact:edgesConnected}Let $G = (V,E)$ be a graph. Then $G$ is 2-connected graph if and only if every two edges are contained in a simple cycle.
\end{fact}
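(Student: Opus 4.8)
The plan is to prove the two implications separately, treating the edge version as a corollary of the classical \emph{vertex} version of Whitney's theorem, which in turn follows from Menger's theorem.

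For the forward implication, suppose $G=(V,E)$ is $2$-connected and let $e,f\in E$ be two edges. First I would reduce to the statement that any two \emph{vertices} of a $2$-connected graph lie on a common cycle. To do this, subdivide $e$ by a new vertex $a$ and $f$ by a new vertex $b$, obtaining a graph $G'$. A short check shows subdivision preserves $2$-connectivity: deleting $a$ from $G'$ yields $G$ with the edge $e$ removed, which is connected because a $2$-connected graph is $2$-edge-connected, and deleting any original vertex $w$ leaves a subdivision of the connected graph $G-w$. Granting the vertex version, $a$ and $b$ lie on a common simple cycle in $G'$; since this cycle passes through the subdivision vertex $a$ it necessarily traverses both half-edges of $e$, hence contains $e$, and similarly it contains $f$. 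Undoing the subdivisions turns this cycle into a simple cycle of $G$ through both $e$ and $f$.

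For the vertex version I would invoke Menger's theorem: in a $2$-connected graph no single vertex separates a given pair $u,v$, so there exist at least two internally vertex-disjoint $u$-$v$ paths (if $u,v$ are adjacent, use the edge $uv$ as one path and a $u$-$v$ path in $G-uv$, which exists by $2$-edge-connectivity, as the other), and the union of two such paths is a cycle through $u$ and $v$. Since Menger's theorem between two fixed vertices is a statement about finitely many paths and a minimal separator, it holds verbatim for infinite graphs, so no finiteness of $G$ is needed here. For the converse, assume every two edges of $G$ lie on a common simple cycle, and that $G$ has no isolated vertices so that the hypothesis has content at every vertex. Connectedness follows because for any two vertices $x,y$, picking incident edges $e_x,e_y$ and a common cycle through them produces a walk, hence a path, from $x$ to $y$. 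To see there is no cut vertex, suppose toward a contradiction that deleting $v$ disconnects $G$, and choose an edge $e$ meeting one component $C_1$ and an edge $f$ meeting another component $C_2$. Any simple cycle containing both $e$ and $f$ must visit vertices of $C_1$ and of $C_2$; since every path between $C_1$ and $C_2$ passes through $v$, the cyclic sequence of vertices would have to return to $v$ each time it transitions between the $C_1$-part and the $C_2$-part, forcing $v$ to appear at least twice and contradicting simplicity. Hence no such $v$ exists and $G$ is $2$-connected.

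The main delicate points I expect are the genuinely infinite-graph aspects and the small or degenerate cases: one must confirm that Menger's theorem and the subdivision argument do not secretly use finiteness (they do not, being local around a fixed pair of vertices), and one must fix the convention for trivial graphs, such as a single vertex or graphs with at most one edge, where both sides of the equivalence are vacuous. Everything else is a routine manipulation of paths and cycles.
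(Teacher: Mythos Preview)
The paper does not actually prove this fact; it merely cites \cite[Section 3.2]{Graphbook} and remarks that the argument there goes through verbatim for infinite graphs. Your proposal therefore supplies a self-contained proof where the paper defers to the literature, and the argument is essentially correct.

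Two small points of precision are worth noting. First, since you subdivide both $e$ and $f$ simultaneously, deleting the subdivision vertex $a$ from $G'$ does not give $G-e$ but a subdivision of $G-e$ (the edge $f$ is still subdivided by $b$); this is harmless because a subdivision of a connected graph is connected, but the sentence as written is slightly inaccurate. Second, in the converse the phrase ``an edge $e$ meeting $C_1$'' is ambiguous; the cleanest choice is to take $e=vc_1$ and $f=vc_2$ with $c_i\in C_i$, so that any simple cycle through both $e$ and $f$ uses $v$ exactly once and must therefore contain a $c_1$--$c_2$ path in $G-v$, contradicting that $v$ separates $C_1$ from $C_2$. Your remarks about Menger's theorem being local (hence valid for infinite graphs for fixed finite $k$) and about the degenerate small cases are appropriate; in the paper's applications the relevant graph is $G_{ext}(\MM)$, which by construction has no isolated vertices, so that convention is consistent with the intended use.
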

\medskip

Let $(\MM,d)$ be a metric space. Put $E_M:=\MM^2\setminus\{(x,x)\colon x\in \MM\}$. For any $e=(x,y)\in E_M$ we denote $m_e=m_{x,y}$, and further we define a weight on $E_M$ by $w(e)=d(e):=d(x,y)$ (note that then we have $m_{-e}=-m_e$). Set also
\[
\edg[\MM] := \big\{(x,y)\setsep m_{x,y} \text{ is an extreme point in }B_{\FF(\MM)^{**}}\big\}.
\]
By $\ver[\MM]$ we denote the set $\{x\in\MM\colon \exists y\in \MM: (x,y)\in \edg[\MM]\}$ and by $G_{ext}(\MM)$ we denote the weighted directed graph $(\ver[\MM],\edg[\MM])$. If the metric space $\MM$ is clear from the context we write $\edg$, $\ver$ and $G_{ext}$ instead of $\edg[\MM]$, $\ver[\MM]$ and $G_{ext}(\MM)$ respectively.

Given a metric space $(\MM,d)$ and $x,y\in\MM$ we let $[x,y]:=\{z\in\MM\colon d(x,y) = d(x,z) + d(z,y)\}$ and if $m_{x,y}$ is an extreme point in $B_{\F(\MM)^{**}}$ we say it is a \emph{preserved extreme point of $B_{\F(\MM)}$}. 

In the following we collect several known results concerning (preserved) extreme points from the literature that is relevant for this paper.
\begin{fact}\label{fact:extremePoints}Let $\MM$ be a pointed metric space. Then all the preserved extreme points of $B_{\F(\MM)}$ are normalized elementary molecules. Moreover, given distinct points $x,y\in \MM$, we have
\begin{itemize}
    \item $m_{x,y}\in \ext B_{\F(\MM)}$ if and only if $[x,y] = \{x,y\}$,
    \item $m_{x,y}\in \ext B_{\F(\MM)^{**}}$ if and only if for every $\varepsilon>0$ there exists $\delta>0$ such that for every $z\in\MM$ with $\min\{d(x,z),d(y,z)\}\geq \varepsilon$ we have $d(x,y)\leq d(x,z) + d(y,z) - \delta$.
\end{itemize}
\end{fact}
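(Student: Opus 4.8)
The plan is to treat the three assertions separately, using throughout the duality $\F(\MM)^\ast=\Lip_0(\MM)$ and the fact that $\|m_{u,v}\|=1$ for all distinct $u,v$. I would establish the two molecule-wise equivalences first and leave the structural statement — that every preserved extreme point is already an elementary molecule — for last, since it is the deepest.

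For the first equivalence the direction ``$m_{x,y}\in\ext B_{\F(\MM)}\Rightarrow[x,y]=\{x,y\}$'' is the easy one, proved by contraposition: if $z\in[x,y]$ with $z\notin\{x,y\}$, then with $\lambda=d(x,z)/d(x,y)\in(0,1)$ one checks the identity
\[ m_{x,y}=\lambda\,m_{x,z}+(1-\lambda)\,m_{z,y}, \]
exhibiting $m_{x,y}$ as a proper convex combination of the distinct unit vectors $m_{x,z},m_{z,y}$, so it is not extreme. For the converse I would argue by rigidity of optimal transport: assuming $[x,y]=\{x,y\}$ and a decomposition $m_{x,y}=\tfrac12(\mu+\nu)$ with $\|\mu\|,\|\nu\|\le1$, a $1$-Lipschitz $f$ with $f(x)-f(y)=d(x,y)$ norms both $\mu$ and $\nu$; the absence of intermediate metric points then forces any (approximately) optimal representation of $\mu,\nu$ to be supported on $\{x,y\}$, whence $\mu=\nu=m_{x,y}$.

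For the bidual (preserved) characterization I would again argue the forward direction by contraposition. If the quantitative condition fails, there are $\varepsilon_0>0$ and points $z_n$ with $\min\{d(x,z_n),d(y,z_n)\}\ge\varepsilon_0$ and $s_n:=d(x,z_n)+d(z_n,y)\to d(x,y)$. Since
\[ \frac{d(x,z_n)}{s_n}\,m_{x,z_n}\;+\;\frac{d(z_n,y)}{s_n}\,m_{z_n,y}\;=\;\frac{\chi_{\{x\}}-\chi_{\{y\}}}{s_n}\;\longrightarrow\;m_{x,y}, \]
passing to weak$^\ast$ cluster points $\xi,\eta\in B_{\F(\MM)^{\ast\ast}}$ of $m_{x,z_n}$ and $m_{z_n,y}$ and to a convergent subsequence of the coefficients yields $m_{x,y}=\lambda\xi+(1-\lambda)\eta$ with $\lambda$ bounded away from $0$ and $1$ (by $\varepsilon_0/\sup_n s_n$). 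Testing against the $1$-Lipschitz function $h(t)=\min\{d(t,x),d(t,y)\}$, which vanishes at $x,y$ but satisfies $h(z_n)\ge\varepsilon_0$, gives $h(\xi)\le-\varepsilon_0/\sup_n s_n<0=h(m_{x,y})$, so $\xi\ne m_{x,y}$ and the decomposition is genuinely nontrivial; hence $m_{x,y}$ is not preserved extreme. Conversely, the uniform gap $\delta=\delta(\varepsilon)$ supplied by the condition is precisely what bounds the diameter of thin slices $\{g:g(m_{x,y})>1-\eta\}$ and thus lets one construct a Lipschitz functional that \emph{strongly exposes} $m_{x,y}$ in $B_{\F(\MM)^{\ast\ast}}$, so it is in particular extreme there.

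The structural statement is the main obstacle, and I would deduce it from the de Leeuw--type representation of the bidual. The map $f\mapsto\big((x,y)\mapsto\tfrac{f(x)-f(y)}{d(x,y)}\big)$ is a linear isometric embedding of $\Lip_0(\MM)$ into $C_b(\widetilde\MM)$, where $\widetilde\MM=\{(x,y):x\neq y\}$; dualizing and passing to a compactification $K$ of $\widetilde\MM$ realizes $B_{\F(\MM)^{\ast\ast}}$ as a quotient of the unit ball of $M(K)$. Every extreme point of $B_{\F(\MM)^{\ast\ast}}$ then lifts to a \emph{minimal} preimage that is an extreme point of the measure ball, i.e.\ a signed point mass $\pm\delta_p$ with $p\in K$; point masses at genuine pairs $(x,y)\in\widetilde\MM$ map exactly to $\pm m_{x,y}$, while masses concentrated ``at infinity'' of $K$ do not land in $\F(\MM)$. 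Thus a preserved extreme point, belonging to $\F(\MM)$, must come from a genuine pair and is an elementary molecule. The delicate points — that extreme points lift through the quotient to point masses, and that only genuine pairs contribute to $\F(\MM)$ — are exactly where the work of Aliaga et al.\ enters, and I would import those results rather than reprove them.
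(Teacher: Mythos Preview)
The paper does not prove this Fact at all: its ``proof'' consists of three literature citations (Weaver's book for the structural statement, \cite{AG19} for the bidual characterization, \cite{APPP20} for the first equivalence). You are attempting something much more ambitious, namely to reprove those results from scratch, and the sketch has real gaps.

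The most serious one is the converse of the first bullet. The implication ``$[x,y]=\{x,y\}\Rightarrow m_{x,y}\in\ext B_{\F(\MM)}$'' was open for years and was settled only in \cite{APPP20}; your one-line argument (``the absence of intermediate metric points then forces any approximately optimal representation of $\mu,\nu$ to be supported on $\{x,y\}$'') is precisely the difficult step, not a proof of it. A norming functional $f$ for $m_{x,y}$ is also norming for $\mu$ and $\nu$, but nothing a priori prevents $\mu$ from being, say, a limit of finite sums $\sum a_i m_{u_i,v_i}$ with $f(u_i)-f(v_i)$ close to $d(u_i,v_i)$ and $(u_i,v_i)$ far from $\{x,y\}$; the hypothesis $[x,y]=\{x,y\}$ says nothing about such pairs. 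The actual proof in \cite{APPP20} goes through a delicate analysis of supports (or, in later approaches, through the structure of the set of molecules in the unit sphere) that your sketch does not capture.

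The other two parts are closer to complete but still sketchy. For the sufficiency in the second bullet, asserting that the uniform gap ``lets one construct a strongly exposing functional'' is again the content of the theorem in \cite{AG19}, not a proof; one has to actually produce such a functional and verify the strong exposure, and the standard construction (a suitable combination of $d(\cdot,x)$ and $d(\cdot,y)$) needs the $\delta(\varepsilon)$ hypothesis in a precise way. In the structural statement, the claim that an extreme point of the quotient ball lifts to an extreme point (a Dirac mass) of the measure ball is not automatic for general quotient maps and requires an argument specific to this situation; likewise ``point masses at infinity do not land in $\F(\MM)$'' needs justification. If you intend to import these steps from the literature, then you are essentially reproducing the paper's approach of citing \cite{Weaverbook}, \cite{AG19}, \cite{APPP20}, and you may as well do so directly.
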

\begin{proof}Preserved extreme points are normalized elementary molecules by \cite[Corollary 3.44]{Weaverbook}. The characterization of normalized elementary molecules which are preserved extreme points is contained in \cite[Theorem 4.1]{AG19}. Finally, the characterization of normalized elementary molecules which are extreme points is \cite[Theorem 3.2]{APPP20}.
\end{proof}
The information contained in Fact~\ref{fact:extremePoints} will be used in the sequel freely without a direct reference.

\section{Isometries on Lispchitz-free spaces generated by cyclic bijections}\label{sec:cyclicbijections}

In this section we introduce the fundamental class of (weak) Prague metric spaces and show that surjective linear isometries of their Lipschitz-free spaces are amenable to a description.
\begin{definition}
Let $(\MM,d)$ be a metric space. We say that $E\subset E_M$ is \emph{weakly admissible (for $\MM$)} if it is symmetric, $V_E:=\{x\in\MM\colon \exists y\in\MM: (x,y)\in E\}$ is dense subset of $\MM$ and the graph $G=(V_E,E)$ is connected. We say that $E$ is \emph{admissible} if moreover for every $x,y\in V_E$ we have
\begin{equation}\label{eq:pragueDistances}
    d(x,y) = \inf\Big\{\sum_{i=1}^n d(e_i)\colon e_1,\ldots,e_n \text{ is $E$-path from $x$ to }y\Big\}.
\end{equation}
\end{definition}

The connection of weakly admissible sets to isometries of Lipschitz-free spaces is explained in the following. In order to formulate our conditions in a short way we shall use the following notation.

\begin{notation}Let $(\MM_1,d_1)$ and $(\MM_2,d_2)$ be metric spaces with weakly admissible sets $E_1$ and $E_2$ and let $\sigma: E_1\to E_2$ be a bijection. In what follows we shall use the following conditions:
\begin{enumerate}[label=(S\alph*)]
    \item\label{it:wA1} $E'\subset E_1$ is a simple cycle if and only if $\sigma(E')$ is a simple cycle in $E_2$,
    \item\label{it:wA2} $\tfrac{d_1(e)}{d_2(\sigma(e))}$ is constant on each simple cycle $E'\subset E_1$,
    \item\label{it:wA3} for every $E_1$-path $e_1,\ldots,e_n$ from $x\in V_{E_1}$ to $y\in V_{E_1}$ and every $E_2$-path $f_1,\ldots,f_k$ from $x'\in V_{E_2}$ to $y'\in V_{E_2}$ we have
    \[
    d_1(x,y) = \normb{Big}{\sum_{i=1}^nd_1(e_i)m_{\sigma(e_i)}}\quad \text{and}\quad
    d_2(x',y') = \normb{Big}{\sum_{j=1}^kd_2(f_j)m_{\sigma^{-1}(f_j)}}.
    \]
\end{enumerate}
\end{notation}

\begin{proposition}\label{prop:canonicalIsometries}
Let $(\MM_1,d_1)$ and $(\MM_2,d_2)$ be metric spaces with weakly admissible sets $E_1$ and $E_2$. Let $\sigma: E_1\to E_2$ be a bijection and suppose that one of the following conditions hold
\begin{enumerate}[label=(\roman*)]
    \item\label{it:generalCond} $\sigma$ satisfies \ref{it:wA1}, \ref{it:wA2} and \ref{it:wA3},
    \item\label{it:admissibleCond} $E_1$ and $E_2$ are admissible and $\sigma$ satisfies \ref{it:wA1} and \ref{it:wA2}.
\end{enumerate}
Then there exists a surjective isometry $T:\F(\MM_1)\to \FF(\MM_2)$ such that $T(m_e)=m_{\sigma(e)}$, for every $e\in E_1$.

Moreover, \ref{it:admissibleCond} implies \ref{it:generalCond}.
\end{proposition}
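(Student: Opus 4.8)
The plan is to define a linear map $T$ on the dense subspace of $\F(\MM_1)$ spanned by the elementary molecules $\{m_e : e\in E_1\}$ by setting $T(m_e) = m_{\sigma(e)}$, verify that it is a well-defined surjective isometry onto the corresponding dense subspace of $\F(\MM_2)$, and then extend it by continuity to the completions. The density of these spans follows from the fact that $V_{E_1}$ (resp.\ $V_{E_2}$) is dense in $\MM_1$ (resp.\ $\MM_2$) and the graph $(V_{E_1},E_1)$ is connected: every $\delta(x)-\delta(y)$ for $x,y\in V_{E_1}$ is a telescoping sum of molecules $d_1(e_i)m_{e_i}$ along an $E_1$-path, and such differences are dense in $\F(\MM_1)$.

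First I would establish the last sentence, that \ref{it:admissibleCond} implies \ref{it:generalCond}; since \ref{it:wA1} and \ref{it:wA2} are shared, the only content is deriving \ref{it:wA3} from admissibility together with \ref{it:wA1} and \ref{it:wA2}. Admissibility gives $d_1(x,y) = \inf\{\sum_i d_1(e_i)\}$ over $E_1$-paths, and I would show this equals $\normb{Big}{\sum_i d_1(e_i)m_{\sigma(e_i)}}$ for any fixed path. The upper bound is the triangle inequality, $\normb{Big}{\sum_i d_1(e_i)m_{\sigma(e_i)}} \le \sum_i d_1(e_i)$ using \ref{it:wA2} to relate $d_1(e_i)$ to $d_2(\sigma(e_i))$ on cycles; the lower bound requires that the norm of the image not drop below $d_1(x,y)$, which is where \ref{it:wA1} and \ref{it:wA2} ensure that the combinatorial cycle structure (hence the relevant Lipschitz constraints) is transported faithfully by $\sigma$. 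Once \ref{it:wA3} holds it suffices to prove the proposition under hypothesis \ref{it:generalCond}.

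The crux is well-definedness and isometry under \ref{it:generalCond}. To see $T$ is well-defined, suppose $\sum_j c_j m_{e_j} = 0$ in $\F(\MM_1)$; I must show $\sum_j c_j m_{\sigma(e_j)} = 0$ in $\F(\MM_2)$. The kernel of the molecule relations is generated by the cycle relations $\sum_{i} \varepsilon_i d_1(e_i) m_{e_i} = 0$ over simple cycles, and condition \ref{it:wA1} says $\sigma$ sends simple cycles to simple cycles while \ref{it:wA2} controls the proportionality of weights, so these relations are preserved. The isometry property then reduces, by density and the universal telescoping description above, precisely to condition \ref{it:wA3}: the left identity gives $\|T\xi\|_2 \ge \|\xi\|_1$ computed on path-differences, and the right identity applied to $\sigma^{-1}$ gives the reverse inequality, so $T$ preserves norms of the dense set of path-differences and extends to a surjective linear isometry.

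\textbf{Main obstacle.} The hard part will be the lower-bound direction in deriving \ref{it:wA3} from admissibility: the infimum in \eqref{eq:pragueDistances} ranges over \emph{all} $E_1$-paths from $x$ to $y$, whereas the norm $\normb{Big}{\sum_i d_1(e_i) m_{\sigma(e_i)}}$ is computed from one fixed path, so I must argue that no $1$-Lipschitz test function on $\MM_2$ can exploit the image configuration to certify a smaller norm than $d_1(x,y)$ — this is exactly where the faithful transport of the cycle structure by \ref{it:wA1} and \ref{it:wA2} is essential, and controlling it in the infinite, merely-dense setting (rather than for finite graphs) is the delicate point.
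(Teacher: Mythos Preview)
Your overall plan --- span density via connectivity, well-definedness of $T$ via cycle relations transported by \ref{it:wA1} and \ref{it:wA2}, then extend by density --- matches the paper. The gap is exactly where you flag it: the order in which you treat the two hypotheses is backwards, and the ``main obstacle'' you identify is not something you should try to attack directly.

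You propose to first prove \ref{it:admissibleCond}$\Rightarrow$\ref{it:generalCond} by establishing the lower bound $\normb{big}{\sum_i d_1(e_i)m_{\sigma(e_i)}}\geq d_1(x,y)$ from admissibility, and then reduce everything to case \ref{it:generalCond}. The paper does the opposite: it proves the existence of the isometry $T$ under \emph{either} hypothesis using only the \emph{upper} bound $\|\phi(x,y)\|\leq d_1(x,y)$ (trivial under \ref{it:wA3}, and under admissibility you just approximate by a near-optimal path and use the triangle inequality), and then reads off \ref{it:wA3} \emph{a posteriori} from the fact that $T$ is an isometry. So \ref{it:admissibleCond}$\Rightarrow$\ref{it:generalCond} is a corollary, not a lemma.

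The mechanism that makes the upper bound alone suffice --- and which is also missing from your case-\ref{it:generalCond} sketch --- is the universal property of the free space. Knowing that $T$ is isometric on path-differences $\delta(x)-\delta(y)$ does \emph{not} by itself make $T$ isometric on their linear span. What the paper does is observe that $f(x):=\phi(x,0_\MM)$ is a $1$-Lipschitz map $V_{E_1}\to\F(V_{E_2})$ (this is exactly the upper bound), so its canonical linearization $T_f$ has $\|T_f\|\leq 1$, and $T_f$ agrees with your $T_0$ on $X_1$. Running the same argument with $\sigma^{-1}$ gives a contraction $T_g$ with $T_g|_{X_2}=T_0^{-1}$. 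Two mutually inverse contractions between dense subspaces are isometries, and you are done --- without ever needing the lower bound directly.
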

\begin{proof}Assume that $\sigma$ satisfies \ref{it:wA1} and \ref{it:wA2}. First, we notice that the following holds
\begin{equation}\label{eq:simpleCycleImplication}
\text{$C\subset E_1$ is a cycle} \Rightarrow \sum_{e\in C}d_1(e)m_{\sigma(e)} = 0.
\end{equation}

Indeed, first consider the case when $C$ is a simple cycle. By \ref{it:wA2} there exists $K>0$ with $\tfrac{d_1(e)}{d_2(\sigma(e))}=K$ for $e\in C$ and so 
\[
\sum_{e\in C}d_1(e)m_{\sigma(e)} = K \sum_{e\in C}d_2(\sigma(e))m_{\sigma(e)} = K \sum_{e\in C} \big(\delta_{s(\sigma(e))} - \delta_{r(\sigma(e))}\big) = 0,
\]
where in the last equality we used that $\sigma(C)$ is a simple cycle by \ref{it:wA1}. Thus, \eqref{eq:simpleCycleImplication} holds whenever $C$ is a simple cycle. Now, given a cycle $C=(e_1,\ldots,e_n)$ we inductively find simple cycles $C_1,\ldots,C_k$ such that there is a permutation $\pi\in S_n$ and $i_1<\ldots<i_k\leq k$ with $C_j = (e_{\pi(i_j + 1)},\ldots,e_{\pi(i_{j+1})})$ for $j=1,\dots,k$: first we pick the first index $l$ such that there is $j<l$ with $r(e_j)=r(e_l)$ then put $C_1 := (e_{j+1},\ldots,e_l)$, and next we pick the cycle $(e_1,\ldots,e_j,e_{l+1},\ldots,e_n)$ and proceed inductively. Having simple cycles $C_1,\ldots,C_K$ as described above, we have 
\[
\sum_{e\in C}d_1(e)m_{\sigma(e)} = \sum_{i=1}^k \Big(\sum_{e\in C_i}d_1(e)m_{\sigma(e)}\Big) = 0
\]
where the last equality follows from the already proven part. Thus, we proved that \eqref{eq:simpleCycleImplication} holds for any cycle $C\subset E_1$.

Now, given $x,y\in V_{E_1}$ and any directed edge path $e_1,\ldots,e_n\in E_1$ from $x$ to $y$ we let
\[
\phi(x,y):=\sum_{i=1}^n d_1(e_i)m_{\sigma(e_i)}.
\]
By \eqref{eq:simpleCycleImplication}, this formula does not depend on the choice of the edge path because if $f_1,\ldots,f_k\in E_1$ is another edge path from $x$ to $y$, then  $e_1,\ldots,e_n,-f_m,\ldots,-f_1$ is a directed cycle and so we have 
\[\big(\sum_{i=1}^n d_1(e_i)m_{\sigma(e_i)}\big)-\big(\sum_{j=1}^m d_1(f_j)m_{\sigma(f_j)}\big) \stackrel{\eqref{eq:simpleCycleImplication}}{=} 0.\]

Now, we note that both \ref{it:admissibleCond} and \ref{it:wA3} imply that $\|\phi(x,y)\|\leq d_1(x,y)$ for any $x,y\in V_{E_1}$. Indeed, assuming \ref{it:wA3} we have even equality, so it suffices to check it under the assumptions that \ref{it:admissibleCond} holds. Pick $x\neq y\in V_{E_1}$. For every $\varepsilon>0$ there exists a directed edge path $e_1,\ldots,e_n\in E_1$ from $x$ to $y$ such that $d_1(x,y)\geq \big(\sum_{i=1}^n d_1(e_i)\big)-\varepsilon$. But then we obtain
\[\|\phi(x,y)\| = \normb{Big}{\sum_{i=1}^n d_1(e_i)m_{\sigma(e_i)}}\leq \sum_{i=1}^n d_1(e_i)\norm{m_{\sigma(e_i)}}=\sum_{i=1}^n d_1(e_i)\leq d_1(x,y)+\varepsilon\]
and since $\varepsilon>0$ was arbitrary, we obtain $\|\phi(x,y)\|\leq d_1(x,y)$.

Now, for $i\in \{1,2\}$ set $X_i:=\Span{\{m_e\colon e\in E_i\}}$. Since $E_i$ are weakly admissible, $X_i$ is a dense subspace of $\FF(\MM_i)$ for $i\in\{1,2\}$. We define $T_0:X_1\to X_2$ by setting \[T_0(m_e):=m_{\sigma(e)},\quad e\in E_1\] and extending it linearly. We shall verify that $T_0$ is a well-defined surjective linear isometry and thus it has a unique extension to a surjective linear isometry $T:\FF(\MM_1)\to\FF(\MM_2)$. 

Pick arbitrary base points $0_\MM\in V_{E_1}$ and $0_\NN\in V_{E_2}$. We claim there is a $1$-Lipschitz mapping $f:V_{E_1}\to \FF(V_{E_2})$ with $f(0_\MM) = 0$ such that $T_f|_{X_1} = T_0$, where $T_f: \F(V_{E_1})\to\F(V_{E_2})$ is the unique linear operator obtained from $f$ by the universal property of $\F(V_{E_1})$. Set $f(0_\MM):=0$. For $0_\MM\neq x\in V_{E_1}$ we set $f(x):=\phi(x,0_\MM)$. In order to check that $f$ is $1$-Lipschitz, pick $x\neq y\in V_{E_1}$ and a directed edge path $e_1,\ldots,e_n\in E_1$ from $x$ to $y$. Notice that $\delta_x-\delta_y=\sum_{i=1}^n d_1(e_i)m_{e_i}$, so
\[\begin{split}\|f(x)-f(y)\| & =\|T_0\big(\sum_{i=1}^n d_1(e_i)m_{e_i}\big)\|=\|\sum_{i=1}^n d_1(e_i)T_0(m_{e_i})\| = \|\phi(x,y)\|\leq d_1(x,y).
\end{split}\]
Thus, $f$ is $1$-Lipschitz. Finally, it is obvious from the way how we defined $f$ that $f(x) = T_0(\delta(x)-\delta(0_\MM))$ for $0_\MM\neq x\in V_{E_1}$, which implies that $T_{f}|_{X_1} = T_0$. This finishes the proof of the claim above.

Similarly, applying the above to $(T_0)^{-1}$, we obtain a $1$-Lipschitz mapping $g:V_{E_2}\to \FF(V_{E_1})$ with $g(0_\NN) = 0$ such that $T_g|_{X_2} = (T_0)^{-1}$, where $T_g:\F(V_{E_1})\to\F(V_{E_1})$ is obtained from $g$ by the universal property of $\F(V_{E_2})$. But then we observe that $T_f = (T_g)^{-1}$, so $T_f$ is linear isometry with inverse $T_g$ and since $\sigma$ is surjective, we finally deduce that $T_0 = T_f|_{X_1}$ is surjective linear isometry.

Notice that this implies that in fact for every $x,y\in V_{E_1}$ we have the equality $\phi(x,y)=d_1(x,y)$. Since, as we have shown above, $\phi(x,y)$ does not depend on the choice of the edge path between $x$ and $y$ (and noticing that similar arguments apply of course for edge paths between points from $V_{E_2}$) we conclude that \ref{it:admissibleCond} implies \ref{it:generalCond}, proving the `Moreover' part of the statement.
\end{proof}

Next we aim for a kind of converse to Proposition~\ref{prop:canonicalIsometries}. Notice that such a potential converse might have a chance to work only for isometries $T:\F(\MM)\to \F(\NN)$, where $\MM$ and $\NN$ are metric spaces with (weakly) admissible sets $E_1$ and $E_2$ respectively, only if $T$ maps $E_1$ to $E_2$. For this reason we need to work with subsets of $E_\MM$, resp. $E_\NN$ that are preserved by isometries. This is the content of Proposition~\ref{prop:bijectionEext} where we do not yet require that these subsets are (weakly) admissible.

In the proof we need the following variant of \cite[Lemma 2.5]{AFGZ}, the proof is analogical so we omit it.

\begin{lemma}\label{lem:linIndependent} Let $\MM$ be a metric space and $E'\subset E_M$ be a finite set such that for any $e\in E'$ it is not true that $-e\in E'$. Then $\{m_e\setsep e\in E'\}\subset \FF(\MM)$ is not linearly independent if and only if $E'$ contains an unoriented cycle.
\end{lemma}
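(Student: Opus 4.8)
The plan is to prove the two implications separately, treating $E'$ as the edge set of an undirected graph. This is legitimate: the hypothesis that $-e\notin E'$ whenever $e\in E'$ means $E'$ contains no pair of opposite edges, so it corresponds bijectively to an undirected graph on the finite vertex set $V'$ of points incident to $E'$. Throughout I will use the identity $d(e)\,m_e=\delta_{s(e)}-\delta_{r(e)}$, where $\delta_v:=\chi_{\{v\}}$, together with $m_{-e}=-m_e$ and the symmetry $w(-e)=w(e)$. I will also use that $\F_0(\MM)$ is by definition a subspace of the space of finitely supported functions $\MM\to\R$, so that a vanishing linear combination of molecules must vanish coefficientwise at every point of $\MM$; no boundedness of point evaluations is needed.

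\textbf{Cycle $\Rightarrow$ dependence.}
Suppose $E'$ contains an unoriented cycle $(e_1,\dots,e_n)$ of distinct edges, and choose signs $\varepsilon_i\in\{-1,1\}$ so that $(\varepsilon_1 e_1,\dots,\varepsilon_n e_n)$ is an oriented cycle, i.e. $r(\varepsilon_i e_i)=s(\varepsilon_{i+1}e_{i+1})$ cyclically. Telescoping gives $\sum_{i=1}^n\big(\delta_{s(\varepsilon_i e_i)}-\delta_{r(\varepsilon_i e_i)}\big)=0$. Since $d(\varepsilon_i e_i)=d(e_i)$ and $m_{\varepsilon_i e_i}=\varepsilon_i m_{e_i}$, substituting $d(e)\,m_e=\delta_{s(e)}-\delta_{r(e)}$ yields $\sum_{i=1}^n \varepsilon_i d(e_i)\,m_{e_i}=0$. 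As each coefficient $\varepsilon_i d(e_i)$ is nonzero and the $e_i$ are distinct elements of $E'$, this is a nontrivial linear relation, so $\{m_e:e\in E'\}$ is linearly dependent.

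\textbf{Dependence $\Rightarrow$ cycle.}
I argue by contrapositive: if $E'$ contains no unoriented cycle, then the undirected graph $(V',E')$ is a forest, and I show $\{m_e:e\in E'\}$ is linearly independent by induction on $\abs{E'}$. The base case $E'=\emptyset$ is trivial. For the inductive step, a nonempty finite forest has a vertex $v\in V'$ of degree one, incident to exactly one edge $e_0\in E'$. Given any relation $\sum_{e\in E'}c_e m_e=0$, read off the coefficient of $\delta_v$: since $v$ is incident only to $e_0$, that coefficient equals $\pm c_{e_0}/d(e_0)$, forcing $c_{e_0}=0$. Deleting $e_0$ leaves a relation over the forest $E'\setminus\{e_0\}$, and the inductive hypothesis gives $c_e=0$ for all remaining $e$; hence all coefficients vanish.

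\textbf{Main obstacle.}
I expect the only delicate point to be bookkeeping rather than anything conceptual: one must keep the signs $\varepsilon_i$ and the weight symmetry consistent in the telescoping step, and ensure that extracting the $\delta_v$-coefficient in the forest argument is justified. The latter is immediate once one recalls that $\F_0(\MM)$ embeds into the space of finitely supported functions on $\MM$. The argument is thus purely combinatorial and linear-algebraic, paralleling the proof of \cite[Lemma 2.5]{AFGZ}.
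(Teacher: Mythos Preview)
Your proof is correct and follows precisely the standard argument that the paper defers to (the paper omits the proof entirely, citing \cite[Lemma 2.5]{AFGZ} as analogous). Both directions are handled cleanly: the telescoping relation along an oriented cycle for one implication, and the leaf-stripping induction on a forest for the contrapositive of the other, are exactly the expected steps.
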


\begin{proposition}\label{prop:bijectionEext}
Let $(\MM,d_\MM)$ and $(\NN,d_\NN)$ be metric spaces and $T:\F(\MM)\to \F(\NN)$ be a surjective isometry. Then there exists a symmetric bijection $\sigma:\edg[\MM]\to \edg[\NN]$, i.e. $\sigma(-e)=-\sigma(e)$ for $e\in \edg[\MM]$,
such that
\begin{itemize}
    \item $T(m_e) = m_{\sigma(e)}$, $e\in \edg[\MM]$,
    \item $\sigma$ satisfies \ref{it:wA1}, \ref{it:wA2} and \ref{it:wA3} with $E_1 = \edg[\MM]$ and $E_2 = \edg[\NN]$.
\end{itemize}
\end{proposition}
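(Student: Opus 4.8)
The plan is to extract the bijection $\sigma$ from the fact that a surjective linear isometry $T$ must permute the preserved extreme points of the unit balls. Since $T:\F(\MM)\to\F(\NN)$ is a surjective linear isometry, its bidual $T^{**}$ is a surjective linear isometry of $\F(\MM)^{**}$ onto $\F(\NN)^{**}$ which is weak*-to-weak* continuous, hence maps $\ext B_{\F(\MM)^{**}}$ bijectively onto $\ext B_{\F(\NN)^{**}}$. By Fact~\ref{fact:extremePoints}, these extreme points are exactly the normalized elementary molecules $m_e$ for $e\in\edg$. Therefore for each $e\in\edg[\MM]$ there is a unique $\sigma(e)\in\edg[\NN]$ with $T(m_e)=m_{\sigma(e)}$, and this $\sigma$ is a bijection. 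Symmetry $\sigma(-e)=-\sigma(e)$ is then immediate from $m_{-e}=-m_e$ and linearity of $T$: $m_{\sigma(-e)}=T(m_{-e})=-T(m_e)=-m_{\sigma(e)}=m_{-\sigma(e)}$, and since the assignment $e\mapsto m_e$ is injective on $\edg[\NN]$, we get $\sigma(-e)=-\sigma(e)$.

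Next I would verify the three conditions. For \ref{it:wA3}, given any $\edg[\MM]$-path $e_1,\dots,e_n$ from $x$ to $y$, I use that $\delta_x-\delta_y=\sum_{i=1}^n d_\MM(e_i)m_{e_i}$, so applying $T$ and taking norms yields
\[
d_\MM(x,y)=\norm{\delta_x-\delta_y}=\normb{Big}{\sum_{i=1}^n d_\MM(e_i)T(m_{e_i})}=\normb{Big}{\sum_{i=1}^n d_\MM(e_i)m_{\sigma(e_i)}},
\]
using that $T$ is an isometry; the symmetric statement for $\sigma^{-1}$ follows by applying the same argument to $T^{-1}$. For \ref{it:wA1}, a cycle $E'$ in $\edg[\MM]$ is characterized (via Lemma~\ref{lem:linIndependent}) by a linear dependence among the $\{m_e\}$; since $T$ is a linear isomorphism it preserves linear dependencies, so $\{m_{\sigma(e)}\colon e\in E'\}$ carries the corresponding dependence and $\sigma(E')$ is a cycle as well. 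The delicate point here is passing between \emph{cycles} and \emph{simple} cycles and ensuring the correspondence respects simplicity; I expect to argue that $\sigma$ and $\sigma^{-1}$ both send cycles to cycles, and that a cycle is simple precisely when it is minimal (contains no proper subcycle), a property visibly preserved by a bijection that preserves the cycle relation in both directions.

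For \ref{it:wA2}, I would fix a simple cycle $E'=\{e_1,\dots,e_k\}$ in $\edg[\MM]$, so that $\sum_{i=1}^k d_\MM(e_i)m_{e_i}=0$ after suitably orienting the edges into an oriented cycle. Applying $T$ gives $\sum_{i=1}^k d_\MM(e_i)m_{\sigma(e_i)}=0$. On the other hand $\sigma(E')$ is itself a simple cycle in $\edg[\NN]$, so $\sum_{i=1}^k d_\NN(\sigma(e_i))m_{\sigma(e_i)}=0$. The key is that for a simple oriented cycle the molecules $\{m_{\sigma(e_i)}\}$ satisfy, up to the single cyclic relation, no other dependence, so the two vanishing combinations must be proportional coordinatewise; comparing coefficients forces $d_\MM(e_i)/d_\NN(\sigma(e_i))$ to be the same constant for all $i$, which is exactly \ref{it:wA2}.

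I expect the main obstacle to be the careful handling of simplicity in \ref{it:wA1} together with the coefficient-comparison in \ref{it:wA2}: one must know that for a simple cycle the only linear relation among the corresponding molecules is the single obvious one (so that the proportionality argument is valid), and that simplicity is genuinely transported by $\sigma$ in both directions. Both of these rest on Lemma~\ref{lem:linIndependent}, which ties cycles to the failure of linear independence; the subtlety is that Lemma~\ref{lem:linIndependent} is stated for finite edge sets and for orientation classes (at most one of $e,-e$ present), so I would reduce to finite subgraphs and choose orientations consistently before invoking it.
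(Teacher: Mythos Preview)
Your approach is essentially the paper's: extract $\sigma$ from the action of $T$ on preserved extreme points, get \ref{it:wA3} by applying the isometry to a telescoping sum, and handle \ref{it:wA1}--\ref{it:wA2} via Lemma~\ref{lem:linIndependent} and a coefficient comparison. One logical ordering issue deserves attention, though.

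Your argument for \ref{it:wA1} (``minimal linearly dependent set $\Leftrightarrow$ simple cycle'') only yields that $\sigma$ sends simple \emph{unoriented} cycles to simple unoriented cycles: Lemma~\ref{lem:linIndependent} is blind to orientation. But \ref{it:wA1} is a statement about \emph{directed} simple cycles, and in your \ref{it:wA2} step you write $\sum_i d_\NN(\sigma(e_i))m_{\sigma(e_i)}=0$, which presupposes that $(\sigma(e_1),\dots,\sigma(e_k))$ is already directed and in cyclic order. A priori you only know that, after a permutation $\pi$ and signs $\varepsilon_i\in\{\pm1\}$, the sequence $(\varepsilon_1\sigma(e_{\pi(1)}),\dots,\varepsilon_k\sigma(e_{\pi(k)}))$ is a directed cycle. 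The paper resolves this by doing \ref{it:wA1} and \ref{it:wA2} in a single stroke: from $\sum_i d_\MM(e_i)m_{\sigma(e_i)}=0$ and the one-dimensionality of the relation space for a simple unoriented cycle (exactly your ``only linear relation is the obvious one''), one gets $d_\MM(e_i)=c\,\varepsilon_i\,d_\NN(\sigma(e_i))$ for a single constant $c$; positivity of the distances forces all $\varepsilon_i$ to share a sign, which simultaneously gives the constant ratio \emph{and} the directedness of $\sigma(E')$. So your \ref{it:wA2} computation is really where the directed version of \ref{it:wA1} comes from, not the other way around; once you reorder the argument accordingly, it matches the paper's proof.
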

\begin{proof}Since $T$ is a linear isometry, it maps preserved extreme points to preserved extreme points. Since the only preserved extreme points are normalized elementary molecules, we obtain that for every $e\in \edg[\MM]$ there exists $\sigma(e)\in \edg[\NN]$ such that $T(m_e)= m_{\sigma(e)}$. Since $T$ is a bijection, we easily obtain that the mapping $\sigma:\edg[\MM]\to \edg[\NN]$ is a bijection as well. Further, note that for any $e\in \edg[\MM]$ we have $\sigma(-e)=-\sigma(e)$ which follows easily from the fact that $T(m_{-e}) = -T^*(m_e)$.

Let us pick a simple cycle $(e_1,\ldots,e_n)$ from $\edg[\MM]$. In the case that $n=2$, we have $\{e_1,e_2\} = \{e_1,-e_1\}$ and so $\{\sigma(e_1),\sigma(e_2)\} = \{\sigma(e_1),-\sigma(e_1)\}$. Consider now the case when $n\geq 3$. Then $\sigma(e_i)\neq -\sigma(e_j)$ for $i< j\leq n-1$ and
\[
\sum_{i=1}^{n-1} d(e_i)m_{\sigma(e_i)} = \big(\sum_{i=1}^{n-1} d(e_i)m_{e_i}\big) = T^*(0) = 0,
\]
so, by  Lemma~\ref{lem:linIndependent}, $\{\sigma(e_i)\setsep i\leq n-1\}$ contains an unoriented cycle, that is, there are $m\leq n$, $\varepsilon\in\{\pm 1\}^{m-1}$ and one-to-one mapping $\pi:\{1,\ldots,m-1\}\to\{1,\ldots,n-1\}$ such that $\{\varepsilon(i)\sigma(e_{\pi(i)})\setsep i\leq m-1\}$ is a cycle. Applying the above to $T^{-1}$ we obtain that $\{e_{\pi(1)},\ldots,e_{\pi(m-1)}\}$ contains an unoriented cycle and so $m=n$ and $\pi\in S(n-1)$ is a permutation. After using one more permutation we may without loss of generality assume that the source $f_i:=s\big(\varepsilon(i)\sigma(e_{\pi(i)})\big)$ is equal to the range of $\varepsilon(i-1)\sigma(e_{\pi(i-1)})$ for $i\leq n-1$ (where $\varepsilon(0):=\varepsilon(n-1)$ and $\pi(0):=\pi(n-1)$). Thus, we have

\[\begin{split}
0 & =\sum_{i=1}^{n-1} d(e_i)m_{\sigma(e_i)}=\sum_{i=1}^{n-1} d(e_{\pi(i)})m_{\sigma(e_{\pi(i)})} = \sum_{i=1}^{n-1} d(e_{\pi(i)})\varepsilon(i)m_{\varepsilon(i)\sigma(e_{\pi(i)})}\\
& = \sum_{i=1}^{n-1} \frac{d(e_{\pi(i)})}{d(\sigma(e_{\pi(i)}))}\varepsilon(i)\big(\delta_{f_i}-\delta_{f_{i+1}}\big)
\end{split}\]
and so, comparing the coefficients we obtain that $\tfrac{d(e_{\pi(i)})}{d(\sigma(e_{\pi(i)}))}\varepsilon(i)$, $i\leq n-1$ is constant.

Thus, $\varepsilon$ is constant and so, using the fact that $E'\subset E_{ext}$ is a cycle in $G_{ext}$ if and only if $-E'$ is a cycle in $G_{ext}$, we obtain that $\{\sigma(e_i)\setsep i\leq n-1\}$ is a simple cycle. Thus, $\sigma(E')$ is a simple cycle whenever $E'\subset E_{ext}$ is a simple cycle. Moreover, $\frac{d(e)}{d(\sigma(e))}$ is constant on each simple cycle $E'\subset E_{ext}$. This proves that $\sigma$ satisfies both \ref{it:wA1} and \ref{it:wA2}.

In order to check that \ref{it:wA3} holds, pick $\edg[\MM]$-path $e_1,\ldots,e_n$ from $x\in\MM$ to $y\in\MM$ and notice that then $\delta_\MM(x) - \delta_\NN(y) = \sum_{i=1}^n d_\MM(e_i)m_{e_i}$. Thus, since $T$ is isometry we obtain
\[
d_\MM(x,y) = \normb{Big}{\sum_{i=1}^n d_\MM(e_i)m_{e_i}} = \normb{Big}{\sum_{i=1}^n d_\MM(e_i)T(m_{e_i})} = \normb{Big}{\sum_{i=1}^n d_\MM(e_i)m_{\sigma(e_i)}}.
\]
Finally, for $\edg[\NN]$-path $f_1,\ldots,f_k$ from $x'\in\NN$ to $y'\in\NN$, using that $T^{-1}$ is isometry we similarly obtain that $d_\NN(x',y') = \normb{Big}{\sum_{i=1}^k d_\NN(f_j)m_{\sigma^{-1}(f_j)}}$.
\end{proof}

\begin{definition}
A metric space $\MM$ is called a \emph{Prague space} if $\edg[\MM]$ is admissible and a \emph{weak Prague space} if $\edg[\MM]$ is weakly admissible.\footnote{The name `Prague' was given to these spaces by the Budapest coauthor of this paper during his visit in Prague in order to simplify the wording.}
\end{definition}

Let us mention two important classes of Prague spaces. The first one is the following class from \cite{Weaverbook}.
\begin{definition}{\cite[Definition 3.3]{Weaverbook}}
A metric space $\MM$ is \emph{uniformly concave} if for every $x\neq y\in\MM$ and $\varepsilon>0$ there exists $\delta>0$ such that $d(x,z)+d(z,y)-\delta>d(x,y)$ for all $z\in\MM$ satisfying $\min\{d(x,z),d(y,z)\}\geq\varepsilon$.    
\end{definition}
Notice that it follows immediately from Fact~\ref{fact:extremePoints} that equivalently $\MM$ is uniformly concave if and only if $\edg[\MM]=\{(x,y)\colon x\neq y\in\MM\}$; in particular, such spaces are Prague.

The second class is the class of connected undirected graphs with the graph metric. We need the following simple lemma that immediately implies that every such a graph viewed as a metric space is a Prague space. We recall from the preliminaries that each undirected graph corresponds to a directed graph where each edge has its inverse. We may thus state the next lemma for such graphs.
\begin{lemma}\label{lem:Eext=E}
Let $(V,E)$ be a connected directed graph as in the paragraph above. Then $\edg=E$. In particular, $(V,E)$ with the graph metric is a Prague space.
\end{lemma}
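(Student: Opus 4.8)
The plan is to prove the two inclusions $\edg \subseteq E$ and $E \subseteq \edg$ separately, exploiting two features of the graph metric $d$: it is integer-valued, and $d(x,y)=1$ holds precisely when $(x,y)\in E$. Throughout I will use the characterizations collected in Fact~\ref{fact:extremePoints}, and the elementary observation that every preserved extreme point of $B_{\F(\MM)}$ (i.e.\ every extreme point of $B_{\F(\MM)^{**}}$ lying in $\F(\MM)$) is in particular an extreme point of $B_{\F(\MM)}$.

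For the inclusion $\edg \subseteq E$, I would start from $(x,y)\in\edg$, so $m_{x,y}\in\ext B_{\F(\MM)^{**}}$ is a preserved extreme point and hence $m_{x,y}\in\ext B_{\F(\MM)}$. By the first bullet of Fact~\ref{fact:extremePoints} this gives $[x,y]=\{x,y\}$. It then remains to see that $[x,y]=\{x,y\}$ forces $d(x,y)=1$: if instead $d(x,y)=k\geq 2$, choose a shortest edge path $x=v_0,v_1,\dots,v_k=y$; the vertex $v_1$ satisfies $d(x,v_1)=1$ and $d(v_1,y)=k-1$, so $d(x,v_1)+d(v_1,y)=d(x,y)$ with $v_1\notin\{x,y\}$, contradicting $[x,y]=\{x,y\}$. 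Thus $d(x,y)=1$, i.e.\ $(x,y)\in E$.

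For the reverse inclusion $E\subseteq\edg$, I would fix $(x,y)\in E$, so $d(x,y)=1$, and verify the preserved-extreme criterion (second bullet of Fact~\ref{fact:extremePoints}) directly. Given any $\varepsilon>0$, I claim $\delta=1$ works: if $z\in\MM$ satisfies $\min\{d(x,z),d(y,z)\}\geq\varepsilon>0$, then $z\neq x$ and $z\neq y$, so integrality of the metric yields $d(x,z)\geq 1$ and $d(y,z)\geq 1$, whence $d(x,z)+d(y,z)\geq 2 = d(x,y)+\delta$. This verifies the criterion, so $m_{x,y}\in\ext B_{\F(\MM)^{**}}$ and $(x,y)\in\edg$. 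Combined with the previous paragraph this gives $\edg=E$.

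Finally, to conclude that $(V,E)$ is a Prague space I would check that $E=\edg$ is admissible. Symmetry of $E$ and connectedness of $(V,E)$ are hypotheses, and $V_E=V=\MM$ is (trivially dense) as soon as every vertex has a neighbour, which holds for any connected graph with at least two vertices (the one-point graph being a degenerate case). For the distance condition~\eqref{eq:pragueDistances}, note every edge has weight $d(e)=1$, so for an $E$-path $e_1,\dots,e_n$ the sum $\sum_{i=1}^n d(e_i)=n$ equals its length; the infimum of these lengths over all $E$-paths from $x$ to $y$ is by definition the graph distance $d(x,y)$, so~\eqref{eq:pragueDistances} holds, in fact with the infimum attained. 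I do not anticipate a serious obstacle: the argument is routine, and the only points that genuinely require care are the use of integrality of the graph metric in the equivalence ``$[x,y]=\{x,y\}\iff d(x,y)=1$'' and not overlooking the degenerate single-vertex graph when asserting density of $V_E$.
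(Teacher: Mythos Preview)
Your proof is correct and follows essentially the same approach as the paper: both directions use the characterizations from Fact~\ref{fact:extremePoints}, the inclusion $\edg\subseteq E$ goes through $[x,y]=\{x,y\}\iff (x,y)\in E$, and the inclusion $E\subseteq\edg$ is verified with $\delta=1$ via integrality of the graph metric. Your write-up is in fact more explicit than the paper's, which simply asserts the equivalence $[x,y]=\{x,y\}\iff (x,y)\in E$ and leaves the ``In particular'' clause about being a Prague space to the reader.
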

\begin{proof}
    Given $x,y\in V$ we have that $[x,y] = \{x,y\}$ if and only if $(x,y)\in E$. Thus, $m_{x,y}\in \ext B_{\F(V)}$ only if $(x,y)\in E$, so we have $\edg\subset E$. Conversely, pick $(x,y)\in E$ and $\varepsilon>0$. Then for $\delta:=1$ we obtain that given $z\in V$ with $\min\{d(z,x),d(z,y)\}\geq \varepsilon$ we have
\[
d(x,z) + d(z,y) - d(x,y) \geq 2 - d(x,y) = \delta,
\]
so $(x,y)\in \edg$. Thus, $\edg = E$.
\end{proof}

We are now ready to state the main result of this section which directly follows from Propositions~\ref{prop:canonicalIsometries} and~\ref{prop:bijectionEext}.
\begin{theorem}\label{thm:1-1correspondencePrague}
Let $\MM$ and $\NN$ be weak Prague spaces.
\begin{enumerate}[label=(\roman*)]
    \item\label{it:weakPragueIsometries} There is one-to-one correspondence between surjective linear isometries between $\F(\MM)$ and $\F(\NN)$, and bijections between $\edg[\MM]$ and $\edg[\NN]$ satisfying  \ref{it:wA1}, \ref{it:wA2} and \ref{it:wA3}.
    \item\label{it:pragueIsometries} If both $\MM$ and $\NN$ are moreover Prague then there is a one-to-one correspondence between surjective linear isometries between $\F(\MM)$ and $\F(\NN)$, and bijections between $\edg[\MM]$ and $\edg[\NN]$  \ref{it:wA1} and \ref{it:wA2}.
\end{enumerate}
In both cases \ref{it:weakPragueIsometries} and \ref{it:pragueIsometries}, the correspondence is of the form $\sigma\to T_\sigma$, where $\sigma:\edg[\MM]\to\edg[\NN]$ is an appropriate bijection and $T_\sigma:\F(\MM)\to\F(\NN)$ is a linear isometry uniquely determined by the condition $T_\sigma(m_e)=m_{\sigma(e)}$ for all $e\in\edg[\MM]$.
\end{theorem}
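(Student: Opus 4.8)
The plan is to obtain the two directions of the claimed correspondence directly from Propositions~\ref{prop:canonicalIsometries} and~\ref{prop:bijectionEext} and then to check that the two resulting assignments are mutually inverse. Throughout, the hypothesis that $\MM$ and $\NN$ are weak Prague spaces enters only to guarantee that $\edg[\MM]$ and $\edg[\NN]$ are weakly admissible, which is exactly what both propositions require.

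For the direction from bijections to isometries, given a bijection $\sigma:\edg[\MM]\to\edg[\NN]$ satisfying \ref{it:wA1}, \ref{it:wA2} and \ref{it:wA3}, weak admissibility places us under hypothesis \ref{it:generalCond} of Proposition~\ref{prop:canonicalIsometries}, producing a surjective linear isometry with $T(m_e)=m_{\sigma(e)}$ for all $e\in\edg[\MM]$. First I would observe that this isometry is unique: since $\edg[\MM]$ is weakly admissible, $\Span\{m_e\setsep e\in\edg[\MM]\}$ is dense in $\F(\MM)$, so any two surjective linear isometries prescribing the same images $m_{\sigma(e)}$ agree on a dense set and hence coincide. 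This legitimizes writing $T_\sigma$ for the isometry and makes $\sigma\mapsto T_\sigma$ a well-defined single-valued map. In the reverse direction, Proposition~\ref{prop:bijectionEext} attaches to every surjective isometry $T:\F(\MM)\to\F(\NN)$ a symmetric bijection $\sigma_T:\edg[\MM]\to\edg[\NN]$ satisfying all three conditions and with $T(m_e)=m_{\sigma_T(e)}$.

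The heart of the argument is to verify that these two maps invert one another. Starting from a bijection $\sigma$, the isometry $T_\sigma$ satisfies both $T_\sigma(m_e)=m_{\sigma(e)}$ and $T_\sigma(m_e)=m_{\sigma_{T_\sigma}(e)}$; because the assignment $e\mapsto m_e$ is injective on $\edg[\MM]$ --- the molecule $m_{x,y}=\tfrac{\chi_{\{x\}}-\chi_{\{y\}}}{d(x,y)}$ determines the ordered pair $(x,y)$ through its support and the sign of its coefficients --- this forces $\sigma_{T_\sigma}=\sigma$. Conversely, starting from a surjective isometry $T$, both $T$ and $T_{\sigma_T}$ are surjective linear isometries sending each $m_e$ to $m_{\sigma_T(e)}$, so the density and uniqueness observation above yields $T_{\sigma_T}=T$. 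This establishes the bijective correspondence in~\ref{it:weakPragueIsometries}.

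Finally, for~\ref{it:pragueIsometries} I would invoke the `Moreover' clause of Proposition~\ref{prop:canonicalIsometries}: when $\MM$ and $\NN$ are Prague, $\edg[\MM]$ and $\edg[\NN]$ are admissible, so every bijection satisfying \ref{it:wA1} and \ref{it:wA2} automatically satisfies \ref{it:wA3}. Hence the bijections described in~\ref{it:pragueIsometries} are literally the same as those in~\ref{it:weakPragueIsometries}, and the statement reduces to the already-proved case. I expect the only genuinely delicate point to be the well-definedness via density --- confirming that weak admissibility really does make the elementary molecules span a dense subspace, so that an isometry is pinned down by its values on them; the rest is a formal matching of the two propositions.
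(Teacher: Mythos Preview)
Your proof is correct and follows essentially the same approach as the paper, which simply states that the theorem ``directly follows from Propositions~\ref{prop:canonicalIsometries} and~\ref{prop:bijectionEext}''. You have supplied the details the paper leaves implicit: the density argument for uniqueness of $T_\sigma$, the verification that $\sigma\mapsto T_\sigma$ and $T\mapsto\sigma_T$ are mutually inverse, and the use of the `Moreover' clause of Proposition~\ref{prop:canonicalIsometries} to reduce~\ref{it:pragueIsometries} to~\ref{it:weakPragueIsometries}.
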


\section{Lipschitz-free rigid spaces}\label{sec:rigid}

At this moment we are ready to investigate Lipschitz-free spaces whose surjective isometries are determined by surjective dilations of the underlying metric spaces. They will be the content of this section. A formal definition follows.

\begin{definition}
    Let $(\MM,d)$ be a metric space. We say it is \emph{Lipschitz-free rigid} if for any $T\in\LIso(\F(\MM))$ there exist $\varepsilon\in\{\pm1\}$, $a>0$ and a surjective $a$-dilation $g:\MM\to\MM$ such that $T(m_{x,y})=\varepsilon m_{g(x),g(y)}$ for every $x,y\in\MM$.
    Given a Lipchitz-free rigid space $\MM$, we say it is \emph{strongly Lipschitz-free rigid} if every isometry from $\LIso(\Lip_0(\MM))$ is an adjoint of some isometry from $\LIso(\F(\MM))$.
\end{definition}

\begin{remark}\label{rem:stronglyuniquepredual}We are not aware of any example of a metric space which is Lipschitz-free rigid but not strongly Lipschitz-free rigid. Note however that whenever $\F(\MM)$ is a strongly unique predual of $\Lip_0(\MM)$ then strong rigidity and rigidity coincide. It is not known whether $\F(\MM)$ is a strongly unique predual of $\Lip_0(\MM)$ for any metric space $\MM$, it is known that it is the case whenever $\MM$ is bounded or whenever $\MM$ is a geodesic space, see \cite[Section 3.4]{Weaverbook} for more details.
\end{remark}

\begin{remark}\label{rem:rigidGraphs}
We note that in \cite[Theorem 3.3]{Weaver18} (see also \cite[Theorem 3.27]{Weaverbook}) the author assumes that $\MM$ is a geodesic space, but in the proof he uses only that for any $x,y\in\MM$ and $n\in\Nat$ with $d(0,x)<n$ and $d(0,y)>n$ there exists $z\in\MM$ with $d(0,z) = n$ and $z\in [x,y]$; this condition is satisfied also for any connected graph endowed with the graph metric. So for any such graph $G$, $\F(G)$ is a strongly unique predual of $\Lip_0(G)$ and therefore $G$ is strongly Lipschitz-free rigid whenever it is Lipschitz-free rigid.
\end{remark}

\begin{proposition}\label{prop:rigidSufficient}
 Let $\MM$ be a weak Prague space. Suppose that $\edg$ is $2$-connected and whenever $\sigma:\edg\to\edg$ is a bijection satisfying \ref{it:wA1}, \ref{it:wA2} with $E_1=E_2=\edg$, there exist a graph isomorphism $f:\ver\to \ver$ and $\varepsilon\in \{\pm 1\}$ satisfying $\sigma(v,w)=\varepsilon(f(v),f(w))$ for every $(v,w)\in\edg$. Then $\MM$ is Lipschitz-free rigid. If $\MM$ is a Prague space, then the converse holds as well.
\end{proposition}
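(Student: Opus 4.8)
The plan is to read off the surjective dilation directly from the edge bijection furnished by Proposition~\ref{prop:bijectionEext}. Fix $T\in\LIso(\F(\MM))$. Applying Proposition~\ref{prop:bijectionEext} with $\NN=\MM$ produces a symmetric bijection $\sigma\colon\edg\to\edg$ with $T(m_e)=m_{\sigma(e)}$ for all $e\in\edg$ and satisfying \ref{it:wA1}, \ref{it:wA2}, \ref{it:wA3}. In particular $\sigma$ satisfies \ref{it:wA1} and \ref{it:wA2}, so the standing hypothesis yields a graph isomorphism $f\colon\ver\to\ver$ and a sign $\varepsilon\in\{\pm1\}$ with $\sigma(v,w)=\varepsilon(f(v),f(w))$ for every $(v,w)\in\edg$. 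Now I would use $2$-connectedness: by Fact~\ref{fact:edgesConnected} any two edges of $\edg$ lie on a common simple cycle, so the quantity $\tfrac{d(e)}{d(\sigma(e))}$, which is constant on each simple cycle by \ref{it:wA2}, is in fact a single global constant $K>0$. Since $d(\sigma(v,w))=d(f(v),f(w))$, this reads $d(f(v),f(w))=\tfrac1K\,d(v,w)$ on every edge; set $a:=K$.

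The next step is to promote $f$ from an edge-rescaling to a genuine $a$-dilation. Given $x,y\in\ver$, choose an $\edg$-path $e_1,\dots,e_n$ from $x$ to $y$, write $e_i=(v_i,w_i)$, and compute, using $m_{\sigma(e_i)}=\varepsilon m_{f(v_i),f(w_i)}$ and $d(e_i)=K\,d(f(v_i),f(w_i))$,
\[
\sum_{i=1}^n d(e_i)m_{\sigma(e_i)}=\varepsilon K\sum_{i=1}^n\big(\delta_{f(v_i)}-\delta_{f(w_i)}\big)=\varepsilon K\big(\delta_{f(x)}-\delta_{f(y)}\big),
\]
the last equality being a telescoping, as $f(w_i)=f(v_{i+1})$. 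Taking norms and invoking \ref{it:wA3} gives $d(x,y)=K\,d(f(x),f(y))$, so $f$ is a surjective $a$-dilation of the dense set $\ver$; it therefore extends uniquely to a surjective $a$-dilation $g\colon\MM\to\MM$ (we may assume $\MM$ complete since $\F(\MM)=\F(\overline\MM)$, and surjectivity follows as $g(\MM)$ is closed and contains the dense set $\ver=f(\ver)$). A surjective $a$-dilation induces a linear isometry $T_g\in\LIso(\F(\MM))$ with $T_g(m_{x,y})=m_{g(x),g(y)}$ for all $x,y$, and on each generator $m_e$, $e\in\edg$, we have $T(m_e)=m_{\sigma(e)}=\varepsilon m_{g(s(e)),g(r(e))}=\varepsilon T_g(m_e)$. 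Because $\edg$ is weakly admissible, $\Span\{m_e\colon e\in\edg\}$ is dense in $\F(\MM)$ (it contains every $\delta_x-\delta_y$ with $x,y\in\ver$), so the bounded maps $T$ and $\varepsilon T_g$ coincide, giving $T(m_{x,y})=\varepsilon m_{g(x),g(y)}$ for all $x,y\in\MM$. The main obstacle is exactly this passage from the combinatorial edge data to the global dilation: it is where $2$-connectedness (to obtain a single factor $a$) and condition \ref{it:wA3} together with the density of $\ver$ (to extend $f$ and to identify $T$ with $\varepsilon T_g$) are essential.

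For the converse, assume $\MM$ is a Prague space and Lipschitz-free rigid, and let $\sigma\colon\edg\to\edg$ be any bijection satisfying \ref{it:wA1} and \ref{it:wA2}. Since $\edg$ is admissible, Proposition~\ref{prop:canonicalIsometries}\ref{it:admissibleCond} provides a surjective isometry $T\in\LIso(\F(\MM))$ with $T(m_e)=m_{\sigma(e)}$ for every $e\in\edg$. Rigidity then yields $\varepsilon\in\{\pm1\}$, $a>0$ and a surjective $a$-dilation $g$ with $T(m_{x,y})=\varepsilon m_{g(x),g(y)}$; evaluating at $e=(v,w)\in\edg$ gives $m_{\sigma(e)}=\varepsilon m_{g(v),g(w)}$. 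Two normalized elementary molecules agree up to sign precisely when they share the same unordered support, and matching signs forces $\sigma(v,w)=\varepsilon(g(v),g(w))$. Finally, since $\sigma$ maps $\edg$ bijectively onto $\edg$, the restriction $f:=g\restriction\ver$ sends $\ver$ bijectively onto $\ver$ and edges to edges bijectively, i.e.\ it is a graph isomorphism; hence $\sigma(v,w)=\varepsilon(f(v),f(w))$, which is exactly the asserted condition.
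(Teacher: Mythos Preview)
Your forward direction is essentially the paper's argument, carried out with the same tools (Proposition~\ref{prop:bijectionEext}, Fact~\ref{fact:edgesConnected} for the global constant, an edge-path computation for the dilation, and density of $\ver$).

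There is, however, a genuine gap in your converse. The hypothesis you must recover is a \emph{conjunction}: that $\edg$ is $2$-connected \emph{and} that every $\sigma$ satisfying \ref{it:wA1}, \ref{it:wA2} is induced by a graph isomorphism up to a global sign. You verify only the second clause and never address $2$-connectedness. This is not automatic: the paper proves it by contradiction. Assuming $\edg$ is not $2$-connected, Fact~\ref{fact:edgesConnected} gives distinct edge components $F$ and $F':=\edg\setminus F$, and one defines $\sigma$ to be the identity on $F$ and $e\mapsto -e$ on $F'$. This $\sigma$ satisfies \ref{it:wA1} and \ref{it:wA2} (simple cycles lie entirely in a single edge component, and $d(e)/d(\sigma(e))\equiv 1$), so by Theorem~\ref{thm:1-1correspondencePrague}\ref{it:pragueIsometries} (here the Prague, not merely weak Prague, assumption is used) it induces a surjective isometry $T$. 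Rigidity then forces a single global sign $\varepsilon$, but picking adjacent edges $(x,y)\in F$ and $(x,z)\in F'$ sharing the vertex $x$ yields $\varepsilon=+1$ from the first and $\varepsilon=-1$ from the second, a contradiction. Without this step your converse is incomplete.
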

\begin{proof}
Let $\MM$ be a weak Prague space and suppose the condition from the statement is satisfied. Let $T:\F(\MM)\to\F(\MM)$ be a surjective linear isometry. By Theorem~\ref{thm:1-1correspondencePrague} there exists a bijection $\sigma:\edg\to\edg$ satisfying \ref{it:wA1} and \ref{it:wA2}. So by the assumption there are a graph isomorphism $f:\ver\to \ver$ and $\varepsilon\in \{\pm 1\}$ satisfying $\sigma(v,w)=\varepsilon(f(v),f(w))$ for every $(v,w)\in\edg$. such that $d(\sigma(e))/d(e)$ is constant on simple cycles. Since $\edg$ is $2$-connected and so by Fact~\ref{fact:edgesConnected} every two edges of $\edg$ lie on a common simple cycle, we get that $d(\sigma(e))/d(e)$ is globally constant. For every $(v,w)\in\edg$ we have $\frac{d(f(v),f(w))}{d(v,w)}=\frac{d(\sigma(v,w))}{d(v,w)}=:a$ and we claim that in general, for all $x\neq y\in\ver$ we have $d(f(x),f(y))=ad(x,y)$. Indeed, let $e_1,\ldots,e_n$ be an edge path in $\edg$ from $x$ to $y$. Then
\[d(f(x),f(y))=\big\|\sum_{i=1}^n d(\sigma(e_i))m_{\sigma(e_i)}\big\|=a\Big\|T\big(\sum_{i=1}^n d(e_i) m_{e_i}\big)\Big\| = a\big\|\sum_{i=1}^n d(e_i) m_{e_i}\big\|=a d(x,y).\] Thus $f$ extends to a surjective dilation $\bar f:\MM\to\MM$ which together with $\varepsilon$ witness that $\MM$ is Lipschitz-free rigid.\medskip

Now suppose that $\MM$ is a Lipschitz-free rigid Prague space. In order to get a contradiction suppose that $\edg$ is not $2$-connected. Then by Fact~\ref{fact:edgesConnected} $\edg$ contains at least two distinct edge components. Let $F\subseteq \edg$ be an edge component and $F':=\edg\setminus F$ which is non-empty. We define $\sigma:E\to E$ to be $\mathrm{Id}$ on $F$ and $-\mathrm{Id}$ on $F'$. Then clearly $\sigma$ satisfies \ref{it:wA1} and \ref{it:wA2} and so by Theorem~\ref{thm:1-1correspondencePrague} there exists a surjective isometry $T:\F(\MM)\to\F(\MM)$ such that $T(m_e)=m_{\sigma(e)}$ for every $e\in\edg$. Since $\MM$ is Lipschitz-free rigid there exist a surjective dilation $f:\MM\to\MM$ and $\varepsilon\in \{\pm 1\}$ such that $T(m_{x,y})=\varepsilon m_{f(x),f(y)}$ for all $x,y\in \ver$. Since $\edg$ is connected, there exist $x,y,z\in\ver$ such that $(x,y)\in F$ and $(x,z)\in F'$. Then $m_{x,y} = T(m_{x,y})=\varepsilon m_{f(x),f(y)}$ showing that $\varepsilon=+1$. On the other hand $m_{z,x} = T(m_{x,z})=\varepsilon m_{f(x),f(z)}$ showing that $\varepsilon=-1$, a contradiction.

Finally, suppose that $\sigma:\edg\to\edg$ is a bijection satisfying \ref{it:wA1} and \ref{it:wA2}. By Theorem~\ref{thm:1-1correspondencePrague} there exists a surjective linear isometry $T:\F(\MM)\to\F(\MM)$ satisfying $T(m_e)=m_{\sigma(e)}$ for every $e\in\edg$. Since $\MM$ is Lipschitz-free rigid, there exist $\varepsilon\in\{\pm1\}$, $a>0$ and a surjective $a$-dilation $g:\MM\to\MM$ such that $T(m_{x,y})=\varepsilon m_{g(x),g(y)}$ for every $x,y\in\MM$. Since $T$ maps $\{m_e\colon e\in\edg\}$ bijectively onto itself,  we have $(g(x),g(y))\in \edg$ if and only if $(x,y)\in\edg$ and therefore $g[\ver] = \ver$ and $g\restriction{\ver}$ is the required graph isomorphism.
\end{proof}

The converse for weak Prague spaces does not hold as the following example shows. Since the proof is quite long and technical, in order not to disturb the flow of the this section, this example can be found as Proposition~\ref{prop:rigidNot2Connected} in Section~\ref{sec:examples}.

\begin{example}\label{ex:rigidNot2Connected}
Consider the subset $I:=\{0\}\times [0,1] \subset \Rea^2$ and point $x_0:=(1,0)\in \Rea^2$. Then the metric space $(I\cup \{x_0\},\|\cdot\|_2)$ is a weak Prague space which is Lipschitz-free rigid, but $\edg$ is not $2$-connected.
\end{example}

It is a well-known result of Whitney that for $3$-connected finite graphs any cycle-preserving bijection between edges is induced by a graph isomorphism, see e.g. \cite[Section 5.3]{OMatroidBook}. So Proposition~\ref{prop:rigidSufficient} implies that whenever $\MM$ is finite and $\edg[\MM]$ is $3$-connected then $\MM$ is rigid. This was observed already in \cite{AFGZ}. We shall extend this observation to situations when $\edg$ is infinite, see Theorem~\ref{thm:3ConnectedImpliesRigid}. In order to prove it we need an analogy of the Whitney's theorem mentioned above for infinite graphs, see Corollary~\ref{cor:3ConnectedGraph}. We suspect it is well-known in graph theory, but we did not find any reference, so we include our proof below. In the remainder of this section, all the graphs are undirected. We recall that the directed graphs we consider, usually of the form $(\ver,\edg)$, are in one-to-one correspondence with undirected graphs.

\begin{definition}Let $G=(V,E)$ be a graph. We say $E'\subset E$ is a \emph{basis} of $E$ if it is a maximal set of edges which does not contain a simple cycle. Equivalently, it is a spanning forest, i.e. a disjoint union of trees that is incident with all the vertices of $V_{E'}$.\\
(Note that given connected graph $(V,E)$, there exists a basis $E_0\subset E$ which is then a spanning tree and then $(V,E_0)$ is again a connected graph.)
\end{definition}

\begin{lemma}\label{lem:verticesUsingCycles}
Let $G=(V,E)$ be a $2$-connected graph. For $v\in V$ put $E_v:=\{e\in E\colon v\notin e\}$ and let $E'\subsetneq E$. Then $E'\in \{E_v\colon v\in V\}$ if and only if the following two conditions hold:
\begin{enumerate}[label=(\roman*), series=basis]
    \item\label{it:2ConnectedOnEdges}
    $(\bigcup E',E')$ is connected;
    \item\label{it:condVertexBond}
for every $E_0\subset E'$ and $e\in E\setminus E'$, if $E_0$ is a basis of $E'$ then $E_0\cup \{e\}$ is a basis of $ E$.
\end{enumerate}

In particular, $E'\in \{E_v\colon v\in V\wedge E_v\text{ is $2$-connected}\}$ if and only if \ref{it:condVertexBond} holds together with the following condition
\begin{enumerate}[label=(\roman*), resume*=basis]
    \item\label{it:3ConnectedOnEdges} For every $e,e'\in E'$ there is a simple cycle $C\subset E'$ with $\{e,e'\}\subset C$.
\end{enumerate}
\end{lemma}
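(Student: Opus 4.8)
The plan is to first establish the main equivalence characterizing the sets $E_v$ via \ref{it:2ConnectedOnEdges} and \ref{it:condVertexBond}, and then read off the ``In particular'' statement from it together with Fact~\ref{fact:edgesConnected}. Throughout I would assume $|V|\ge 3$ (the cases $|V|\le 2$ are degenerate and checked by hand) and use repeatedly that a $2$-connected graph on $\ge 3$ vertices has minimum degree at least $2$, since a vertex of degree $\le 1$ would be isolated by the removal of its unique neighbour. For the forward direction, suppose $E'=E_v$. Removing $v$ from $G$ leaves the connected graph $G-v$ with no isolated vertices, whose edge set is exactly $E_v$ and whose vertex set is $\bigcup E_v=V\setminus\{v\}$; this is \ref{it:2ConnectedOnEdges}. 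For \ref{it:condVertexBond}, a basis $E_0$ of $E_v$ is a spanning tree of $G-v$, so $\bigcup E_0=V\setminus\{v\}$, and any $e\in E\setminus E_v$ is incident to $v$, say $e=\{v,w\}$; since $v\notin\bigcup E_0$, attaching $e$ adds $v$ as a leaf, so $E_0\cup\{e\}$ is an acyclic spanning subgraph of $G$, hence a basis of $E$.

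For the backward direction I would assume \ref{it:2ConnectedOnEdges} and \ref{it:condVertexBond} and set $M:=V\setminus\bigcup E'$, the plan being to prove $|M|=1$: this at once identifies the unique $v\in M$ with $E'=E_v$. Fix a basis $E_0$ of $E'$; by \ref{it:2ConnectedOnEdges} it is a spanning tree of $(\bigcup E',E')$, so $\bigcup E_0=\bigcup E'$. Since $E'\subsetneq E$ I may pick $e^*\in E\setminus E'$, and by \ref{it:condVertexBond} the set $E_0\cup\{e^*\}$ is a basis of the connected graph $G$, hence a spanning tree meeting every vertex. Therefore $V=\bigcup E_0\cup e^*=\bigcup E'\cup e^*$, which shows $M\subseteq e^*$ for this (and, as it was arbitrary, every) $e^*\in E\setminus E'$. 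Moreover $M\ne\emptyset$: if $\bigcup E'=V$ then both endpoints of $e^*$ already lie in the tree $E_0$, so the $E_0$-path joining them together with $e^*$ is a simple cycle, contradicting that $E_0\cup\{e^*\}$ is a basis.

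The hard part is ruling out $|M|\ge 2$, and this is where $2$-connectivity enters and the argument must be made to work for infinite $G$. Suppose $a\ne b$ both lie in $M$. Each $e^*\in E\setminus E'$ satisfies $M\subseteq e^*$ and $|e^*|=2$, forcing $e^*=\{a,b\}$; since $G$ is simple there is at most one such edge, so $E\setminus E'=\{\{a,b\}\}$. But $a\in M$ means $a\notin\bigcup E'$, so every edge of $G$ incident to $a$ lies in $E\setminus E'=\{\{a,b\}\}$, giving $\deg_G(a)\le 1$ and contradicting $2$-connectivity. Hence $|M|=1$, say $M=\{v\}$. Then $E'=E_v$: every edge of $E'$ avoids $v$ because $v\notin\bigcup E'$, so $E'\subseteq E_v$, while if $e\in E_v\setminus E'$ then $e\in E\setminus E'$ forces $M=\{v\}\subseteq e$, contradicting $v\notin e$. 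Note every step (spanning trees, unique tree-paths, the degree count) is finite or local, so nothing breaks when $G$ is infinite.

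Finally, for the ``In particular'' statement I would observe that \ref{it:3ConnectedOnEdges} says precisely that any two edges of $(\bigcup E',E')$ lie on a common simple cycle, which by Fact~\ref{fact:edgesConnected} is equivalent to $(\bigcup E',E')$ being $2$-connected and in particular implies \ref{it:2ConnectedOnEdges}. Thus if \ref{it:condVertexBond} and \ref{it:3ConnectedOnEdges} hold, the main equivalence yields $E'=E_v$ for some $v$, and Fact~\ref{fact:edgesConnected} shows this $E_v$ is $2$-connected; conversely, if $E'=E_v$ with $E_v$ $2$-connected, then \ref{it:condVertexBond} holds by the main equivalence and \ref{it:3ConnectedOnEdges} holds by Fact~\ref{fact:edgesConnected} applied to $E_v$.
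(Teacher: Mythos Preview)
Your proof is correct and follows essentially the same approach as the paper: both directions hinge on the observation that a basis of $E'$ is a spanning tree of $(\bigcup E',E')$ and that adding one edge from $E\setminus E'$ must produce a spanning tree of $G$. The only notable difference is cosmetic: where the paper argues uniqueness of $v$ by noting that a single added edge cannot reach two missing vertices, you phrase the same step via the set $M=V\setminus\bigcup E'$ and a minimum-degree count, which is a slightly cleaner packaging of the identical idea.
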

\begin{proof}First, pick $v\in V$ and let us verify that condition \ref{it:condVertexBond} holds for $E'=E_v$ (condition \ref{it:2ConnectedOnEdges} follows from $2$-connectedness of $G$). Pick a basis $E_0$ of $E_v$ and $e\in E\setminus E_v$. Since $E_0$ does not contain a simple cycle, $E_0\cup \{e\}$ does not contain a simple cycle as well. Moreover, since $G$ is $2$-connected, $E_v$ is connected and so $E_0$ is connected as well, which implies that $E_0\cup \{e\}$ is connected subset of $E$ and therefore it is a maximal set of edges not containing simple cycles, that is, it is a basis of $E$. Thus, \ref{it:condVertexBond} holds for $E'=E_v$.

On the other hand, pick some $E'\subsetneq E$ satisfying \ref{it:2ConnectedOnEdges} and \ref{it:condVertexBond}. First, we note that there exists $v\in V$ such that $E'\subset E_v$ as otherwise any basis $E_0$ of $E'$ is incident to all the vertices from $V$ and so for any $e\in E\setminus E'$ we have that $E_0\cup \{e\}$ contains a simple cycle, which contradicts \ref{it:condVertexBond}. Moreover, this $v$ is unique since if $E'\subseteq E_v\cap E_w$, for $v\neq w$, then for any spanning tree $E_0$ of $E'$ adding one $e\in E\setminus E'$ cannot be a spanning tree of $E$ because either $E_0\cup\{e\}$ is not a tree, or $E_0\cup\{e\}$ is incident just with one of the vertices $v$ and $w$. By \ref{it:2ConnectedOnEdges}, $(\bigcup E',E')$ is connected and so there exists $E_0\subset E'$ which is basis of $E'$ and $(\bigcup E',E_0)$ is connected. Thus, since $v$ is the unique vertex satisfying $E'\subset E_v$, $E_0$ is incident with all the vertices of $V$ except $v$. If there was $e\in E_v\setminus E'$, then by \ref{it:condVertexBond} we would obtain that $E_0\cup \{e\}$ is basis of $E$ which would however contradicts the fact that $E_0\cup \{e\}\subset E_v$ is not basis of $E$. Thus, we have $E_v\setminus E' = \emptyset$ and therefore $E' = E_v$.

The ``In particular'' part now easily follows using Fact~\ref{fact:edgesConnected} and the easy observation that \ref{it:3ConnectedOnEdges} implies \ref{it:2ConnectedOnEdges}.
\end{proof}

\begin{corollary}[Whitney's theorem for infinite graphs]\label{cor:3ConnectedGraph}Let $G=(V,E)$ be a $3$-connected graph and let $\sigma:E\to E$ be a simple cycle-preserving bijection. Then there is a bijection $f_\sigma:V\to V$ satisfying $\sigma(\{v,w\}) = \{f_\sigma(v),f_\sigma(w)\}$ for every $\{v,w\}\in E$.\\
(In particular, $f_\sigma$ is graph isomorphism.)
\end{corollary}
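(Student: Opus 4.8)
The plan is to reconstruct the vertices of $G$ purely from its simple-cycle structure, using the combinatorial characterization in Lemma~\ref{lem:verticesUsingCycles}, and then to read off $f_\sigma$ from the way $\sigma$ permutes the sets $E_v=\{e\in E\colon v\notin e\}$. First I would record that every $E_v$ is eligible for the ``In particular'' part of Lemma~\ref{lem:verticesUsingCycles}. Since $G$ is $3$-connected, deleting the single vertex $v$ leaves a $2$-connected graph (removing one further vertex keeps it connected), and since the minimum degree of a $3$-connected graph is at least $3$, no vertex of $V\setminus\{v\}$ becomes isolated; hence $(\bigcup E_v,E_v)=G-v$ is $2$-connected. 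Consequently $\{E_v\colon v\in V\}$ coincides exactly with the family of proper subsets $E'\subsetneq E$ satisfying conditions \ref{it:condVertexBond} and \ref{it:3ConnectedOnEdges}.

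The crux is that this family is invariant under $\sigma$. Both characterizing conditions are phrased purely in terms of simple cycles and of bases, where a basis is by definition a maximal subset containing no simple cycle. A simple-cycle-preserving bijection preserves the property ``contains a simple cycle'' in both directions, since a simple cycle $C\subseteq A$ corresponds to the simple cycle $\sigma(C)\subseteq\sigma(A)$; it therefore carries a basis of any $S\subseteq E$ to a basis of $\sigma(S)$, and, using $\sigma(E)=E$, a basis of $E$ to a basis of $E$. Condition \ref{it:3ConnectedOnEdges} transports verbatim, while \ref{it:condVertexBond} transports once one matches $E\setminus E'$ bijectively with $E\setminus\sigma(E')$. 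Applying this to both $\sigma$ and $\sigma^{-1}$ shows that $E'$ satisfies \ref{it:condVertexBond} and \ref{it:3ConnectedOnEdges} if and only if $\sigma(E')$ does, so $\sigma$ permutes $\{E_v\colon v\in V\}$. Because the minimum degree is at least $3$, the assignment $v\mapsto E_v$ is injective: if $E_v=E_w$ with $v\neq w$, then every edge incident to $v$ would also be incident to $w$, forcing each such edge to equal $\{v,w\}$, which is impossible since $v$ has at least three incident edges. Thus I may define $f_\sigma\colon V\to V$ by the rule $\sigma(E_v)=E_{f_\sigma(v)}$, and it is a bijection.

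It then remains to verify the edge formula, which I would obtain by tracking incidences through $\sigma$. Fix $e=\{v,w\}\in E$ and an arbitrary $u\in V$. Incidence is detected by the sets $E_u$: one has $u\in e$ if and only if $e\notin E_u$, and likewise $z\in\sigma(e)$ if and only if $\sigma(e)\notin E_z$. Since $\sigma$ is a bijection with $\sigma(E_u)=E_{f_\sigma(u)}$, we get $e\in E_u$ if and only if $\sigma(e)\in E_{f_\sigma(u)}$, which rephrases as $u\in e$ if and only if $f_\sigma(u)\in\sigma(e)$. As $f_\sigma$ ranges bijectively over $V$, this yields $\sigma(e)=\{f_\sigma(u)\colon u\in e\}=\{f_\sigma(v),f_\sigma(w)\}$, as required, and the parenthetical claim that $f_\sigma$ is a graph isomorphism is then immediate.

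I expect the main obstacle to be the invariance of condition \ref{it:condVertexBond} under $\sigma$, that is, verifying that $\sigma$ and $\sigma^{-1}$ send bases to bases; this is precisely the point where one must use that a basis is an intrinsically cycle-theoretic object and carefully match complements. The observation that every $E_v$ is $2$-connected is a minor but essential point, since it is exactly what allows us to invoke the ``In particular'' part of Lemma~\ref{lem:verticesUsingCycles} rather than its first part.
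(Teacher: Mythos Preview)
Your proposal is correct and follows essentially the same route as the paper's proof: use the ``In particular'' part of Lemma~\ref{lem:verticesUsingCycles} to identify $\{E_v\colon v\in V\}$ cycle-theoretically, observe that \ref{it:condVertexBond} and \ref{it:3ConnectedOnEdges} are $\sigma$-invariant, define $f_\sigma$ via $\sigma(E_v)=E_{f_\sigma(v)}$, and read off the edge formula from incidences. Your version spells out a few points the paper leaves implicit (injectivity of $v\mapsto E_v$, why bases go to bases), and your incidence argument for $\sigma(e)=\{f_\sigma(v),f_\sigma(w)\}$ is a mild rephrasing of the paper's one-line computation $\sigma(e)=(E\setminus E_{f_\sigma(v)})\cap(E\setminus E_{f_\sigma(w)})$.
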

\begin{proof}For $v\in V$ put $E_v:=\{e\in E\colon v\notin e\}$. Since $G$ is $3$-connected, every $E_v$ is $2$-connected. Thus, by Lemma~\ref{lem:verticesUsingCycles}, because conditions \ref{it:3ConnectedOnEdges} and \ref{it:condVertexBond} are preserved by simple cycle-preserving bijections, there exists a bijection $f_\sigma:V\to V$ satisfying $\sigma(E_v) = E_{f_\sigma(v)}$ for any $v\in V$. Given $e = \{v,w\}$, we have $\sigma(e) = (E\setminus E_{f_\sigma(v)})\cap (E\setminus E_{f_\sigma(w)}) = \{f_\sigma(v),f_\sigma(w)\}$ and therefore $f_\sigma$ is graph isomorphism satisfying that $\sigma(\{v,w\}) = \{f_\sigma(v),f_\sigma(w)\}$ for every $\{v,w\}\in E$.
\end{proof}

  As an easy consequence of Proposition~\ref{prop:rigidSufficient} and Corollary~\ref{cor:3ConnectedGraph} we obtain the main result of this section.

\begin{theorem}\label{thm:3ConnectedImpliesRigid}
  Let $\MM$ be a weak Prague metric space such that $\edg[\MM]$ is $3$-connected. Then $\MM$ is Lipschitz-free rigid.
\end{theorem}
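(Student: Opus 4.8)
The plan is to derive Theorem~\ref{thm:3ConnectedImpliesRigid} as an immediate consequence of the two tools developed just above it: Proposition~\ref{prop:rigidSufficient} (a sufficient condition for Lipschitz-free rigidity of weak Prague spaces) and Corollary~\ref{cor:3ConnectedGraph} (Whitney's theorem for infinite $3$-connected graphs). Since $\MM$ is assumed to be a weak Prague space, by definition $\edg[\MM]$ is weakly admissible, so the hypotheses of Proposition~\ref{prop:rigidSufficient} are within reach; the only thing I need to verify is the combinatorial condition that \emph{every} bijection $\sigma:\edg\to\edg$ satisfying \ref{it:wA1} and \ref{it:wA2} is induced, up to a global sign $\varepsilon\in\{\pm 1\}$, by a graph isomorphism $f:\ver\to\ver$.

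First I would note that $3$-connectedness of $\edg[\MM]$ implies $2$-connectedness, so the first hypothesis of Proposition~\ref{prop:rigidSufficient} (that $\edg$ is $2$-connected) is satisfied. The second step is to translate the directed picture into the undirected one: as emphasized in the preliminaries, the directed graph $(\ver,\edg)$ (whose edge set is symmetric) corresponds to an undirected graph, and a bijection $\sigma$ satisfying \ref{it:wA1} is precisely a \emph{simple cycle-preserving} bijection on this underlying undirected graph once we pass to the edge classes $[e]=\{e,-e\}$. Applying Corollary~\ref{cor:3ConnectedGraph} to this undirected $3$-connected graph produces a bijection $f_\sigma:\ver\to\ver$ which is a graph isomorphism with $\sigma(\{v,w\})=\{f_\sigma(v),f_\sigma(w)\}$ for every edge. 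Lifting back to the directed setting, this says that for each directed edge $(v,w)$ we have $\sigma(v,w)\in\{(f_\sigma(v),f_\sigma(w)),(f_\sigma(w),f_\sigma(v))\}$, i.e. $\sigma(v,w)=\varepsilon_{(v,w)}(f_\sigma(v),f_\sigma(w))$ for some sign depending a priori on the edge.

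The main point to nail down is that this sign is \emph{global}, i.e. that a single $\varepsilon\in\{\pm 1\}$ works for all edges simultaneously, which is exactly the form required by Proposition~\ref{prop:rigidSufficient}. Here I would use the symmetry property $\sigma(-e)=-\sigma(e)$ together with connectedness of $\edg$ and the fact that $\sigma$ preserves simple cycles: along any simple cycle the orientations must be globally consistent, since a simple cycle in $\ver$ maps under $f_\sigma$ to a simple cycle and $\sigma$ sends directed cycles to directed cycles (the coherence of orientations propagates around the cycle). Because $\edg$ is $2$-connected, Fact~\ref{fact:edgesConnected} guarantees that any two edges lie on a common simple cycle, so the sign cannot change between edges, yielding a uniform $\varepsilon$. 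With the combinatorial hypothesis of Proposition~\ref{prop:rigidSufficient} thus verified, that proposition immediately gives that $\MM$ is Lipschitz-free rigid, completing the proof.

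I expect the only genuinely delicate step to be this sign-coherence argument: Corollary~\ref{cor:3ConnectedGraph} is stated for undirected graphs and hands back only an unoriented incidence relation, so one must be careful to argue that the orientations chosen by $\sigma$ are compatible across all edges rather than merely edge-by-edge. Everything else—checking $2$-connectedness from $3$-connectedness, the correspondence between directed and undirected graphs, and the invocation of Proposition~\ref{prop:rigidSufficient}—is routine bookkeeping given the machinery already in place.
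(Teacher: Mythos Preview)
Your proposal is correct and follows essentially the same approach as the paper: pass to the undirected graph, apply Corollary~\ref{cor:3ConnectedGraph} to produce the graph isomorphism $f$, then argue that the edge-by-edge sign $\varepsilon_{(v,w)}$ is in fact global by using that $\sigma$ preserves simple \emph{directed} cycles together with $2$-connectedness (Fact~\ref{fact:edgesConnected}), and conclude via Proposition~\ref{prop:rigidSufficient}. The paper makes the sign-coherence step slightly more explicit---locating the first index along a common simple cycle where the sign flips and observing that then $r(\sigma(e_{j-1}))\neq s(\sigma(e_j))$, contradicting cycle-preservation---but this is exactly the ``propagation around the cycle'' you anticipated as the delicate point.
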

\begin{proof}Let $\sigma:\edg\to\edg$ be a symmetric bijection satisfying \ref{it:wA1},\ref{it:wA2}, for every $e\in\edg$. Let $(\ver,[\edg])$ be the undirected graph where $[\edg]:=\{[e]\colon e\in\edg\}$ and for each $e\in\edg$, $[e]=\{e,-e\}$. Let $[\sigma]:[\edg]\to[\edg]$ be the bijection defined by $[\sigma]([e])=[\sigma(e)]$. Clearly, it is simple cycle preserving and since $(\ver,[\edg])$ is $3$-connected, by Corollary~\ref{cor:3ConnectedGraph}, $[\sigma]$ is induced by a graph isomorphism $f:\ver\to\ver$. Let $(x,y)\in\edg$ be arbitrary and let $\varepsilon\in\{\pm 1\}$ be such that so that $\sigma(x,y)=\varepsilon(f(x),f(y))$ holds. We claim that $\varepsilon$ does not depend on the choice of $(x,y)\in\edg$. Otherwise, there are $(x,y),(x',y')\in\edg$ such that $\sigma(x,y)=(f(x),f(y))$ and $\sigma(x',y')=(f(y'),f(x'))$. Since $\edg$ is $2$-connected, there exists a simple directed cycle $e_1,\ldots,e_i,\ldots,e_n\in\edg$ such that $e_1=(x,y)$ and $e_i=(x',y')$. Let $j\leq n$ be the smallest index such that $(f(s(e_j)),f(r(e_j))=-\sigma(e_j)$. Notice that $1<j\leq i$. Then we get that $r(\sigma(e_{j-1}))\neq s(\sigma(e_j))$. This is a contradiction since $\sigma$ preserves simple directed cycles.

Consequently, we have $\sigma(x,y)=\varepsilon(f(x),f(y))$, for every $(x,y)\in\edg$. Since $\sigma$ was arbitrary, the assumption of Proposition~\ref{prop:rigidSufficient} is satisfied, so $\MM$ is Lipschitz-free rigid.
\end{proof}

\begin{remark}\label{rem:rigidNot3Connected}
We note that by Proposition~\ref{prop:rigidNot3Connected} from Section~\ref{sec:examples} there exists a Lipschitz-free rigid finite graph $G=(V,E)$ which is $2$-connected, but not $3$-connected. This shows that the converses to Whitney's theorem (Corollary~\ref{cor:3ConnectedGraph}) and Theorem~\ref{thm:3ConnectedImpliesRigid} do not hold even for finite graphs.
\end{remark}

The following is a basic example witnessing the first applications of Theorem~\ref{thm:3ConnectedImpliesRigid}. The first one is known (see \cite[Theorem 3.55]{Weaverbook}), the second is known only for finite graphs (see \cite[Corollary 4.3]{AFGZ}).

\begin{corollary}\label{cor:firstRigidExamples}
The following are examples of Lipschitz-free rigid Prague spaces.
\begin{itemize}
    \item Uniformly concave metric spaces.
    \item $3$-connected graphs $(V,E)$ endowed with the graph metric.
\end{itemize}
\end{corollary}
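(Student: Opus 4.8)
The plan is to verify that both classes of spaces satisfy the hypotheses of Theorem~\ref{thm:3ConnectedImpliesRigid}, namely that they are weak Prague spaces whose extremal graph $\edg$ is $3$-connected. Since both classes are in fact Prague spaces (the first by the remark following the definition of uniform concavity, the second by Lemma~\ref{lem:Eext=E}), weak admissibility is already settled, and the only real content is to establish $3$-connectedness of $\edg$ in each case.

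For uniformly concave spaces I would use the observation made just after the definition of uniform concavity: such a space satisfies $\edg[\MM] = \{(x,y)\colon x\neq y\in\MM\}$, so the undirected graph underlying $\edg$ is the \emph{complete} graph on $\MM$. The key step is then purely graph-theoretic: a complete graph on at least four vertices is $3$-connected, since removing any two vertices leaves a complete (hence connected) graph on the remaining vertices. One should dispose of the degenerate small cases separately---for a space with at most three points the statement is either vacuous or trivial, as then $\LIso(\F(\MM))$ is already pinned down by the few molecules available---so I would simply remark that we may assume $|\MM|\geq 4$, where $3$-connectedness is immediate.

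For a $3$-connected graph $(V,E)$ endowed with the graph metric, Lemma~\ref{lem:Eext=E} gives $\edg = E$ directly, and so $\edg$ is $3$-connected by hypothesis. Thus the conclusion follows at once from Theorem~\ref{thm:3ConnectedImpliesRigid}. The only minor point to check is that the directed-versus-undirected bookkeeping matches: the $3$-connectedness hypothesis on the graph refers to its undirected form, while $\edg$ is a symmetric directed graph, but these are in one-to-one correspondence as recalled in the preliminaries, so no difficulty arises.

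I do not expect a genuine obstacle here, since this corollary is explicitly advertised as ``an easy consequence'' of Theorem~\ref{thm:3ConnectedImpliesRigid}. The one place demanding a little care is the handling of uniformly concave spaces with fewer than four points, where the complete graph fails to be $3$-connected; the cleanest route is to note that rigidity is either trivial or vacuous in those cases and otherwise invoke the elementary fact that $K_n$ is $(n-1)$-connected. Everything else is a direct citation of the two structural facts ($\edg$ complete, respectively $\edg = E$) together with Theorem~\ref{thm:3ConnectedImpliesRigid}.
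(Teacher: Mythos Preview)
Your proposal is correct and matches the paper's approach exactly: the paper gives no explicit proof here, presenting the corollary as an immediate application of Theorem~\ref{thm:3ConnectedImpliesRigid} after the structural observations (complete extremal graph for uniformly concave spaces; $\edg=E$ for graphs via Lemma~\ref{lem:Eext=E}). You are in fact more careful than the paper in flagging the degenerate cases $|\MM|\leq 3$ for uniformly concave spaces, which the paper passes over in silence.
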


\begin{remark}
We note that the examples mentioned in Corollary~\ref{cor:firstRigidExamples} are even strongly Lipschitz-free rigid. For the case of uniformly concave metric spaces it follows from \cite[Theorem 3.56]{Weaverbook}, for the case of $3$-connected graphs we refer the reader to Remark~\ref{rem:rigidGraphs}.
\end{remark}

\section{Building new Lipschitz-free rigid spaces}\label{sec:buildingnew}

Although we already know examples of Lipschitz-free rigid metric spaces from Corrolary~\ref{cor:firstRigidExamples} and more will appear in Section~\ref{sec:examples}, it is also very useful to know that one can build new examples from the old ones using simple operations that are described below. The main application will be an isometric embedding of any metric space into a Lipschitz-free rigid space containing only three more points.
\subsection{Sums of metric spaces}\label{subsec:sums}

Given two metric spaces $\NN_1$ and $\NN_2$ and $p\in [1,\infty)$, we denote by $\NN_1\oplus_p \NN_2$ their $\ell_p$-sum, that is, the metric space $\NN_1\times\NN_2$ with metric defined as $d((x,y),(u,v)):=\norm{(d_{\NN_1}(x,u),d_{\NN_2}(y,v))}_p$, where $\|(r_1,r_2)\|_p$ denotes $\sqrt[p]{r_1^p+r_2^p}$. It is well-known that this defines a metric. One way how to check it is by the fact that $\NN_1\oplus_p \NN_2$ is canonically isometric to a subspace of the $\ell_p$-sum of Banach spaces $\FF(\NN_1)\oplus_p \F(\NN_2)$. Concerning extreme points we have the following.

\begin{proposition}\label{prop:pSumsExt}
    Let $\NN_1$, $\NN_2$ be pointed metric spaces, $p\in (1,\infty)$ and put $\MM:=\NN_1\oplus_p \NN_2$. Pick $(x_1,y_1),(x_2,y_2)\in\MM$ such that $m_{y_1,y_2}\in  B_{\F(\NN_2)}$ is a preserved extreme point. Then $m_{(x_1,y_1),(x_2,y_2)}\in B_{\F(\MM)}$ is a preserved extreme point as well.
\end{proposition}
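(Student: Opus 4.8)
The plan is to use the characterization of preserved extreme points from Fact~\ref{fact:extremePoints}, namely the quantitative condition: $m_{u,v}$ is a preserved extreme point of $B_{\F(\MM)}$ if and only if for every $\varepsilon>0$ there is $\delta>0$ such that every $z\in\MM$ with $\min\{d(u,z),d(v,z)\}\ge\varepsilon$ satisfies $d(u,v)\le d(u,z)+d(v,z)-\delta$. So I would set $u:=(x_1,y_1)$ and $v:=(x_2,y_2)$ and try to verify this condition for $\MM$, deducing the required $\delta$ from the corresponding $\delta'$ that the hypothesis gives for the preserved extreme point $m_{y_1,y_2}$ in $\NN_2$.

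First I would record the elementary inequality relating the $\ell_p$-metric on $\MM$ to its components. For a point $z=(s,t)\in\MM$ write $a:=d_{\NN_1}(x_1,s)$, $b:=d_{\NN_1}(x_2,s)$, $c:=d_{\NN_2}(y_1,t)$, $e:=d_{\NN_2}(y_2,t)$, and abbreviate $P:=d_{\NN_1}(x_1,x_2)$, $Q:=d_{\NN_2}(y_1,y_2)$, so that $d(u,v)=\|(P,Q)\|_p$, $d(u,z)=\|(a,c)\|_p$ and $d(v,z)=\|(b,e)\|_p$. The strict convexity of the $\ell_p$-norm for $p\in(1,\infty)$ is the source of the "gap" $\delta$. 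The key point I want is: because $Q\le c+e$ always (triangle inequality in $\NN_2$), and because the defect $c+e-Q$ can be made bounded below whenever $t$ is "not too close to both $y_1$ and $y_2$" (this is exactly the preserved-extreme-point hypothesis on $m_{y_1,y_2}$), I can feed a positive lower bound on $c+e-Q$ into the strict convexity of $\|\cdot\|_p$ to produce a gap in the $\MM$-inequality. The remaining case is when $t$ is close to both $y_1$ and $y_2$; then $Q$ is small and I would instead use that $z$ is still $\varepsilon$-far from $u$ or $v$, which forces $s$ to be far from $x_1$ or $x_2$, and exploit the strict triangle inequality in the first coordinate together with strict convexity.

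Concretely, the plan is a case analysis driven by the threshold $\varepsilon$. Fix $\varepsilon>0$. Let $\delta_0>0$ be obtained from the hypothesis applied to $m_{y_1,y_2}$ at level $\varepsilon/2$ (say), so that $\min\{c,e\}\ge\varepsilon/2$ forces $c+e-Q\ge\delta_0$. In the case $\min\{c,e\}\ge\varepsilon/2$ I would argue that replacing the second coordinate of $z$ by one realizing $c+e=Q$ strictly decreases $\|(a,c)\|_p+\|(b,e)\|_p$ relative to $\|(P,Q)\|_p$ by an amount bounded below in terms of $\delta_0$, $\varepsilon$ and the ambient distances, using strict convexity of $t\mapsto\|(\cdot,t)\|_p$. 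In the complementary case $\min\{c,e\}<\varepsilon/2$, the condition $\min\{d(u,z),d(v,z)\}\ge\varepsilon$ forces the first-coordinate distance from $s$ to the corresponding point $x_i$ to be at least of order $\varepsilon$, and I would again extract a uniform gap from strict convexity and the triangle inequality in $\NN_1$. Taking $\delta$ to be the minimum of the two gaps finishes the verification.

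The main obstacle I anticipate is making the strict-convexity gap \emph{uniform} and quantitative rather than merely strict: I need an explicit lower bound on $\|(a,c)\|_p+\|(b,e)\|_p-\|(P,Q)\|_p$ depending only on $\varepsilon$ and a lower bound for $c+e-Q$ (or for the first-coordinate defect), independent of how large $a,b,c,e$ may be as $z$ ranges over an unbounded space. The natural route is to prove an auxiliary elementary lemma of the form: for $p\in(1,\infty)$ and any reals with $0\le Q\le c+e$, one has $\|(a,c)\|_p+\|(b,e)\|_p\ge\|(a+b,\,c+e)\|_p\ge\|(P,Q)\|_p+\eta$, where the first inequality is the triangle inequality for $\|\cdot\|_p$ in $\Rea^2$ and the second uses $a+b\ge P$ together with strict monotonicity/convexity to turn the strict gap $c+e-Q>0$ (or $a+b-P>0$) into a quantitative $\eta=\eta(\varepsilon)>0$. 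Verifying such a lemma with the right uniformity — in particular ensuring the bound does not degenerate as the unused coordinate grows — is where the real care is needed; once it is in hand, the two-case argument above assembles routinely.
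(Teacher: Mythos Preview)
Your plan is essentially the paper's: same use of the characterization in Fact~\ref{fact:extremePoints}, same two-case split according to whether the second coordinate $t$ is far from both $y_1,y_2$ or not. Two points deserve attention.

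First, your uniformity worry in Case~1 is unfounded. The quantities $P=d_{\NN_1}(x_1,x_2)$ and $Q=d_{\NN_2}(y_1,y_2)$ are \emph{fixed}, so once $c+e\ge Q+\delta_0$ and $a+b\ge P$ your chain
\[
\|(a,c)\|_p+\|(b,e)\|_p\ \ge\ \|(a+b,c+e)\|_p\ \ge\ \|(P,Q+\delta_0)\|_p
\]
(Minkowski, then coordinatewise monotonicity) gives the fixed gap $\|(P,Q+\delta_0)\|_p-\|(P,Q)\|_p>0$, independent of how large $a,b,c,e$ are. This is in fact slightly cleaner than the paper's H\"older computation for this case.

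Second, Case~2 is where the real work lies, and two things in your sketch need correction. The threshold cannot be $\varepsilon/2$ in general: you need $\eta\varepsilon$ with $\eta\in(0,1)$ chosen so that the cone $\{(a,c):a\ge\varepsilon(1-\eta^p)^{1/p},\ c\le\eta\varepsilon\}$ avoids the direction of $(P,Q)$; the paper takes $\eta$ with $(1-\eta^p)^{1/p}>\eta\,P/Q$. And the phrase ``triangle inequality in $\NN_1$'' is misleading: $a+b\ge P$ can be an equality, so there is no gap there. The gap in Case~2 comes entirely from the H\"older defect $\|(a,c)\|_p\|\xi\|_q-(a,c)\cdot\xi$, where $\xi=(P^{p-1},Q^{p-1})$, which is exactly the function $-h(a,c)$ the paper introduces. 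The paper's Claim shows this defect is bounded below on the region above by the fixed value $-h(\varepsilon(1-\eta^p)^{1/p},\eta\varepsilon)>0$, and this is precisely the uniformity estimate you anticipated needing.
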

\begin{proof}
 We shall use the characterization of preserved extreme points from Fact~\ref{fact:extremePoints}.

Pick $\varepsilon>0$. Then, we find $\eta\in (0,1)$ such that
\[
(1-\eta^p)^{1/p}>\eta\tfrac{d_{\NN_1}(x_1,x_2)}{d_{\NN_2}(y_1,y_2)}.
\]
By the assumption there exists $\delta_1>0$ such that given $z\in\NN_2$ with $\min\{d_{\NN_2}(y_1,z), d_{\NN_2}(y_2,z)\}\geq \eta\varepsilon$, we have $d_{\NN_2}(y_1,y_2) \leq  d_{\NN_2}(y_1,z) + d_{\NN_2}(y_2,z)- \delta_1$. Let 
 us denote by $\bullet$ the scalar product and consider the function $h:[0,\infty)^2\to \Rea$ defined by
\[
h(a,b):=(a,b)\bullet(d_{\NN_1}(x_1,x_2)^{p-1},d_{\NN_2}(y_1,y_2)^{p-1}) - \norm{(a,b)}_p\norm{(d_{\NN_1}(x_1,x_2)^{p-1},d_{\NN_2}(y_1,y_2)^{p-1})}_q.
\]
Note that by the H\"older inequality we have $h(a,b)\leq 0$ and $h(a,b)=0$ if and only if $ (a,b)$ is a multiple of $\big(d_{\NN_1}(x_1,x_2)^{q(p-1)/p},d_{\NN_2}(y_1,y_2)^{q(p-1)/p}\big) = \big(d_{\NN_1}(x_1,x_2),d_{\NN_2}(y_1,y_2)\big)$. 
Thus, we have $\delta_2:=-h(\varepsilon(1-\eta^p)^{1/p},\varepsilon\eta)>0$ and finally we put 
\[
\delta:=\frac{\min\{\delta_1\cdot d_{\NN_2}(y_1,y_2)^{p-1},\delta_2\}}{\norm{(d_{\NN_1}(x_1,x_2)^{p-1},d_{\NN_2}(y_1,y_2)^{p-1})}_q}.
\]
Pick $(x_3,y_3)\in\MM$ with
\begin{equation}\label{eq:presExtBigMin}
\min\Big\{d_{\MM}\big((x_1,y_1),(x_3,y_3)\big),d_{\MM}\big((x_3,y_3),(x_2,y_2)\big)\Big\}\geq \varepsilon.
\end{equation}
In order to shorten the notation we shall write $d_{i,j}$ and $\rho_{i,j}$ instead of $d_{\NN_1}(x_i,x_j)$ and $d_{\NN_2}(y_i,y_j)$ for $i,j\in \{1,2,3\}$, respectively.

Our aim is to show that we have
\begin{equation}
d_{\MM}\big((x_1,y_1),(x_2,y_2)\big)\leq d_{\MM}\big((x_1,y_1),(x_3,y_3)\big) + d_{\MM}\big((x_3,y_3),(x_2,y_2)\big) - \delta,
\end{equation}
which by the above mentioned characterization of preserved extreme points will imply that $m_{(x_1,y_1),(x_2,y_2)}$ is preserved extreme point.

We may without loss of generality assume that $\rho_{1,3}\leq \rho_{2,3}$. Thus, if $\rho_{1,3}\geq \eta\varepsilon$, by the choice of $\delta_1$ we obtain
\[\begin{split}
d_\MM\big( & (x_1,y_1),(x_2,y_2)\big) = \tfrac{1}{\|(d_{1,2},\rho_{1,2})\|_p^{p-1}}\Big\|\Big(d_{1,2},\rho_{1,2}\Big)\Big\|_p^p
\\
&\leq \tfrac{1}{\|(d_{1,2},\rho_{1,2})\|_p^{p-1}}\Big((d_{1,3} + d_{3,2})d_{1,2}^{p-1} + (\rho_{1,3} + \rho_{3,2}-\delta_1)\rho_{1,2}^{p-1}\Big)\\
& \stackrel{\text{H\"older}}{\leq} \tfrac{1}{\|(d_{1,2},\rho_{1,2})\|_p^{p-1}}\Big(\|(d_{1,3},\rho_{1,3})\|_p + \|(d_{3,2},\rho_{3,2})\|_p - \tfrac{\delta_1\cdot\rho_{1,2}^{p-1}}{\|(d_{1,2}^{p-1},\rho_{1,2}^{p-1})\|_q}\Big)\|(d_{1,2}^{p-1},\rho_{1,2}^{p-1})\|_q\\
& = \|(d_{1,3},\rho_{1,3})\|_p + \|(d_{3,2},\rho_{3,2})\|_p - \tfrac{\delta_1\cdot \rho_{1,2}^{p-1}}{\|(d_{1,2}^{p-1},\rho_{1,2}^{p-1})\|_q}\\
& \leq d_{\MM}((x_1,y_1),(x_3,y_3)) + d_{\MM}((x_3,y_3),(x_2,y_2)) - \delta.
\end{split}\]
Hence, it suffices to consider the case when $\rho_{1,3}<\varepsilon\eta$. By \eqref{eq:presExtBigMin} we have
\[
d_{1,3}\geq (\varepsilon^p - \rho_{1,3}^p)^{1/p} > \varepsilon(1-\eta^p)^{1/p}.
\]
The following estimate will help us to finish the proof.
\begin{claim*}
We have $\max\{h(a,b)\colon a\geq \varepsilon(1-\eta^p)^{1/p},\;b\leq \varepsilon\eta\} = -\delta_2$.
\end{claim*}
\begin{proof}[Proof of the Claim] Given $s,c,d\geq 0$, we consider the function
\[f_{s,c,d}(t):=(t,s)\bullet (c,d) - \norm{(t,s)}_p\cdot \norm{(c,d)}_q,\quad t\geq 0.\]
Then computing the derivative we find out that the function $f_{s,c,d}$ is increasing on $(0,s\tfrac{c^{q/p}}{d^{q/p}})$ and decreasing on $(s\tfrac{c^{q/p}}{d^{q/p}},\infty)$. Pick some $b\leq \varepsilon\eta$ and consider the function $f_{s_0,c_0,d_0}$ with $s_0=b$, $c_0=d_{\NN_1}(x_1,x_2)^{p-1}$ and $d_0=d_{\NN_2}(y_1,y_2)^{p-1}$. Note that then for $a\geq \varepsilon(1-\eta^p)^{1/p}$ we have 
\[
a\geq \varepsilon(1-\eta^p)^{1/p}>\eta\varepsilon\frac{d_{\NN_1}(x_1,x_2)}{d_{\NN_2}(y_1,y_2)} \geq b\frac{d_{\NN_1}(x_1,x_2)}{d_{\NN_2}(y_1,y_2)} =  s_0\frac{c_0^{q/p}}{d_0^{q/p}}
\]
and therefore $h(a,b) = f_{s_0,c_0,d_0}(a)\leq f_{s_0,c_0,d_0}(\varepsilon(1-\eta^p)^{1/p}) = h(\varepsilon(1-\eta^p)^{1/p},b)$. Similarly, consider the function $f_{s_1,c_1,d_1}$ with $s_1=\varepsilon(1-\eta^p)^{1/p}$, $c_1=d_{\NN_2}(y_1,y_2)^{p-1}$ and $d_1=d_{\NN_1}(x_1,x_2)^{p-1}$. Then for $b\leq \eta\varepsilon$ we have
\[
b\leq \eta\varepsilon < \varepsilon(1-\eta^p)^{1/p}\frac{d_{\NN_2}(y_1,y_2)}{d_{\NN_1}(x_1,x_2)} = s_1\frac{c_1^{q/p}}{d_1^{q/p}}
\]
and therefore $h(\varepsilon(1-\eta^p)^{1/p},b) = f_{s_1,c_1,d_1}(b)\leq f_{s_1,c_1,d_1}(\varepsilon\eta) = h(\varepsilon(1-\eta^p)^{1/p},\varepsilon\eta)$, which finishes the proof of the claim.
\end{proof}
Now, the following computation finishes the proof
\[\begin{split}
d_\MM\big( & (x_1,y_1),(x_2,y_2)\big) = \tfrac{1}{\|(d_{1,2},\rho_{1,2})\|_p^{p-1}}\Big\|\Big(d_{1,2},\rho_{1,2}\Big)\Big\|_p^p
\\
&\leq \tfrac{1}{\|(d_{1,2},\rho_{1,2})\|_p^{p-1}}\Big((d_{1,3} + d_{3,2})d_{1,2}^{p-1} + (\rho_{1,3} + \rho_{3,2})\rho_{1,2}^{p-1}\Big)\\
& = \tfrac{1}{\|(d_{1,2},\rho_{1,2})\|_p^{p-1}}\Big(h(d_{1,3},\rho_{1,3}) + \|(d_{1,3},\rho_{1,3})\|_p\|(d_{1,2}^{p-1},\rho_{1,2}^{p-1})\|_q + (d_{3,2},\rho_{3,2})\bullet(d_{1,2}^{p-1},\rho_{1,2}^{p-1})\Big)\\
& \stackrel{\text{H\"older}}{\leq} \tfrac{1}{\|(d_{1,2},\rho_{1,2})\|_p^{p-1}}\Big(\frac{h(d_{1,3},\rho_{1,3})}{\|(d_{1,2}^{p-1},\rho_{1,2}^{p-1})\|_q} + \norm{(d_{1,3},\rho_{1,3})}_p + \norm{(d_{3,2},\rho_{3,2})}_p\Big)\|(d_{1,2}^{p-1},\rho_{1,2}^{p-1})\|_q\\
& = \frac{h(d_{1,3},\rho_{1,3})}{\|(d_{1,2}^{p-1},\rho_{1,2}^{p-1})\|_q} + \norm{(d_{1,3},\rho_{1,3})}_p + \norm{(d_{3,2},\rho_{3,2})}_p\\
& \stackrel{Claim}{\leq} \|(d_{1,3},\rho_{1,3})\|_p + \|(d_{3,2},\rho_{3,2})\|_p - \tfrac{\delta_2}{\|(d_{1,2}^{p-1},\rho_{1,2}^{p-1})\|_q}\\
& \leq d_{\MM}((x_1,y_1),(x_2,y_2)) + d_{\MM}((x_2,y_2),(x_2,y_2)) - \delta.
\end{split}\]
\end{proof}

\begin{corollary}\label{cor:sums}
    Let $\NN_1$ and $\NN_2$ be metric spaces with $|\NN_1|\geq 3$ such that $\NN_2$ is a weak Prague space and $\ver[\NN_2]=\NN_2$. Let $p\in (1,\infty)$. Then for the metric space $\MM:=\NN_1\oplus_p\NN_2$ we have that $\ver[\MM] = \MM$ and $\edg[\MM]$is $3$-connected. In particular, $\MM$  is Lipschitz-free rigid.

    Moreover, if $\NN_2$ is uniformly concave and does not contain isolated points, then $\MM$ is a Prague space.
\end{corollary}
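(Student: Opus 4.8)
The plan is to exploit Proposition~\ref{prop:pSumsExt} to produce a large, easily understood family of edges inside $\edg[\MM]$ and then to argue about $3$-connectedness using that family alone. Since $\ver[\NN_2]=\NN_2$, every $y\in\NN_2$ has some $y'$ with $(y,y')\in\edg[\NN_2]$, and Proposition~\ref{prop:pSumsExt} then gives
\[
E_0:=\big\{\big((x_1,y_1),(x_2,y_2)\big)\setsep x_1,x_2\in\NN_1,\ (y_1,y_2)\in\edg[\NN_2]\big\}\subseteq\edg[\MM].
\]
The crucial structural observation is that membership in $E_0$ depends only on the second coordinate: writing $G_2:=(\NN_2,\edg[\NN_2])$ (a connected graph, as $\NN_2$ is weak Prague), the graph $(\MM,E_0)$ is the \emph{blow-up} of $G_2$ in which each vertex $y$ is replaced by a fibre $F_y\cong\NN_1$ of size $|\NN_1|\geq 3$, there are no edges inside a fibre, and $F_{y_1},F_{y_2}$ are joined completely bipartitely exactly when $(y_1,y_2)\in\edg[\NN_2]$. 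Since $\ver[\NN_2]=\NN_2$, every $y$ has a neighbour in $G_2$, whence every point of $\MM$ is incident to some $E_0$-edge and $\ver[\MM]=\MM$.

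For $3$-connectedness I would first note that, because $E_0\subseteq\edg[\MM]$ and the two graphs share the vertex set $\MM$, it suffices to prove that $(\MM,E_0)$ is $3$-connected: adding the remaining edges of $\edg[\MM]$ can only improve connectivity. So I fix a set $S$ of at most two vertices and show that $(\MM,E_0)\setminus S$ is connected. The key point is that each fibre retains at least $|\NN_1|-2\geq 1$ vertices outside $S$, while adjacent fibres are completely bipartite. Thus, given $u=(a,y)$ and $v=(b,z)$ not in $S$, I take a path $y=w_0,w_1,\dots,w_k=z$ in $G_2$ and lift it to a walk $u=P_0,P_1,\dots,P_k=v$ by choosing each intermediate $P_i\in F_{w_i}\setminus S$ arbitrarily; consecutive $P_i,P_{i+1}$ lie in adjacent fibres and are therefore joined. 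If $y=z$ (so $u,v$ are non-adjacent in a common fibre) I instead route through any common neighbour in an adjacent fibre avoiding $S$, which exists for the same counting reason. Hence $(\MM,E_0)$, and so $\edg[\MM]$, is $3$-connected; as $\ver[\MM]=\MM$ is dense and $\edg[\MM]$ is connected, $\MM$ is a weak Prague space, and Theorem~\ref{thm:3ConnectedImpliesRigid} yields Lipschitz-free rigidity. I expect this combinatorial step to be the main (though not hard) one, with the hypothesis $|\NN_1|\geq 3$ being precisely what keeps every fibre nonempty after deleting two vertices.

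For the ``Moreover'' part I would use that uniform concavity of $\NN_2$ forces $\edg[\NN_2]=\{(y_1,y_2)\setsep y_1\neq y_2\}$, so $G_2$ is complete and $E_0$ contains \emph{every} edge between distinct fibres. It then remains to verify the admissibility identity \eqref{eq:pragueDistances} for $\edg[\MM]$. The bound $\inf\{\cdots\}\geq d_\MM(u,v)$ is automatic, since each edge length is a genuine distance and the triangle inequality bounds any path length below by $d_\MM(u,v)$. For the reverse bound, if the two endpoints lie in different fibres they are already joined by a single $E_0$-edge realising their distance exactly. If $u=(x_1,y)$ and $v=(x_2,y)$ lie in the same fibre, I use that $\NN_2$ has no isolated points to pick $z\neq y$ arbitrarily close to $y$ and take the two-edge path $(x_1,y)\to(x_1,z)\to(x_2,y)$, of length $d_{\NN_2}(y,z)+\|(d_{\NN_1}(x_1,x_2),d_{\NN_2}(y,z))\|_p$; letting $z\to y$ this tends to $d_{\NN_1}(x_1,x_2)=d_\MM(u,v)$. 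Therefore the infimum in \eqref{eq:pragueDistances} equals $d_\MM(u,v)$ for all pairs, $\edg[\MM]$ is admissible, and $\MM$ is a Prague space.
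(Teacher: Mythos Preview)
Your proposal is correct and follows essentially the same route as the paper: both use Proposition~\ref{prop:pSumsExt} to produce the set $E_0\subseteq\edg[\MM]$, deduce $\ver[\MM]=\MM$ and $3$-connectedness from the fibre structure of $E_0$ together with $|\NN_1|\geq 3$, apply Theorem~\ref{thm:3ConnectedImpliesRigid}, and for the ``Moreover'' part use the two-edge detour $(x_1,y)\to(x_1,z)\to(x_2,y)$ with $z\to y$. Your $3$-connectedness verification via deletion of two vertices is slightly more explicit than the paper's (which exhibits, for each $z'\in\NN_1$, an internally disjoint lifted path and invokes $|\NN_1|\geq 3$), but the content is the same.
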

\begin{proof}
    By Proposition~\ref{prop:pSumsExt} we have
    \[
    \edg[\MM]\supset \{\big((x,y),(x',y')\big)\colon (y,y')\in \edg[\NN_2],\;x,x'\in\MM\}.
    \]
    Thus, by the assumption we have that $\ver[\MM] \supset \NN_1\times \ver[\NN_2] = \NN_1\times \NN_2$. Since $\ver[\NN_2]$ is connected, given points $(x,y),(x',y')\in\MM$ and an $\ver[\NN_2]$-path $z_0,\ldots,z_n$ from $y$ to $y'$, for every $z'\in\NN_1$ we have that $(x,z_0),(z',z_1),\ldots,(z',z_{n-1}),(x',z_n)$ is $\edg[\MM]$-path from $(x,y)$ to $(x',y')$ and therefore since $|\NN_1|\geq 3$, $\edg[\MM]$ is $3$-connected. The ``In particular'' part follows from Theorem~\ref{thm:3ConnectedImpliesRigid}.

    Finally, in order to check the ``Moreover'' part, assume additionally that $\NN_2$ is uniformly concave, i.e. that $\edg[\NN_2]=\{(x,y)\colon x\neq y\in\NN_2\}$, and that $\NN_2$ does not contain isolated points. Pick $(x,y),(x',y')\in\MM$. If $y\neq y'$ then we have $(x,y),(x,y')\in\edg[\MM]$ and there is nothing to check. So we may suppose that $y=y'$. Given $\varepsilon>0$, since $y$ is not isolated, we find $y_\varepsilon\in\NN_2$ with $0<d_{\NN_2}(y,y_\varepsilon)<\varepsilon$. Then by the assumption, for the $\edg[\MM]$-path $(x,y),(x,y_\varepsilon),(x',y)$ from $(x,y)$ to $(x',y')$ we obtain
   \[\begin{split}
   d_{\MM}\big((x,y),(x,y_\varepsilon)\big) & + d_{\MM}\big((x,y_\varepsilon),(x',y)\big) = d_{\NN_2}(y,y_\varepsilon) + \Big(d_{\NN_1}(x,x')^p + d_{\NN_2}(y,y_\varepsilon)^p\Big)^{1/p}\\
   & \leq \varepsilon + \Big(d_{\NN_1}(x,x')^p + \varepsilon^p\Big)^{1/p}\to d_{\NN_1}(x,x') =  d_{\MM}\big((x,y),(x',y')\big).
   \end{split}\] 
\end{proof}

\begin{corollary}\label{cor:sumsMain}
    Let $\MM$ be a metric space with $|\MM|\geq 3$ and let $\NN$ be a uniformly concave metric space. Then $\MM\oplus_p\NN$ is Lipchitz-free rigid for every $p\in(1,\infty)$.
\end{corollary}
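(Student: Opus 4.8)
The plan is to deduce this statement as a direct application of Corollary~\ref{cor:sums}, so the main work is checking that a uniformly concave space $\NN$ satisfies the hypotheses required there. First I would recall, as noted immediately after the definition of uniformly concave spaces, that $\NN$ being uniformly concave is equivalent to $\edg[\NN] = \{(y,y')\colon y\neq y'\in\NN\}$. In particular every pair of distinct points forms an extreme edge, so every point of $\NN$ is incident to some edge of $\edg[\NN]$, giving $\ver[\NN] = \NN$. Moreover, since $\edg[\NN]$ contains all pairs of distinct points, the graph $(\ver[\NN],\edg[\NN])$ is the complete graph on $\NN$, hence connected (provided $|\NN|\geq 2$, which we may assume since otherwise $\MM\oplus_p\NN$ is just $\MM$ and the statement is vacuous or reduces to a trivial case), and its vertices are trivially dense in $\NN$. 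Thus $\NN$ is a weak Prague space with $\ver[\NN]=\NN$, exactly as required by Corollary~\ref{cor:sums}.

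With these verifications in hand, I would apply Corollary~\ref{cor:sums} with $\NN_1 = \MM$ and $\NN_2 = \NN$: the hypotheses $|\NN_1| = |\MM|\geq 3$, $\NN_2 = \NN$ a weak Prague space, and $\ver[\NN_2]=\NN_2$ are all met, and $p\in(1,\infty)$ is given. The conclusion of Corollary~\ref{cor:sums} then yields directly that $\MM\oplus_p\NN$ is Lipschitz-free rigid, which is precisely the assertion of Corollary~\ref{cor:sumsMain}.

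I do not anticipate a genuine obstacle here, as the entire content has been front-loaded into the preceding Proposition~\ref{prop:pSumsExt} and Corollary~\ref{cor:sums}; the only subtlety is the purely formal check that the uniform concavity hypothesis translates into the weak Prague plus $\ver[\NN]=\NN$ hypotheses of Corollary~\ref{cor:sums}, which is immediate from Fact~\ref{fact:extremePoints}. One minor point worth stating explicitly is the harmless assumption $|\NN|\geq 2$: if $\NN$ is a single point then $\MM\oplus_p\NN$ is isometric to $\MM$ and the claim carries no content, so the interesting case always has $\edg[\NN]$ nonempty and the connectedness of $(\ver[\NN],\edg[\NN])$ is automatic.

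\begin{proof}
Recall that, as observed after the definition of uniformly concave spaces, $\NN$ being uniformly concave is equivalent to $\edg[\NN] = \{(y,y')\colon y\neq y'\in\NN\}$. Consequently every point of $\NN$ is incident to an edge of $\edg[\NN]$, so $\ver[\NN] = \NN$, and (assuming as we may that $|\NN|\geq 2$, the case $|\NN|=1$ being trivial) the graph $(\ver[\NN],\edg[\NN])$ is the complete graph on $\NN$, hence connected with its vertices dense in $\NN$. Therefore $\NN$ is a weak Prague space satisfying $\ver[\NN]=\NN$. Applying Corollary~\ref{cor:sums} with $\NN_1 = \MM$ and $\NN_2 = \NN$, using $|\MM|\geq 3$ and $p\in(1,\infty)$, we conclude that $\MM\oplus_p\NN$ is Lipschitz-free rigid.
\end{proof}
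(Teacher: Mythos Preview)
Your approach is exactly what the paper intends: the corollary is stated without proof immediately after Corollary~\ref{cor:sums}, and your verification that a uniformly concave $\NN$ satisfies $\ver[\NN]=\NN$ and is a weak Prague space (via $\edg[\NN]$ being the complete graph) is precisely the missing routine check.

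One small correction to your handling of the degenerate case: if $|\NN|=1$ then $\MM\oplus_p\NN$ is isometric to $\MM$, but the claim is \emph{not} contentless---it would assert that an arbitrary metric space $\MM$ with $|\MM|\geq 3$ is Lipschitz-free rigid, which is false. So rather than calling this case trivial, you should either note that the statement tacitly requires $|\NN|\geq 2$ (which is the evident intent, and what the paper presumably means), or simply exclude it explicitly. This does not affect your main argument.
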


\subsection{Disjoint unions of metric spaces}
For two sets $A$ and $B$ we shall denote their disjoint union by $A\sqcup B$. Given two bounded metric spaces, we easily obtain the following.
\begin{proposition}\label{prop:unionBdd}
    Let $\MM$ and $\NN$ be bounded metric spaces with $\min\{|\MM|,|\NN|\}\geq 3$. Then there exists a metric on $\MM\sqcup \NN$ extending metrics on $\MM$ and $\NN$ respectively, such that $\MM\sqcup \NN$ is Lipschitz-free rigid.
\end{proposition}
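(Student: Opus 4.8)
The plan is to glue $\MM$ and $\NN$ together so far apart that \emph{every} pair consisting of one point of $\MM$ and one point of $\NN$ becomes a preserved extreme point; the resulting $\edg[\MM\sqcup\NN]$ will then contain a complete bipartite graph spanning all of $\MM\sqcup\NN$, which is $3$-connected precisely because both sides have at least three points, so that Theorem~\ref{thm:3ConnectedImpliesRigid} finishes the job. Concretely, I would fix a constant $D\ge \tfrac12\max\{\operatorname{diam}\MM,\operatorname{diam}\NN\}$ (this is the only place boundedness is used, to guarantee $D<\infty$) and define a metric on $\MM\sqcup\NN$ by keeping the original distances inside $\MM$ and inside $\NN$ and setting $d(x,y):=D$ whenever $x\in\MM$ and $y\in\NN$. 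The choice of $D$ makes the triangle inequality routine to check in the mixed configurations (the only nontrivial instances are $d(x,x')\le 2D$ for $x,x'\in\MM$ and its symmetric counterpart in $\NN$), and by construction this metric extends the two given ones.

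Next I would verify, using the characterization in Fact~\ref{fact:extremePoints}, that $m_{x,y}$ is a preserved extreme point for every $x\in\MM$ and $y\in\NN$. Given $\varepsilon>0$, put $\delta:=\varepsilon$ and take any $z$ with $\min\{d(x,z),d(z,y)\}\ge\varepsilon$. If $z\in\MM$ then $z\ne x$ and $d(z,y)=D$, whence $d(x,z)+d(z,y)=d(x,z)+D\ge \varepsilon+D=d(x,y)+\delta$; if $z\in\NN$ then symmetrically $d(x,z)=D$ and $d(x,z)+d(z,y)\ge D+\varepsilon=d(x,y)+\delta$. Thus the required inequality holds uniformly, so $(x,y)\in\edg[\MM\sqcup\NN]$. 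Consequently $\edg[\MM\sqcup\NN]$ contains all cross pairs, which in particular forces $\ver[\MM\sqcup\NN]=\MM\sqcup\NN$ (every point is an endpoint of some cross edge), so that $V_{\edg}$ is trivially dense and $\edg[\MM\sqcup\NN]$ is symmetric.

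Finally I would observe that the cross pairs alone form the complete bipartite graph on the full vertex set, and since $\min\{|\MM|,|\NN|\}\ge 3$ its vertex connectivity, equal to $\min\{|\MM|,|\NN|\}$, is at least $3$; as $\edg[\MM\sqcup\NN]$ is a supergraph of this bipartite graph on the same vertex set, it is again $3$-connected (adding edges on a fixed vertex set never lowers connectivity) and in particular connected, so $\MM\sqcup\NN$ is a weak Prague space with $\edg[\MM\sqcup\NN]$ $3$-connected. Theorem~\ref{thm:3ConnectedImpliesRigid} then yields Lipschitz-free rigidity. The only point demanding a little care is this last structural step: one must check that the possibly extra intra-$\MM$ and intra-$\NN$ extreme edges cannot destroy either the weak Prague property or the $3$-connectivity, which is exactly the monotonicity remark (more edges only help). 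I expect no genuine obstacle beyond correctly calibrating $D$ and invoking the connectivity of the complete bipartite graph.
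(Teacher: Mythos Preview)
Your proposal is correct and follows essentially the same approach as the paper: both set all cross distances equal to a single constant (the paper uses $1+\max\{\operatorname{diam}\MM,\operatorname{diam}\NN\}$, you allow any $D\ge \tfrac12\max\{\operatorname{diam}\MM,\operatorname{diam}\NN\}$), verify via Fact~\ref{fact:extremePoints} with $\delta=\varepsilon$ that every cross pair is a preserved extreme point, and conclude $3$-connectedness from the complete bipartite subgraph to invoke Theorem~\ref{thm:3ConnectedImpliesRigid}. Your write-up is in fact more explicit than the paper's about the triangle inequality and the connectivity argument.
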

\begin{proof}We extend the metric by putting for $x\in \MM$ and $y\in \NN$, $d(x,y):=1 + \max\{\operatorname{diam} \MM, \operatorname{diam} \NN\}$, it follows from Fact~\ref{fact:extremePoints}, by setting $\delta=\varepsilon$, that $(x,y)\in \edg[\MM\sqcup\NN]$. Thus, we have
\[
\edg[\MM\sqcup\NN]\supset \{(x,y)\colon x\in\MM,\;y\in\NN\}
\]
and so $\ver[\MM\sqcup\NN] = \MM\sqcup \NN$ and since $\min\{|\MM|,|\NN|\}\geq 3$, $\edg[\MM\sqcup\NN]$ is $3$-connected. Thus, by Theorem~\ref{thm:3ConnectedImpliesRigid} $\MM\sqcup\NN$ is Lipschitz-free rigid.
\end{proof}

In particular, any bounded metric space with at least three points may be isometrically embedded into a Lipschitz-free rigid space with only three more points. As we shall see now, this holds even for unbounded metric spaces, see Corollary~\ref{cor:unionsRigid}.

\begin{proposition}\label{prop:unionsRigid}
    Let $\MM$ and $\NN$ be metric spaces such that $|\NN|\geq 4$ and $\NN$ is uniformly concave. Then for any point $0_\NN\in\NN$ there is a metric on $\MM\sqcup\NN\setminus\{0_\NN\}$ extending the metrics on $\MM$ and $\NN\setminus\{0_\NN\}$ respectively, such that $\MM\sqcup \NN\setminus\{0_\NN\}$ is Lipschitz-free rigid.
\end{proposition}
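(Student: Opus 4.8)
The plan is to realize $\MM\sqcup(\NN\setminus\{0_\NN\})$ as a ``cross-shaped'' subspace of the $\ell_p$-sum $\MM\oplus_p\NN$, rather than gluing the two pieces at a single point. Fix any $p\in(1,\infty)$ (say $p=2$) and any base points $0_\MM\in\MM$, $0_\NN\in\NN$, and define the metric on $Z:=\MM\sqcup(\NN\setminus\{0_\NN\})$ as the one inherited from the embeddings $\MM\ni x\mapsto(x,0_\NN)$ and $\NN\setminus\{0_\NN\}\ni y\mapsto(0_\MM,y)$ into $\MM\oplus_p\NN$. Concretely this keeps $d_\MM$ on $\MM$ and $d_\NN$ on $\NN\setminus\{0_\NN\}$, while the cross distances become $d(x,y)=\norm{(d_\MM(x,0_\MM),d_\NN(0_\NN,y))}_p$; in particular it extends both metrics and adds exactly $|\NN|-1$ points. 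The point of using $\ell_p$ with $p>1$ rather than the naive one-point gluing (which is the case $p=1$) is that the latter produces an $\ell_1$-decomposition $\F(Z)\cong\F(\MM)\oplus_1\F(\NN)$ whose surjective isometries need not come from dilations once $\F(\MM)$ is floppy (e.g. $\MM=[0,1]$, where $\F(\MM)\cong L_1$); the strict convexity of the $\ell_p$-ball is exactly what forces every point of $\MM$ to become an endpoint of a preserved extreme edge.

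I would then identify $\edg[Z]$. The key step is to show that for every $x\in\MM$ and every $y\in\NN\setminus\{0_\NN\}$ the molecule $m_{(x,0_\NN),(0_\MM,y)}$ is a preserved extreme point of $B_{\F(Z)}$, and likewise $m_{(0_\MM,y),(0_\MM,y')}$ for distinct $y,y'$. For the full product $\MM\oplus_p\NN$ this is exactly Proposition~\ref{prop:pSumsExt}, since $\NN$ is uniformly concave and hence $m_{0_\NN,y}$ and $m_{y,y'}$ are preserved extreme in $\F(\NN)$. To transfer this to the subspace $Z$ I would use the elementary monotonicity of the criterion in Fact~\ref{fact:extremePoints}: that criterion quantifies over all points $z$ of the ambient space, so passing to a subspace (with the restricted metric) that still contains the two endpoints only weakens the requirement; thus being preserved extreme in $\F(\MM\oplus_p\NN)$ implies being preserved extreme in $\F(Z)$. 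Consequently $\edg[Z]$ contains every cross edge and every edge inside $\NN\setminus\{0_\NN\}$, so $\ver[Z]=Z$ and $Z$ is a weak Prague space.

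Finally I would check that $\edg[Z]$ is $3$-connected and invoke Theorem~\ref{thm:3ConnectedImpliesRigid}. The subgraph just produced contains the complete bipartite graph between the vertices coming from $\MM$ and the $|\NN|-1\geq 3$ vertices coming from $\NN\setminus\{0_\NN\}$, together with the complete graph on the latter $\NN$-side. Since the $\NN$-side is a clique of size at least $3$, each of whose vertices is joined to everything else, deleting any two vertices still leaves at least one $\NN$-vertex adjacent to all surviving vertices, so the graph stays connected; hence $\edg[Z]$ is $3$-connected (any further edges inside $\MM$ only help). Theorem~\ref{thm:3ConnectedImpliesRigid} then yields that $Z$ is Lipschitz-free rigid.

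The main obstacle is the key step of the second paragraph: establishing that the cross pairs are genuinely preserved extreme points despite $\MM$ possibly being unbounded, where an additive (one-point) gluing would fail. This is precisely where $p>1$ together with the uniform concavity of $\NN$ are indispensable, and it is fortunate that Proposition~\ref{prop:pSumsExt} already carries out the delicate uniform $\varepsilon$–$\delta$ estimate for the full product, so that only the soft subspace-monotonicity remark is needed to conclude. The remaining verifications—the metric axioms, which are inherited for free from the ambient $\ell_p$-sum, and the $3$-connectivity count—are routine.
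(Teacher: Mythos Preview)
Your proposal is correct and follows essentially the same approach as the paper: embed $\MM\sqcup(\NN\setminus\{0_\NN\})$ as the ``cross'' inside $\MM\oplus_p\NN$ for some $p\in(1,\infty)$, use Proposition~\ref{prop:pSumsExt} plus the subspace monotonicity of the preserved-extreme criterion to see that all cross edges and all $\NN$-side edges lie in $\edg$, and conclude $3$-connectedness from $|\NN\setminus\{0_\NN\}|\geq 3$ before invoking Theorem~\ref{thm:3ConnectedImpliesRigid}. Your write-up is in fact more explicit than the paper's about why the subspace passage is harmless and why $3$-connectivity holds.
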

\begin{proof}
Fix any $p\in(1,\infty)$ and two points $0_\MM\in\MM$ and $0_\NN\in\NN$. Then there are isometric embeddings $\iota_\MM:\MM\to \MM\oplus_p\NN$, resp. $\iota_\NN:\NN\to\MM\oplus_p\NN$ defined by $x\in\MM\mapsto (x,0_\NN)\in \MM\oplus_p\NN$, resp. $y\in\NN\to (0_\MM,y)\in\MM\oplus_p\NN$. Identifying $\MM\sqcup\NN\setminus\{0_\NN\}$ with the disjoint union $\iota_\MM[\MM]\cup\iota_\NN[\NN\setminus\{0_\NN\}]\subseteq\MM\oplus_p\NN$ we obtain a metric on $\MM\sqcup\NN\setminus\{0_\NN\}$ inherited from $\MM\oplus_p\NN$ that extends the respective metrics on $\MM$, resp. $\NN\setminus\{0_\NN\}$. The extension, for $x\in\MM$ and $y\in\NN\setminus\{0_\NN\}$, can be explicitly given by the formula  
\[d(x,y)=\big(d_\MM^p(x,0_\MM) + d_{\NN'}^p(0_\NN,y)\big)^{1/p}.\]

By Proposition~\ref{prop:pSumsExt} and the assumption, we get for all $y\neq y'\in\NN\setminus\{0_\NN\}$ that $m_{\iota_\NN(y),\iota_\NN(y')}$ is a preserved extreme point in $B_{\F(\MM\oplus_p\NN)}$, so it is also a preserved extreme point in its subspace $B_{\F(\MM\sqcup \NN\setminus\{0_\NN\})}$. Analogously the same holds for every pair $x\in\MM$ and $y\in\NN\setminus\{0_\NN\}$.

Thus, we have $\ver[\MM\sqcup\NN\setminus\{0_\NN\}] = \MM\sqcup \NN\setminus\{0_\NN\}$ and $\edg[\MM\sqcup \NN\setminus\{0_\NN\}]$ is $3$-connected, because $|\NN\setminus\{0_\NN\}|\geq 3$. By Theorem~\ref{thm:3ConnectedImpliesRigid}, $\MM\sqcup\NN\setminus\{0_\NN\}$ is Lipschitz-free rigid.
\end{proof}

\begin{corollary}\label{cor:unionsRigid}Any metric space may be isometrically embedded into a Lipschitz-free rigid space with only three more points.
\end{corollary}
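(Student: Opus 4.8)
The plan is to reduce the statement directly to Proposition~\ref{prop:unionsRigid}. That proposition takes an arbitrary metric space $\MM$ together with any uniformly concave space $\NN$ satisfying $|\NN|\geq 4$, and produces a Lipschitz-free rigid metric on $\MM\sqcup\NN\setminus\{0_\NN\}$ that restricts to the original metric on $\MM$. Since this procedure adjoins exactly $|\NN|-1$ new points to $\MM$, to obtain the claimed bound of three new points it suffices to supply a uniformly concave space $\NN$ with exactly four points.

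First I would exhibit such an $\NN$. Take $\NN:=\{a,b,c,d\}$ equipped with the metric assigning distance $1$ to every pair of distinct points. For any distinct $x,y\in\NN$ and any $z\in\NN\setminus\{x,y\}$ we have $d(x,z)+d(z,y)=2>1=d(x,y)$. Since $\NN$ is finite, for fixed $x\neq y$ and a given $\varepsilon>0$ the constraint $\min\{d(x,z),d(y,z)\}\geq\varepsilon$ excludes, for $\varepsilon$ small, only $z\in\{x,y\}$; hence one may take $\delta$ to be, say, half the minimum of the finitely many positive numbers $d(x,z)+d(z,y)-d(x,y)$, which verifies uniform concavity (here $\delta=\tfrac12$ works). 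Thus $\NN$ is a uniformly concave metric space of cardinality four.

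Finally I would apply Proposition~\ref{prop:unionsRigid} to this $\MM$ and $\NN$, fixing any base point $0_\NN\in\NN$. The proposition yields a metric on $\MM\sqcup\NN\setminus\{0_\NN\}$ which restricts to the original metric on $\MM$ (so $\MM$ embeds isometrically via $\iota_\MM$) and which makes $\MM\sqcup\NN\setminus\{0_\NN\}$ Lipschitz-free rigid. Because $|\NN\setminus\{0_\NN\}|=3$, the resulting space has exactly three points more than $\MM$, which is precisely the assertion.

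I do not expect any genuine obstacle here: beyond invoking Proposition~\ref{prop:unionsRigid}, the only content is the trivial verification that a four-point uniformly concave space exists and the observation, already contained in the proof of Proposition~\ref{prop:unionsRigid}, that $\iota_\MM$ is isometric. (The degenerate case of empty $\MM$ is vacuous and may be dispatched separately if one wishes to include it.)
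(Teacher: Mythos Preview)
Your proposal is correct and is exactly the argument the paper intends: the corollary is stated without proof immediately after Proposition~\ref{prop:unionsRigid}, and the intended deduction is precisely to apply that proposition with a four-point uniformly concave space (any finite metric space with $[x,y]=\{x,y\}$ for all $x\neq y$ works, in particular your equilateral example). Your handling of the trivial empty case is also appropriate.
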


\section{Description of isometry groups of Lipschitz-free spaces over graphs}\label{sec:graphsIsometries}

Let us start this section with the following observation describing completely the isometry group $\LIso(\F(\MM))$ for Lipschitz-free rigid spaces $\MM$.

\begin{proposition}\label{prop:rigidLIso}
Let $\MM$ be a Lipschitz-free rigid metric space with $|\MM|\geq 3$. Let us denote by $\operatorname{Dil}(\MM)$ the group of all the dilations endowed with the topology of pointwise convergence. For each $g\in \operatorname{Dil}(\MM)$ let $T_g\in\LIso(\F(\MM))$ denote the unique isometry satisfying $T(m_{x,y}) = m_{g(x),g(y)}$ for $x,y\in\MM$, $x\neq y$. Then the mapping $\{-1,1\}\times \operatorname{Dil}(\MM)\ni (\varepsilon,g)\mapsto \varepsilon T_{g}\in \LIso(\F(\MM))$ is an isomorphism of topological groups.
\end{proposition}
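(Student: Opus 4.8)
The plan is to show that $\Phi\colon(\varepsilon,g)\mapsto\varepsilon T_g$ is a bijective group homomorphism which is moreover a homeomorphism, where I take the strong operator topology on $\LIso(\F(\MM))$ (the standard choice), so that a net $S_i\to S$ iff $S_i\xi\to S\xi$ for every $\xi\in\F(\MM)$. First I would record the algebraic facts. Composition of surjective dilations is again a surjective dilation and such maps are bijective, so $\operatorname{Dil}(\MM)$ is a group under composition. The preliminaries guarantee that every surjective $a$-dilation $g$ extends to $T_g\in\LIso(\F(\MM))$ with $T_g(m_{x,y})=m_{g(x),g(y)}$, and since $\Span\{m_{x,y}\}=\F_0(\MM)$ is dense, $T_g$ is the \emph{unique} such isometry. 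Evaluating on molecules gives $T_{g\circ h}(m_{x,y})=m_{gh(x),gh(y)}=T_gT_h(m_{x,y})$, so $g\mapsto T_g$ is a homomorphism; as $-\Id$ is central, $\Phi$ is a homomorphism as well.

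For bijectivity, surjectivity is precisely the definition of Lipschitz-free rigidity: any $T\in\LIso(\F(\MM))$ agrees with $\varepsilon T_g$ on the dense set $\{m_{x,y}\}$ for suitable $\varepsilon,g$, hence $T=\Phi(\varepsilon,g)$. For injectivity, suppose $\varepsilon T_g=\varepsilon'T_{g'}$; evaluating at $m_{x,y}$ yields $\varepsilon m_{g(x),g(y)}=\varepsilon'm_{g'(x),g'(y)}$. If $\varepsilon=\varepsilon'$ then $g(x)=g'(x)$ for all $x$, so $g=g'$. If $\varepsilon=-\varepsilon'$ then $g(x)=g'(y)$ and $g(y)=g'(x)$ for all $x\neq y$; choosing three distinct points $x,y,z$ (this is where $|\MM|\geq 3$ enters) gives $g'(y)=g(x)=g'(z)$, contradicting injectivity of $g'$. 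Thus $\Phi$ is a bijection.

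Continuity of $\Phi$ reduces, by the uniform bound $\norm{T_g}=1$ and a standard $\varepsilon/3$ estimate, to checking $T_{g_i}(m_{x,y})\to T_g(m_{x,y})$ on the dense set of molecules whenever $g_i\to g$ pointwise. Here I would use that $\delta\colon\MM\to\F(\MM)$ is an isometric embedding, so $(u,v)\mapsto m_{u,v}=\tfrac{\delta(u)-\delta(v)}{d(u,v)}$ is continuous off the diagonal; since $g_i(x)\to g(x)$, $g_i(y)\to g(y)$ and $g(x)\neq g(y)$, we obtain $m_{g_i(x),g_i(y)}\to m_{g(x),g(y)}$, as needed.

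The crux is the continuity of $\Phi^{-1}$, and the main obstacle is the following rigidity lemma: if $m_{u_i,v_i}\to m_{u_0,v_0}$ in $\F(\MM)$, then $u_i\to u_0$ and $v_i\to v_0$. I would prove it by testing against the $1$-Lipschitz bump functions $\psi_\rho(t)=(\rho-d(t,u_0))^+$ for $0<\rho<r:=d(u_0,v_0)$: norm (hence weak) convergence gives $\tfrac{\psi_\rho(u_i)-\psi_\rho(v_i)}{d(u_i,v_i)}\to\psi_\rho(m_{u_0,v_0})=\rho/r>0$, so eventually $\psi_\rho(u_i)>\psi_\rho(v_i)\geq 0$, i.e. $d(u_i,u_0)<\rho$; letting $\rho\downarrow 0$ yields $u_i\to u_0$, and centering the bump at $v_0$ gives $v_i\to v_0$. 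Granting this, if $T_i=\varepsilon_iT_{g_i}\to T=\varepsilon T_g$, then for each pair the normalized molecule $\varepsilon_im_{g_i(x),g_i(y)}$ converges to $\varepsilon m_{g(x),g(y)}$, so its endpoints converge; a three-point argument as in the injectivity step forces $\varepsilon_i=\varepsilon$ eventually, and then $g_i(x)\to g(x)$ for every $x$, i.e. $(\varepsilon_i,g_i)\to(\varepsilon,g)$. Since $\LIso(\F(\MM))$ with the strong operator topology is a topological group, the bijective bicontinuous homomorphism $\Phi$ is then an isomorphism of topological groups.
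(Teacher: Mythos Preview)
Your proof is correct and follows essentially the same route as the paper's: both establish the homomorphism on molecules, use the three-point argument for injectivity and for ruling out a sign flip along a subnet, and deduce continuity in both directions from the behaviour of normalized molecules. The only notable difference is that where the paper cites \cite[Lemma 2.2]{GLPPRZ18} for the implication ``$m_{u_i,v_i}\to m_{u_0,v_0}$ forces $u_i\to u_0$ and $v_i\to v_0$'', you supply a short self-contained proof via the bump functions $\psi_\rho$; the paper also streamlines slightly by checking continuity only at the identity, whereas you check it at a general point.
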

\begin{proof}Since $\MM$ is Lipschitz-free rigid, the mapping is surjective. The fact that it is group homomorphism is easy and left to the reader. Further, in order to see the mapping is one-to-one, since we already know it is homomorphism it suffices to observe that whenever $\varepsilon T_g = Id$ then $\varepsilon = 1$ and $g = Id$. Indeed, we have $\varepsilon m_{g(x),g(y)} = m_{x,y}$ for every $x,y\in \MM$ with $x\neq y$ and so either $\varepsilon = 1$ in which case we obtain $g = Id$; or $\varepsilon = -1$ in which case we obtain $g(x) = y$ and $g(y) = x$ for every $x,y\in \MM$ with $x\neq y$ which is easily seen not to be possible whenever $\MM$ consists of at least three points.

Thus, we have checked that the mapping is isomorphism of groups and in order to check it is homeomorphism, it suffices to check that both the mapping and its inverse are continuous at the identity. In order to check continuity, we note that given a pointwise convergent net $(\varepsilon_i,g_i)\to (1,Id)$, we have eventually $\varepsilon_i = 1$ and $T_{g_i}(m_{x,y})\to m_{x,y}$ for every normalized elementary molecule $m_{x,y}$, which implies that $T_{g_i}\to Id$ pointwise (we are using the general and well-known fact that if a bounded net $S_i$ of operators between Banach spaces is pointwise convergent on a linearly dense set, then it is pointwise convergent on the whole domain as well).

Finally, in order to check continuity of the inverse, assume that there is a net $(\varepsilon_i,g_i)$ (where $g_i$ are $a_i$-dilations) satisfying $\varepsilon_i m_{g_i(x),g_i(y)} \to m_{x,y}$ for every $x,y\in \MM$ with $x\neq y$. In order to get a contradiction, assume that $\varepsilon_i$ is not eventually equal to $1$, so passing to a subnet we have $m_{g_i(x),g_i(y)} \to - m_{x,y} = m_{y,x}$, which using \cite[Lemma 2.2]{GLPPRZ18} implies that $g_i(x)\to y$ and $g_i(y)\to x$ for every $x,y\in\MM$ with $x\neq y$, which is however not possible whenever $\MM$ consists of at least three points, a contradiction. Thus, $\varepsilon_i$ is eventually equal to $1$ and so we have $m_{g_i(x),g_i(y)} \to m_{x,y}$ and using \cite[Lemma 2.2]{GLPPRZ18} we obtain $g_i\to Id$ pointwise and we are done.
\end{proof}

It is not true that every connected graph is Lipschitz-free rigid. Nevertheless, the linear isometry group of a Lipschitz-free space over any connected graph still admits precise description that will be provided in this section. Let us start with a general observation on the linear isometry groups of weak Prague spaces that shows that these groups are \emph{light} in the sense of \cite{Meg01}.

\begin{proposition}\label{prop:light}
Let $\MM$ be a weak Prague space. Then on the linear isometry group $\mathrm{LIso}(\F(\MM))$ the WOT topology coincides with the SOT topology.
\end{proposition}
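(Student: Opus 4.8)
The SOT is finer than the WOT, so it suffices to prove that every net $(T_\alpha)$ in $\LIso(\F(\MM))$ converging to $T$ in the WOT also converges to $T$ in the SOT. The plan is to reduce this to a statement about weak convergence of normalized elementary molecules and then exploit the rigidity of such convergence at preserved extreme points.

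First I would record two standard reductions. Since every $T_\alpha$ has norm $1$ and $\Span\{m_e\colon e\in\edg\}$ is dense in $\F(\MM)$ (because $\edg$ is weakly admissible), the uniformly bounded net $(T_\alpha)$ converges to $T$ in the SOT if and only if $T_\alpha(m_e)\to T(m_e)$ in norm for every $e\in\edg$. On the other hand, WOT convergence gives in particular that $T_\alpha(m_e)\to T(m_e)$ weakly for every $e\in\edg$. Moreover, by Proposition~\ref{prop:bijectionEext} each $T_\alpha$ induces a symmetric bijection $\sigma_\alpha\colon\edg\to\edg$ with $T_\alpha(m_e)=m_{\sigma_\alpha(e)}$, and likewise $T$ induces $\sigma$ with $T(m_e)=m_{\sigma(e)}$; in particular all the vectors $T_\alpha(m_e)$ and their weak limit $T(m_e)=m_{\sigma(e)}$ are normalized elementary molecules that are preserved extreme points of $B_{\F(\MM)}$.

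Thus the whole statement reduces to the following claim: if a net of normalized molecules $(m_{\sigma_\alpha(e)})$ converges weakly to a preserved extreme molecule $m_{\sigma(e)}$, then it converges in norm. This is where I expect the real work to sit, and it is the point at which I would invoke \cite[Lemma 2.2]{GLPPRZ18}: weak convergence of molecules to a preserved extreme point molecule forces the endpoints to converge in the metric, i.e.\ writing $\sigma_\alpha(e)=(u_\alpha,v_\alpha)$ and $\sigma(e)=(u,v)$ one gets $u_\alpha\to u$ and $v_\alpha\to v$. Conceptually this is the statement that a preserved extreme molecule is a denting point of $B_{\F(\MM)}$, hence a point of weak-to-norm continuity; the quantitative engine behind it is the uniform $\varepsilon$-$\delta$ separation condition recorded in Fact~\ref{fact:extremePoints}.

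Once endpoint convergence is available, the conclusion is routine: since $\delta\colon\MM\to\F(\MM)$ is an isometric embedding we have $\delta_{u_\alpha}\to\delta_u$ and $\delta_{v_\alpha}\to\delta_v$ in norm, while $d(u_\alpha,v_\alpha)\to d(u,v)>0$, so
\[
m_{\sigma_\alpha(e)}=\frac{\delta_{u_\alpha}-\delta_{v_\alpha}}{d(u_\alpha,v_\alpha)}\longrightarrow \frac{\delta_u-\delta_v}{d(u,v)}=m_{\sigma(e)}
\]
in norm. Feeding this back through the two reductions yields $T_\alpha\to T$ in the SOT, as desired. The main obstacle is genuinely the weak-to-norm rigidity at preserved extreme points; the one subtlety I would double-check is that \cite[Lemma 2.2]{GLPPRZ18} (or at least its proof) applies to arbitrary nets and not merely to sequences, since the coincidence of the WOT and SOT must be verified against nets.
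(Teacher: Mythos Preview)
Your proposal is correct and follows essentially the same route as the paper: reduce SOT convergence to norm convergence on the linearly dense set $\{m_e : e\in\edg\}$ via uniform boundedness, observe that WOT convergence yields weak convergence of the molecules $m_{\sigma_\alpha(e)}\to m_{\sigma(e)}$, and then invoke \cite[Lemma~2.2]{GLPPRZ18} to upgrade this to norm convergence. The only cosmetic differences are that the paper cites Theorem~\ref{thm:1-1correspondencePrague} rather than Proposition~\ref{prop:bijectionEext} (either works here), and the paper states the conclusion of the cited lemma directly as ``weak and norm topologies coincide on molecules'' rather than passing explicitly through endpoint convergence as you do; your flagging of the net-versus-sequence issue is a point the paper leaves implicit.
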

\begin{proof}It is obvious that SOT convergence implies WOT convergence. Let us prove the converse. Let $(T_i)$ be a net in $\LIso(\F(\MM))$ which converges to $T\in \LIso(\F(\MM))$ in WOT topology. Let $\sigma_i$ and $\sigma$ be as given in Theorem~\ref{thm:1-1correspondencePrague}, that is, bijections on $\edg$ satisfying \ref{it:generalCond}, $T_i(m_e) = m_{\sigma_i(e)}$ and $T(m_e) = m_{\sigma(e)}$ for every $e\in\edg$. Then for every $e\in\edg$ we have that $m_{\sigma_i(e)} = T_i(m_e)\stackrel{w}{\to} T(m_e) = m_{\sigma(e)}$, but since weak and norm topologies coincide on molecules, see e.g. \cite[Lemma 2.2]{GLPPRZ18}, this implies that $T_i(m_e)\stackrel{\norm{\cdot}}{\to} T(m_e)$, $e\in\edg$. Since $\edg$ is weakly admissible the set $\{m_e\colon e\in\edg\}$ is linearly norm-dense in $\F(\MM)$, this implies that $T_i\to T$ in SOT topology.
\end{proof}

\begin{remark}In \cite{AFGR} the authors were interested in finding non/examples of \emph{light} Banach spaces, that is, Banach spaces $X$ for which SOT and WOT topology coincide on $\LIso(X)$. Proposition~\ref{prop:light} gives us an essentially new class of light Banach spaces. For example, since it is well-known that for $I=[0,1]$, $\F(I)$ is isometric to $L_1 = L_1(I)$, by Example~\ref{ex:rigidNot2Connected} we have that $L_1$ embeds isometrically as a hyperplane into a light Banach space (namely, into the Banach space $\F(I\cup \{x_0\})$, where $I\cup \{x_0\}$ is as in Example~\ref{ex:rigidNot2Connected}). This seems to be interesting since it is known that $L_1$ is not light, see \cite[Proposition 5.2]{AFGR}.
\end{remark}

\begin{proposition}\label{prop:SigmaLIso}
Let $\MM$ be a weak Prague (resp. Prague) metric space. Let $\Sigma$ be the topological group of all permutations of $\edg$ satisfying \ref{it:wA1}, \ref{it:wA2}, and \ref{it:wA3} (resp. \ref{it:wA1} and \ref{it:wA2}) with the topology of pointwise convergence, where $\edg$ has the topology inherited from $\F(\MM)$ (after identifying each $e\in \edg$ with $m_e\in \F(\MM)$). Then $\Sigma$ and $\LIso{\F(\MM)}$ are topologically isomorphic.
\end{proposition}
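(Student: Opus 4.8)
The plan is to show that the assignment $\sigma\mapsto T_\sigma$ furnished by Theorem~\ref{thm:1-1correspondencePrague} is at the same time a group isomorphism and a homeomorphism, where $\LIso(\F(\MM))$ is equipped with the SOT topology (equivalently, by Proposition~\ref{prop:light}, with the WOT topology). First I would record that the map $\Phi\colon\Sigma\to\LIso(\F(\MM))$, $\Phi(\sigma):=T_\sigma$, is a well-defined bijection: this is exactly the content of Theorem~\ref{thm:1-1correspondencePrague}\ref{it:weakPragueIsometries} in the weak Prague case and of Theorem~\ref{thm:1-1correspondencePrague}\ref{it:pragueIsometries} in the Prague case, since the conditions defining $\Sigma$ match the respective lists of conditions in those statements.

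Next I would verify that $\Phi$ is a group homomorphism. For $\sigma,\tau\in\Sigma$ and $e\in\edg$ one has $T_{\sigma\circ\tau}(m_e)=m_{\sigma(\tau(e))}=T_\sigma(m_{\tau(e)})=(T_\sigma\circ T_\tau)(m_e)$, so the two isometries $T_{\sigma\circ\tau}$ and $T_\sigma\circ T_\tau$ agree on $\{m_e\colon e\in\edg\}$; since this set is linearly dense in $\F(\MM)$ by weak admissibility, they coincide, whence $\Phi(\sigma\circ\tau)=\Phi(\sigma)\,\Phi(\tau)$.

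The heart of the matter is that $\Phi$ is a homeomorphism. Continuity of $\Phi^{-1}$ is immediate: if $T_{\sigma_i}\to T_\sigma$ in SOT, then evaluating at a fixed molecule gives $m_{\sigma_i(e)}=T_{\sigma_i}(m_e)\to T_\sigma(m_e)=m_{\sigma(e)}$ in norm for each $e\in\edg$, which is precisely convergence $\sigma_i\to\sigma$ in the pointwise topology of $\Sigma$ (recall $\edg$ carries the topology inherited from $\F(\MM)$ through $e\mapsto m_e$, and on molecules norm and weak convergence coincide by \cite[Lemma 2.2]{GLPPRZ18}). For continuity of $\Phi$ I would argue in the other direction: if $\sigma_i\to\sigma$ in $\Sigma$, then $T_{\sigma_i}(m_e)\to T_\sigma(m_e)$ for every $e\in\edg$, so by linearity $T_{\sigma_i}(x)\to T_\sigma(x)$ for every $x$ in the dense linear span of $\{m_e\colon e\in\edg\}$; since each $T_{\sigma_i}$ is an isometry the net $(T_{\sigma_i})$ is uniformly bounded, and the standard $\varepsilon/3$ argument upgrades this to $T_{\sigma_i}(x)\to T_\sigma(x)$ for all $x\in\F(\MM)$, i.e. $T_{\sigma_i}\to T_\sigma$ in SOT. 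This is the same density-plus-uniform-boundedness principle already invoked in Proposition~\ref{prop:rigidLIso}.

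Finally, since $\LIso(\F(\MM))$ is a topological group in SOT and $\Phi$ is an isomorphism of groups and a homeomorphism, the group operations of $\Sigma$ are automatically continuous, so $\Sigma$ is a topological group topologically isomorphic to $\LIso(\F(\MM))$. The only genuinely delicate step is the forward continuity of $\Phi$, where one must use the uniform boundedness of the net of isometries to pass from pointwise convergence on the spanning molecules to SOT convergence on the whole space; every other step is a direct transcription of Theorem~\ref{thm:1-1correspondencePrague} and the coincidence of weak and norm convergence on molecules.
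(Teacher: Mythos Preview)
Your proof is correct and follows essentially the same approach as the paper: establish the bijection via Theorem~\ref{thm:1-1correspondencePrague}, check the homomorphism property on the linearly dense set $\{m_e\colon e\in\edg\}$, and verify bicontinuity using that pointwise convergence of isometries on a linearly dense set implies SOT convergence. Your write-up is in fact slightly more detailed than the paper's (you spell out the homomorphism verification and the $\varepsilon/3$ argument, and you observe that continuity of the group operations on $\Sigma$ follows automatically), but the underlying argument is identical.
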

\begin{proof}
By Theorem~\ref{thm:1-1correspondencePrague}, the map $\sigma\in \Sigma\to T_\sigma\in\mathrm{LIso}(\F(\MM)$ is a bijection. It is straightforward that it is also a group homomorphism. So it suffices to check that it is a homeomorphism. Let $(\sigma_i)_{i\in I}\subseteq \Sigma$ be a net pointwise converging to $\sigma\in \Sigma$. Notice that for every $e\in\edg$ we then have $T_{\sigma_i}(m_e)$ converges in norm to $T_\sigma(m_e)$ and since $\{m_e\colon e\in\edg\}$ is linearly norm-dense in $\F(\MM)$, we get that $(T_{\sigma_i})_{i\in I}$ converges in SOT to $T_\sigma$. Conversely, let $(\sigma_i)_{i\in I}\subseteq\Sigma$ be a net such that $(T_{\sigma_i})_{i\in I}$ converges in SOT to $T_\sigma$. Fix $e\in \edg$. We have that $T_{\sigma_i}(m_e)$ converges in norm to $T_\sigma(m_e)$, thus by definition $\sigma_i(e)\to\sigma(e)$ in $\tau$, so $(\sigma_i)_{i\in I}$ converges to $\sigma$ pointwise.
\end{proof}

For the purpose of the following proposition we recall several notions. First, for a collection $(X_i)_{i\in I}$ of Banach spaces and $p\in[1,\infty]$ we denote by $(\bigoplus_{i\in I} X_i)_p$ the $\ell_p$-sum of Banach spaces $(X_i)_{i\in I}$. We shall also use the following.

\begin{definition}\label{def:wreath}
    Suppose we are given a sequence of topological groups $(G_i)_{i\in I}$ indexed by some set $I$ and a topological group $H$ acting by permutations on $I$. We shall write $h(i)$ for the action of $h\in H$ on $i\in I$. Assume that for each orbit $I'\subseteq I$ of $H$ on $I$  the groups $(G_r)_{r\in I'}$ are topologically isomorphic via some fixed isomorphisms $\phi_{r_1,r_2}:G_{r_1}\to G_{r_2}$ such that $\phi_{r_2,r_3}\circ\phi_{r_1,r_2}=\phi_{r_1,r_3}$, for $r_1,r_2,r_3\in I'$. We define the \emph{(unrestricted) wreath product $\prod_{i\in I} G_i\wr H$} to be the topological group which is as a topological space the direct product $\big(\prod_{i\in I} G_i\big)\times H$ and multiplication is defined by the rule $\big((g_i)_{i\in I},\pi\big)\cdot\big((h_i)_{i\in I},\pi'\big)=\big((g_i \phi_{\pi^{-1}(i),i}(h_{\pi^{-1}(i)}))_{i\in I},\pi\circ \pi'\big)$.\\
    (It is well-known and easy to check that such an operation really well-defines a group operation on the set $\big(\prod_{i\in I} G_i\big)\times H$.)
\end{definition}

\begin{remark}
Wreath products are sometimes defined under more stringent assumptions that the action of $H$ on $I$ is transitive and that all the groups $(G_i)_{i\in I}$ are equal instead of just being isomorphic; we refer e.g. to \cite[Chapter 8]{Wreathbook} for an introduction to wreath products of groups. It is clear that the second requirement is just a notational issue. For the first, notice that if $(I_j)_{j\in J}$ is an enumeration of the orbits of the action of $H$ on $I$ and we denote by $W_j$, for $j\in J$, the wreath product $\prod_{i\in I_j} G_i\wr H$, then the full wreath product $\prod_{i\in I} G_i\wr H$ is (topologically) isomorphic to the direct product $\prod_{j\in J} W_j$.
\end{remark}

\begin{proposition}\label{prop:wlog2ConnectedPrague}
    Let $\MM$ be a Prague space and $(E_i)_{i\in I}$ be the decomposition of $\edg[\MM]$ into edge components. Then the following holds.
    \begin{enumerate}[label=(\roman*)]
        \item\label{it:pragueComponents} For every $i\in I$ we have that $V_{E_i}:=\bigcup E_i\subset \MM$ is Prague space and $\edg[V_{E_i}]=E_i$ is $2$-connected;
        \item\label{it:isometricDecompositionIntoBiconnected} $\F(\MM)\equiv \Big(\bigoplus_{i\in I} \F(V_{E_i})\Big)_1$ isometrically;
        \item\label{it:wlogBiconnected} Let us denote by $S_I$ the group of all the bijections $\pi:I\to I$ such that for every $i\in I$ we have that $\F(V_{E_i})$ and $\F(V_{E_{\pi(i)}})$ are linearly isometric \footnote{For the canonical choice of mappings $\phi_{i,j}:\LIso(\F(V_{E_i}))\to \LIso(\F(V_{E_j}))$ for $i,j$ such that $\F(V_{E_i})$ and $\F(V_{E_j})$ are isometric we refer the reader to the proof below.}
        Then $\LIso(\F(M))$ is algebraically isomorphic to the wreath product $\prod_{i\in I}\LIso(\F(V_{E_i}))\wr S_I$.

        Moreover, if the topology on $\edg[\MM]\subset \F(\MM)$ is discrete, then this algebraic isomorphism is also a topological homeomorphism.
    \end{enumerate}
\end{proposition}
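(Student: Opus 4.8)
The plan is to exploit that the edge components $(E_i)_{i\in I}$ are precisely the blocks (maximal $2$-connected pieces together with single-edge bridges) of the graph $(\ver[\MM],\edg[\MM])$, and to turn the combinatorial block-cut structure into metric information via admissibility of $\MM$. The geometric engine I will isolate first is \emph{cut-vertex additivity}: if a vertex $c$ separates $x$ from $y$ in $(\ver,\edg)$, i.e. every $\edg$-path from $x$ to $y$ passes through $c$, then by admissibility (eq.~\eqref{eq:pragueDistances}) every length-minimizing edge-path splits at $c$, whence $d(x,y)=d(x,c)+d(c,y)$, that is $c\in[x,y]$. I combine this with the standard block-cut fact that two distinct blocks share at most one vertex (otherwise two paths joining two common vertices, one through each block, yield a simple cycle meeting both, forcing the blocks to coincide). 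Thus for any $z\in\ver\setminus V_{E_i}$ there is a unique \emph{gateway} cut vertex $c\in V_{E_i}$ with $d(v,z)=d(v,c)+d(c,z)$ for all $v\in V_{E_i}$. This passage from the combinatorial to the metric picture is where the real content lies.

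For \ref{it:pragueComponents}: since all edges of one component pairwise lie on a common simple cycle, Fact~\ref{fact:edgesConnected} gives that $(V_{E_i},E_i)$ is $2$-connected; moreover $E_i\subseteq\edg[V_{E_i}]$ because the preserved-extreme inequality of Fact~\ref{fact:extremePoints} only gets easier to satisfy when the ambient space shrinks. Admissibility of $V_{E_i}$ follows because any near-optimal $\edg$-path for $d(x,y)$ can be shortened to an $E_i$-path: every excursion outside $V_{E_i}$ leaves and re-enters through one gateway vertex and can be deleted without increasing length, by cut-vertex additivity; hence $V_{E_i}$ is Prague. The delicate inclusion is $\edg[V_{E_i}]\subseteq E_i$. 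Given $(x,y)\in\edg[V_{E_i}]$ I first upgrade it to $(x,y)\in\edg[\MM]$: fix $\varepsilon>0$, let $\delta$ witness preserved-extremeness inside $V_{E_i}$ at parameter $\varepsilon/2$, and (reducing to $z\in\ver$ by density) test $z\notin V_{E_i}$ through its gateway $c$. If $d(c,z)\ge\varepsilon/2$ then $d(x,z)+d(y,z)\ge d(x,y)+\varepsilon$ immediately; if $d(c,z)<\varepsilon/2$ then $\min\{d(x,c),d(y,c)\}>\varepsilon/2$, so the $V_{E_i}$-inequality at the test point $c$ transfers the uniform gap $\delta$ via $d(x,z)+d(y,z)=d(x,c)+d(y,c)+2d(c,z)$; the cases $c\in\{x,y\}$ are direct. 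Thus $(x,y)\in\edg[\MM]$ lies in some component $E_j$, and since $x,y\in V_{E_i}\cap V_{E_j}$ are two common vertices we get $i=j$, so $(x,y)\in E_i$.

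For \ref{it:isometricDecompositionIntoBiconnected}: each inclusion $V_{E_i}\hookrightarrow\MM$ induces an isometric embedding $\F(V_{E_i})\hookrightarrow\F(\MM)$ (McShane extension), so the summing map $\Phi:\big(\bigoplus_{i\in I}\F(V_{E_i})\big)_1\to\F(\MM)$ has norm $\le 1$ and dense range (the $m_e$, $e\in\edg$, are covered). To see $\Phi$ is isometric it suffices, for finitely supported $(u_i)$, to produce a single $1$-Lipschitz $f$ on $\ver$ with $f\big(\sum_i u_i\big)\ge\sum_i\|u_i\|-\varepsilon$. I choose $1$-Lipschitz $f_i$ on $V_{E_i}$ norming $u_i$ and glue them along the block-cut tree: rooting the tree and proceeding outward, I set $f$ on each new block equal to $f_i$ shifted by the constant making it agree at the gateway cut vertex. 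Within a block $f$ is $1$-Lipschitz by construction, and across blocks by cut-vertex additivity ($|f(a)-f(b)|\le d(a,c)+d(c,b)=d(a,b)$). Since each $u_i$ has zero total mass the shifts do not affect $f(u_i)=f_i(u_i)$, yielding the estimate and hence $\|\Phi((u_i))\|=\sum_i\|u_i\|$; an isometry with dense range is onto.

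For \ref{it:wlogBiconnected}: by Theorem~\ref{thm:1-1correspondencePrague} a linear isometry $T$ corresponds to a bijection $\sigma:\edg\to\edg$ satisfying \ref{it:wA1} and \ref{it:wA2}; as $\sigma$ preserves simple cycles it preserves the relation $\sim$, hence permutes components, giving $\pi_\sigma\in\mathrm{Sym}(I)$ with $\sigma(E_i)=E_{\pi_\sigma(i)}$. Each restriction $\sigma|_{E_i}$ is then a cycle- and ratio-preserving bijection between the Prague spaces $V_{E_i}$ and $V_{E_{\pi_\sigma(i)}}$, so (Theorem~\ref{thm:1-1correspondencePrague} again) induces a linear isometry $\F(V_{E_i})\to\F(V_{E_{\pi_\sigma(i)}})$; in particular $\pi_\sigma\in S_I$. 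Conversely, any $\pi\in S_I$ with isometries on the blocks assembles into a global $\sigma=\bigsqcup_i\sigma_i$ satisfying \ref{it:wA1} and \ref{it:wA2} (cycles live in single components). Fixing coherent reference isometries $U_{i,j}$ between isometric blocks and setting $\phi_{i,j}(g)=U_{i,j}gU_{i,j}^{-1}$, the assignment $T\mapsto((g_i)_i,\pi_\sigma)$ is a bijection onto $\prod_{i\in I}\LIso(\F(V_{E_i}))\wr S_I$, and a direct computation of $\sigma\circ\sigma'$ shows it intertwines composition with the multiplication rule of Definition~\ref{def:wreath}. When $\edg\subset\F(\MM)$ is discrete, Proposition~\ref{prop:SigmaLIso} identifies both sides with permutation groups under pointwise convergence, and decomposing $\sigma$ by components shows that $\sigma_\alpha\to\sigma$ pointwise on $\edg$ is equivalent to $\pi_{\sigma_\alpha}\to\pi_\sigma$ together with $g_i^\alpha\to g_i$ for each $i$, i.e. to convergence in the product topology of the wreath product; hence the isomorphism is a homeomorphism. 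I expect the main obstacle to be the geometric part of the first two paragraphs—proving cut-vertex additivity and deploying it both to exclude spurious extreme points in \ref{it:pragueComponents} and to glue Lipschitz functions in \ref{it:isometricDecompositionIntoBiconnected}—whereas, granting \ref{it:isometricDecompositionIntoBiconnected} and Theorem~\ref{thm:1-1correspondencePrague}, the wreath-product identification in \ref{it:wlogBiconnected} is essentially bookkeeping.
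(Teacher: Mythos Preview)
Your proof is correct and follows essentially the same approach as the paper: cut-vertex additivity from admissibility drives everything, the gateway-vertex argument handles the delicate inclusion $\edg[V_{E_i}]\subseteq E_i$, and the block--cut tree organizes both the $\ell_1$-decomposition in \ref{it:isometricDecompositionIntoBiconnected} and the wreath-product bookkeeping in \ref{it:wlogBiconnected}. The only noteworthy difference is in \ref{it:isometricDecompositionIntoBiconnected}: the paper works on the dual side, building a $w^*$-$w^*$ continuous surjective isometry $\Lip_0(\ver)\to\big(\bigoplus_i\Lip_0(V_{E_i})\big)_\infty$ via restriction and then passing to the preadjoint, whereas you define the summing map on the predual side directly and norm it by gluing Lipschitz functions along the tree---but the gluing construction itself (shifting each $f_i$ by a constant determined recursively along the block--cut tree) is identical to the paper's surjectivity argument, so the two routes share the same core.
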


\begin{proof}\ref{it:pragueComponents}: Pick $i\in I$. It is easy to observe that $\edg[V_{E_i}] \supset E_i$, so we have $\ver[V_{E_i}] = V_{E_i}$ and $\edg[V_{E_i}]$ is $2$-connected. Moreover in the graph $\edg[\MM]$, for every simple edge path from $x\in V_{E_i}$ to $y\in V_{E_i}$ we have that all the edges in the path are from $E_i$ and since $\MM$ is Prague space, the condition \eqref{eq:pragueDistances} holds in the space $V_{E_i}$ and so $V_{E_i}$ is indeed a Prague space. It remains to prove that $\edg[V_{E_i}]\subset E_i$. Pick $(x,y)\in \edg[V_{E_i}]$. Since $E_i$ is an edge component, we have that $(x,y)\in \edg[\MM]$ implies $(x,y)\in E_i$. Thus, to conclude it suffices to prove that $(x,y)\in \edg[\MM]$. In order to get a contradiction, suppose this is not the case.

Then there exists $\varepsilon>0$ such that for every $\delta\in (0,\tfrac{\varepsilon}{2})$ there is $z_\delta\in \ver[\MM]\setminus V_{E_i}$ with $\min\{d(x,z_\delta),d(y,z_\delta)\} > \varepsilon$ and $d(x,y) >  d(x,z_\delta) + d(y,z_\delta)- \delta$. Since $z_\delta\notin V_{E_i}$, there exists a unique vertex $z_\delta'\in V_{E_i}$ such that for every $v\in V_{E_i}$ and every $\ver[\MM]$-path $x_0,\ldots,x_n$ from $z_\delta$ to $v$ we have $z_\delta'\in \{x_1,\ldots,x_n\}$. Thus, using that $\MM$ is Prague (we use condition \eqref{eq:pragueDistances}) we have $d(x,z_\delta) = d(x,z_\delta') + d(z_\delta',z_\delta)$ and $d(y,z_\delta) = d(y,z_\delta') + d(z_\delta',z_\delta)$. But then we obtain
\[\begin{split}
d(x,y) & > d(x,z_\delta) + d(y,z_\delta) - \delta =  d(x,z'_\delta) + d(y,z'_\delta) + 2d(z_\delta',z_\delta) - \delta\\
& > d(x,z'_\delta) + d(y,z'_\delta) - \delta.
\end{split}\]
 Moreover, by the above we also have
 \[
 d(z_\delta',z_\delta) < \frac{\delta + d(x,y) -  d(x,z'_\delta) - d(y,z'_\delta)}{2}\leq \frac{\delta}{2},
 \]
 which implies 
 \[
 \min\{d(x,z_\delta'),d(y,z_\delta')\} = \min\{d(x,z_\delta),d(y,z_\delta)\} - d(z_\delta',z_\delta)\geq \varepsilon - \tfrac{\delta}{2} \geq \tfrac{\varepsilon}{2}.
 \]
But since $\delta>0$ was arbitrary and $z_\delta'\in V_{E_i}$, this contradicts the fact that $(x,y)\in \edg[V_{E_i}]$.

\medskip

\noindent\ref{it:isometricDecompositionIntoBiconnected}: Put $E_I:=\{\{i,j\}\colon i,j\in I,\;i\neq j,\; V_{E_i}\cap V_{E_j}\neq \emptyset\}$ and note that the graph $T=(I,E_I)$ is a tree, that is, for any $i,j\in I$, $i\neq j$ there is a unique $E_I$-path from $i$ to $j$. Moreover, for every $\{i,j\}\in E_I$ there exists a unique vertex $v_{i,j}\in V_{E_i}\cap V_{E_j}$. Thus, for any $x\in V_{E_i}$ and $y\in V_{E_j}$ and the unique $E_I$-path $i_1,\ldots,i_n$ from $i$ to $j$ we have that any $\edg[\MM]$-path from $x$ to $y$ passes through vertices $v_{i_{1},i_2},v_{i_2,i_3},\ldots,v_{i_{n-1},i_n}$, and using that $\MM$ is Prague (we use condition \eqref{eq:pragueDistances}) we obtain that
\[d(x,y) = d(x,v_{i_1}) + \sum_{k=1}^{n-1} d(v_{i_{k}},v_{i_{k+1}}) + d(v_{i_n},y).\]
and for any $f\in\Lip_0(\ver[\MM])$
\[\begin{split}
|f(x)-f(y)| & \leq |f(x)-f(v_{i_1})| + \sum_{k=1}^{n-1} |f(v_{i_{k}}) - f(v_{i_{k+1}})| + |f(v_{i_n})-f(y)|\\
& \leq \sup_{i\in I} \norm{f|_{V_{E_i}}}_{\Lip}\Big(d(x,v_{i_1}) + \sum_{k=1}^{n-1} d(v_{i_{k}},v_{i_{k+1}}) + d(v_{i_n},y)\Big)\\&  = \sup_{i\in I} \norm{f|_{V_{E_i}}}_{\Lip} d(x,y),
\end{split}\]
so we have that $\norm{f}_{\Lip} = \sup_{i\in I} \norm{f|_{V_{E_i}}}_{\Lip}$ and the mapping $\Lip_0(\ver[\MM])\ni f\mapsto \Phi(f):=(f|_{V_{E_i}} - f(0_{V_{E_i}}))_{i\in I}\in \Big(\bigoplus_{i\in I} \Lip_0(V_{E_i})\Big)_\infty$ is an isometry, which is $w^*$-$w^*$ continuous, which follows from the Banach-Diedonn\'e theorem together with the fact that whenever $f_\gamma\stackrel{w^*}{\to} f$ is a bounded net, then $f_\gamma\to f$ pointwise and therefore for every $i\in I$ we obtain that $f_\gamma|_{V_{E_i}} - f_\gamma(0_{V_{E_i}})\to f|_{V_{E_i}} - f(0_{V_{E_i}})$ pointwise (equivalently, in the $w^*$-topology of $\Lip_0(V_{E_i})$). Moreover, this isometry $\Phi$ is surjective.

Indeed, pick $i_0$ with $0_\MM\in V_{E_{i_0}}$, put $c_{i_0}:=0$ and, for any $i\in I\setminus\{i_0\}$ we let \[c_i:=\sum_{k=1}^n (f_{i_{k-1}}(v_{i_{k-1},i_k})-f_{i_k}(v_{i_{k-1},i_k})),\] where $i_0,\ldots,i_n$ is the $E_I$-path from $i_0$ to $i$. Given $(f_i)_{i\in I}\in \Big(\bigoplus_{i\in I} \Lip_0(V_{E_i})\Big)_\infty$, we define $f:\MM\to \Rea$ by  $f(x):=f_i(x) + c_i$ for $x\in V_{E_{i}}$. This is a well-defined mapping because for $x\in V_{E_{i}}\cap V_{E_j}$ we have $x = v_{i,j}$, so we may without loss of generality assume that the $E_I$-path from $i_0$ to $i$ and $j$ is $i_0,\ldots,i_m,i$ and $i_0,\ldots,i_m,i,j$ respectively, which implies that $c_j = c_i + f_{i}(v_{i,j})-f_{j}(v_{i,j})$ and so
\[f_i(v_{i,j}) + c_i = f_i(v_{i,j}) + c_j + f_{j}(v_{i,j}) - f_{i}(v_{i,j}) = f_j(v_{i,j}) + c_j.\]
Since $f|_{V_{E_i}}$ is Lipschitz for every $i\in I$, by the above we have that $f\in \Lip_0(\MM)$ and moreover $\Phi(f) = (f_i)_{i\in I}$, because for $i\in I$ and $x\in V_{E_i}$ we have $f(x) - f(0_{V_{E_i}}) = f_i(x) - f_i(0_{V_{E_i}}) = f_i(x)$.

Thus, the mapping $\Phi$ defined above is adjoint of an isometry between $\Big(\bigoplus_{i\in I} \F(V_{E_i})\Big)_1$ and $\F(\ver[\MM])$, which using that $\ver[\MM]$ is dense in $\MM$ proves \ref{it:isometricDecompositionIntoBiconnected}.

\medskip

\noindent\ref{it:wlogBiconnected}: Let $\mathcal{E}$ be the decomposition of $I$ into equivalence class where $i,j\in I$ are equivalent if $\F(V_{E_i})\equiv \F(V_{E_j})$. Choose a representative $i_E\in E\subseteq I$ for each $E\in\mathcal{E}$ and for each $j\in E$ fix a linear isometry $T_{i_E,j}:\F(V_{i_E})\to \F(V_j)$. Then for every $E\in\mathcal{E}$ and $i,j\in E$ set $T_{i,j}$ to be the identity if $i=j$ and otherwise set $T_{i,j}:=T_{i_E,j}\circ T^{-1}_{i_E,i}$. Notice that $T_{i,k}=T_{j,k}\circ T_{i,j}$ for all $i,j,k\in E\in\mathcal{E}$. Finally, for each $i,j\in E\in\mathcal{E}$ we define $\phi_{i,j}:\LIso(\F(V_i))\to\LIso(\F(V_j))$ by $g\in\LIso(\F(V_i))\mapsto T_{i,j}\circ g\circ T_{j,i}$.

Let us denote by $\Sigma$, as in Proposition~\ref{prop:SigmaLIso}, the topological group of all bijections $\sigma:\edg[\MM]\to\edg[\MM]$ satisfying conditions \ref{it:wA1} and \ref{it:wA2}. By Proposition~\ref{prop:SigmaLIso}, we have a topological group isomorphism between $\Sigma$ and $\LIso(\F(\MM))$ given by the map $\sigma\in\Sigma\to T_\sigma\in\LIso(\F(\MM))$. Note that given $\sigma\in\Sigma$ there exists $\pi\in S_I$ such that we have $\sigma(E_i)=E_{\pi(i)}$, $i\in I$ and so we may define the mapping $\Omega:\Sigma\to \prod_{i\in I} \LIso\big(\F(V_{E_i})\big)\wr S_I$ by \[
\Omega(\sigma):=\Big(\big(T_{\sigma|_{E_{\pi^{-1}(i)}}}\circ T_{i,\pi^{-1}(i)}\big)_{i\in I},\pi\Big).
\]
We easily observe that $\Omega$ is one-to-one. Further, $\Omega$ is surjective because given $((T_i)_{i\in I},\pi)\in \prod_{i\in I} \LIso\big(\F(V_{E_i})\big)\wr S_I$ we have that $T_{\pi(i)}\circ (T_{i,\pi(i)}): \F(V_{E_i})\to \F(V_{E_{\pi(i)}})$ is linear isometry and so by the already proved part \ref{it:pragueComponents} and by Proposition~\ref{thm:1-1correspondencePrague} we find bijections $\sigma_i:E_i\to E_{\pi(i)}$, $i\in I$ satisfying \ref{it:wA1} and \ref{it:wA2} with $T_{\sigma_i} = T_{\pi(i)}\circ (T_{i,\pi(i)})$ and now we easily observe that for $\sigma:=\bigcup_{i\in I} \sigma_i:E\to E$ we have $\Omega(\sigma) = ((T_i)_{i\in I},\pi)$. Thus $\Omega$ is a bijection.

Let us check that $\Omega$ is a homomorphism. Fix $\sigma_1,\sigma_2\in\Sigma$. We have
\[\begin{split}
\Omega(\sigma_1)\Omega(\sigma_2) &=\Big(\big((T_{\sigma_1\upharpoonright E_{\pi_1^{-1}(i)}}\circ T_{i,\pi_1^{-1}(i)}\big)_{i\in I},\pi_1\Big) \Big(\big((T_{\sigma_2\upharpoonright E_{\pi_2^{-1}(i)}}\circ T_{i,\pi_2^{-1}(i)}\big)_{i\in I},\pi_2\Big)\\
&= \Big(\big((T_{\sigma_1\upharpoonright E_{\pi_1^{-1}(i)}}\circ T_{i,\pi_1^{-1}(i)}\circ \phi_{\pi_1^{-1}(i),i}(T_{\sigma_2 \upharpoonright E_{\pi_2^{-1}\pi_1^{-1}(i)}}\circ T_{\pi_1^{-1}(i),\pi_2^{-1}\pi_1^{-1}(i)})\big)_{i\in I},\pi_1\pi_2\Big)\\
& = \Big(\big(T_{\sigma_1\upharpoonright E_{\pi_1^{-1}(i)}}\circ T_{i,\pi_1^{-1}(i)}\circ T_{\pi_1^{-1}(i),i}\circ T_{\sigma_2 \upharpoonright E_{\pi_2^{-1}\pi_1^{-1}(i)}}\circ T_{\pi_1^{-1}(i),\pi_2^{-1}\pi_1^{-1}(i)}\circ T_{i,\pi_1^{-1}(i)}\big)_{i\in I},\pi_1\pi_2\Big)\\
&=\Big(\big(T_{\sigma_1\upharpoonright E_{\pi_1^{-1}(i)}}\circ T_{\sigma_2 \upharpoonright E_{\pi_2^{-1}\pi_1^{-1}(i)}}\circ T_{i,\pi_2^{-1}\pi_1^{-1}(i)}\big),\pi_1\pi_2\Big)\\ &= \Big(\big(T_{\sigma_1\sigma_2 \upharpoonright E_{\pi_2^{-1}\pi_1^{-1}(i)}} \circ T_{i,\pi_2^{-1}\pi_1^{-1}(i)}\big),\pi_1\pi_2\Big)\\ &=\Omega(\sigma_1\sigma_2).
\end{split}\]

Finally, if $\edg[\MM]$ is discrete topological space then for any convergent net $(\sigma_\gamma)$ in $\Sigma$ we have that the corresponding permutations $(\pi_\gamma)$ are pointwise eventually constant and using this observation it is easy to observe that then $\Omega$ is even a topological homeomorphism.
\end{proof}

In the rest of this section we shall completely describe the group of linear isometries of a Lipschitz-free space over an arbitrary undirected graph viewed as a metric space with graph metric. We note that this description could be extended to weighted graphs or even to more general Prague metric spaces, however notationally it would become cumbersome, so for simplicity we stick to the graph case and leave the possible generalizations to the reader.\medskip

Let $(M,E)$ be an undirected graph. Our aim is to describe the linear isometry group of $\FF((M,E))$ (or more simply just $\F(M)$). Since there is a bijection between undirected graphs and directed graphs, where for each edge there is also its inverse, we shall in fact assume that $M$ is directed and for every $e\in E$ there is also $-e\in E$, its inverse. Next we claim that we may without loss of generality assume that $M$ is $2$-connected. Indeed, this follows from Proposition~\ref{prop:wlog2ConnectedPrague}, Lemma~\ref{lem:Eext=E} and the next lemma.

\begin{lemma}\label{lem:graphs}
Let $(M,E)$ be a directed graph as in the paragraph above. Then $\edg[M]$ is a discrete subset of $\F(M)$.
\end{lemma}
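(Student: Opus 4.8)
The plan is to prove the stronger statement that $\{m_e : e\in\edg[M]\}$ is \emph{uniformly} discrete in $\F(M)$, i.e. that any two distinct members are at distance at least $1$; discreteness is then immediate. The first step is the elementary observation that, for the graph metric, every $e\in\edg[M]$ has length $d(e)=1$. Indeed, if $x,y$ are vertices with $d(x,y)\geq 2$, then a geodesic from $x$ to $y$ passes through an intermediate vertex $z\notin\{x,y\}$, so $z\in[x,y]$ and $m_{x,y}$ fails to be an extreme point of $B_{\F(M)}$ by Fact~\ref{fact:extremePoints}; hence $\edg[M]\subseteq E$ (in fact $\edg[M]=E$ by Lemma~\ref{lem:Eext=E}). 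Consequently $m_e=\delta_{s(e)}-\delta_{r(e)}$ for every $e\in\edg[M]$, and the problem reduces to bounding $\|m_e-m_{e'}\|$ from below for distinct $e,e'\in\edg[M]$.

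Next I would fix distinct $e,e'\in\edg[M]$, write $a=s(e)$, $b=r(e)$, $c=s(e')$, $d=r(e')$, so that $m_e-m_{e'}=\delta_a-\delta_b-\delta_c+\delta_d$, and argue by a short case analysis on how many endpoints coincide, always using that $\|\delta_p-\delta_q\|=d(p,q)\geq 1$ for distinct vertices $p,q$. When $e$ and $e'$ share exactly one endpoint the difference collapses to either $\pm(\delta_p-\delta_q)$, of norm $d(p,q)\geq 1$, or to a molecule of the form $\pm(2\delta_p-\delta_q-\delta_r)$; in the latter case testing against the $1$-Lipschitz function $-d(\cdot,p)$ gives $\|m_e-m_{e'}\|\geq d(p,q)+d(p,r)\geq 2$. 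The degenerate two-shared-endpoint case is $e'=-e$, where $m_e-m_{e'}=2m_e$ has norm $2$.

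The only genuine four-point case, and the one I expect to be the main (though very mild) obstacle, is when $a,b,c,d$ are pairwise distinct: here the difference is supported on four points and collapses to nothing, so one must exhibit a good test function. A single choice settles it: the map $g:=d(\cdot,\{b,c\})=\min\{d(\cdot,b),d(\cdot,c)\}$ is $1$-Lipschitz, vanishes at $b$ and $c$, and satisfies $g(a),g(d)\geq 1$ since $a,d\notin\{b,c\}$; therefore $\|m_e-m_{e'}\|\geq g(m_e-m_{e'})=g(a)+g(d)\geq 2$. Collecting all cases yields $\|m_e-m_{e'}\|\geq 1$ for every pair of distinct $e,e'\in\edg[M]$, so $\{m_e : e\in\edg[M]\}$ is uniformly discrete, which is precisely the assertion.
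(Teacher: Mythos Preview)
Your proof is correct and reaches the same conclusion as the paper (that $\{m_e : e\in\edg[M]\}$ is $1$-separated), but by a different route. The paper invokes an external formula for $\|m_{x,y}-m_{x',y'}\|$ valid whenever this quantity is strictly below $2$: since graph edges have unit length, the formula collapses to $d(x,x')+d(y,y')$, which is $\geq 1$ because $(x,y)\neq(x',y')$ forces at least one coordinate to differ. You instead do a short case analysis on shared endpoints and exhibit explicit $1$-Lipschitz witnesses ($-d(\cdot,p)$ and $d(\cdot,\{b,c\})$) in each case. Your approach is entirely self-contained and avoids the external citation, at the price of a few more lines; the paper's is a one-line application of a black box. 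Both are perfectly adequate for this easy lemma.
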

\begin{proof}
First recall that $\edg[M]=E$ by Lemma~\ref{lem:Eext=E}. Now for two distinct $(x,y),(x',y')\in E$, using \cite[Lemma 1.2]{V23} and supposing that $\|m_{x,y} - m_{x',y'}\| < 2$ we have
\[\begin{split}
\|m_{x,y} - m_{x',y'}\| & = \|m_{x,y} + m_{y',x'}\| = \frac{d(x,x') + d(y,y') + |d(x,y)-d(x',y')|}{\min\{d(x,y),d(x',y')\}}\\
& = d(x,x') + d(y,y')\geq 1,
\end{split}\]
so when we identify $\edg[M]$ with a subset of $\F(M)$ we obtain that $\edg[M]$ is $1$-separated and therefore discrete.

\end{proof}

Thus, from now on we assume that our graph $(M,E)$ is $2$-connected. By Proposition~\ref{prop:bijectionEext}, every linear bijective isometry $T:\F(M)\to\F(M)$ induces a symmetric bijection $\sigma:E\to E$ satisfying \ref{it:wA1} (notice that the condition \ref{it:wA2} is trivial in the case of graphs; in fact $d(e)=d(\sigma(e))=1$ for all $e\in E$). Let $B$ be the Boolean algebra generated by simple directed cycles consisting of at least 3 edges. $\sigma$ induces also a Boolean algebra isomorphism $\pi:B\to B$. Call the atoms of $B$ \emph{pieces}. That is, each piece is an intersection of simple directed cycles and $\pi$ maps pieces to pieces. Moreover, for each piece $p$ we also have its inverse $-p$ and $\pi(-p)=-\pi(p)$.\medskip

We define a bipartite graph $(V,F)$ where $V=V_1\coprod V_2$ and a labelling $\lambda$ of $V_1$ by natural numbers. The set of vertices $V$ is a disjoint union $V_1\coprod V_2$, where $V_1$ is the set of all (directed) pieces, $V_2$ is the set of all simple (directed) cycles consisting of at least 3 edges and for $v\in V_1$ and $w\in V_2$ there is an edge $f\in F$ if $v\subseteq w$, i.e. the piece $v$ is a part of the simple cycle $w$. Notice that $(V,F)$ has exactly two connected components. The label $\lambda(v)$ of a piece $v$ is its size $|v|$. Notice that if there is an edge $f\in F$ between $p\in V_1$ and $c\in V_2$ there is also an edge (which may be denoted by $-f$) between $-p\in V_1$ and $-c\in V_2$, and $\lambda(-p)=\lambda(p)$.

Notice that $E=\bigcup_{p\in V_1} p$. For each $p\in V_1$ we enumerate the edges of $p$ as $\{e_1^p,\ldots,e_{|p|}^p\}$ in such a way that $-e_i^p=e_i^{-p}$, for every $p\in V_1$ and $i\leq |p|$. We define an equivalence relation $\sim$ on $V_1$, where $p\sim q$ if and only if $q=-p$. We denote the equivalence class of $p$ by $[p]$ and the set of equivalence classes by $[V_1]$. Similarly, the equivalence relation $\sim$ extends naturally to $V_2$ and $F$.

Set \[S:=\prod_{[p]\in [V_1]} S_{|p|},\] where for each $[p]\in [V_1]$, $S_{|p|}$ is the group of permutations of $\{1,\ldots,|p|\}$. 

Denote also by $([V],[F],\lambda)$ the bipartite graph with $[V]$ as the set of vertices and where there is an edge from $[F]$ between $[p_1],[p_2]\in [V]$ if and only if there is an edge from $F$ between $p_1$ and $p_2$ or between $p_1$ and $-p_2$. Notice that this is well-defined since the set of edges $F$ is also symmetric. Notice that $([V],[F])$ is connected. The piece labelling $\lambda$ of $V_1$ induces a piece labelling of $[V_1]$ since it is also symmetric.

Each automorphism $[\psi]$ of $([V],[F],\lambda)$ leaves $[V_1]$ invariant, thus acts on $[V_1]$  by permutations. By this we can consider a wreath product $S\wr\mathrm{Aut}([V],[F],\lambda)$ as in Definition~\ref{def:wreath}, where we note that for $[p],[q]\in [V_1]$ lying in the same orbit of $\mathrm{Aut}([V],[F],\lambda)$ we have $|p| = |q|$ and therefore we may in the definition of the wreath product take $\phi_{[p],[q]}:S_{|p|}\to S_{|q|}$ to be the identity.

\begin{proposition}
Given a $2$-connected undirected graph $M$, there is a topological group isomorphism $\Omega:\mathrm{LIso}(\F(M))\to\big(S\wr\mathrm{Aut}([V],[F],\lambda)\big)\times \{1,-1\}$.
\end{proposition}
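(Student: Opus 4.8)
The plan is to push the entire statement down to the level of the combinatorial group $\Sigma$ and then decompose $\Sigma$ by hand. By Lemma~\ref{lem:Eext=E} the graph $M$ is a Prague space with $\edg[M]=E$, and since all edge weights equal $1$ the condition \ref{it:wA2} is vacuous; hence Proposition~\ref{prop:SigmaLIso} furnishes a topological group isomorphism between $\LIso(\F(M))$ and the group $\Sigma$ of all symmetric bijections $\sigma\colon E\to E$ satisfying \ref{it:wA1}, i.e. the symmetric simple-(directed-)cycle-preserving bijections of $E$, equipped with the topology of pointwise convergence. By Lemma~\ref{lem:graphs} the set $E$ is discrete in $\F(M)$, so this is the product topology with $E$ discrete. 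It therefore suffices to build a topological isomorphism $\Omega\colon\Sigma\to\big(S\wr\mathrm{Aut}([V],[F],\lambda)\big)\times\{1,-1\}$ and to identify the three coordinates of $\Omega(\sigma)$.

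First I would isolate the sign. The global flip $\iota\colon e\mapsto -e$ lies in $\Sigma$, since it sends every simple directed cycle $C$ to the simple directed cycle $-C$; it has order $2$ and is central, because for $\sigma\in\Sigma$ one has $\sigma(\iota(e))=\sigma(-e)=-\sigma(e)=\iota(\sigma(e))$ by symmetry of $\sigma$. I would then define $\varepsilon_\sigma\in\{1,-1\}$ after fixing a reference orientation (a representative directed piece) for each class in $[V_1]$: the bijection $\sigma$ carries a representative directed piece $p$ to $\pm$(the chosen representative of its image class), and I claim this sign does not depend on $p$. This is precisely where $2$-connectedness is used: by Fact~\ref{fact:edgesConnected} any two edges, hence any two pieces, lie on a common simple directed cycle, and on each such cycle the orientations forced by \ref{it:wA1} propagate the same sign from one piece to the next; as $([V],[F])$ is connected the sign is globally constant. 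Granting this, $\varepsilon\colon\Sigma\to\{1,-1\}$ is a homomorphism whose kernel $\Sigma_0$ (the orientation-preserving elements) satisfies $\Sigma=\Sigma_0\times\langle\iota\rangle$, because $\iota$ is central, of order two, and has sign $-1$.

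Next I would describe $\Sigma_0$. Any $\sigma\in\Sigma$ permutes the simple directed cycles of length $\ge 3$ and so induces a Boolean-algebra automorphism $\pi$ of $B$ carrying atoms to atoms; thus $\pi$ permutes the directed pieces, preserves their sizes (as $\sigma$ restricts to a bijection of each piece onto its image), and commutes with the inverse involution, so it descends to an automorphism $[\psi_\sigma]$ of the labelled bipartite graph $([V],[F],\lambda)$. Reading the restriction of $\sigma$ to a single piece $p$ through the fixed enumerations $\{e^p_1,\dots,e^p_{|p|}\}$ (with $-e^p_i=e^{-p}_i$) yields a permutation in $S_{|p|}$, well defined on the class $[p]$; collecting these gives the $S$-coordinate. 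For $\sigma\in\Sigma_0$ I set $\Omega(\sigma):=\big((\text{these permutations})_{[p]},[\psi_\sigma]\big)$ and append $\varepsilon_\sigma$ to land in $\big(S\wr\mathrm{Aut}([V],[F],\lambda)\big)\times\{1,-1\}$. Injectivity is immediate since $E=\bigcup_{p\in V_1}p$ and the recorded data reconstruct $\sigma$ on every edge. That $\Omega$ is a homomorphism is a bookkeeping computation identical in spirit to the wreath-product identity verified in the proof of Proposition~\ref{prop:wlog2ConnectedPrague}, the sign entering as a central multiplicative factor.

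The main obstacle, and the final step, is surjectivity: given an automorphism $[\psi]$ of $([V],[F],\lambda)$, within-piece permutations, and a sign, I must exhibit a genuine $\sigma\in\Sigma$ realizing them. Here one must lift $[\psi]$ from the unoriented quotient to a map on directed pieces and cycles preserving the \emph{directed} incidence $p\subseteq c$, not merely its unoriented shadow; since each simple cycle is the disjoint union of the pieces below it, preserving unoriented incidence is automatic, and the real content is orienting coherently. I would argue that such lifts exist and form a torsor under the global flip: starting from the identity lift, orient one piece, propagate the forced orientations around the cycles through it, and invoke connectivity of $([V],[F])$ to reach every piece, the absence of contradiction being guaranteed precisely because an orientation-preserving lift already exists; the two choices for the initial orientation differ by $\iota$ and produce the factor $\{1,-1\}$. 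Finally, discreteness of $E$ (Lemma~\ref{lem:graphs}) makes both $\Omega$ and $\Omega^{-1}$ continuous: a net $\sigma_\gamma\to\sigma$ pointwise is eventually constant on each finite piece and on each finite simple cycle, which is exactly convergence of the wreath, automorphism, and sign coordinates, so $\Omega$ is a homeomorphism.
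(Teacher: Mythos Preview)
Your overall architecture matches the paper's: reduce to $\Sigma$ via Proposition~\ref{prop:SigmaLIso} and Lemma~\ref{lem:graphs}, read off an automorphism of $([V],[F],\lambda)$ from the induced Boolean-algebra automorphism of $B$, record within-piece permutations, and append a sign. The paper does exactly this. But two of your steps are not yet arguments.

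\textbf{The sign.} You define $\varepsilon_\sigma$ after ``fixing a reference orientation for each class in $[V_1]$'' and then claim the resulting sign is independent of the piece by propagating along common simple directed cycles. This fails for arbitrary choices of representatives. If $p_1,p_2$ are your representatives and $c$ is a simple directed cycle through edges of both, then $c$ contains $p_1$ or $-p_1$, and $p_2$ or $-p_2$; you have no control over which, so applying $\sigma$ tells you only that $\sigma(p_1)$ and $\pm\sigma(p_2)$ lie on the common cycle $\sigma(c)$, which says nothing about whether the signs at $p_1$ and $p_2$ agree. The paper avoids this entirely: it observes that the directed bipartite graph $(V,F)$ has exactly \emph{two} connected components (swapped by the involution $x\mapsto -x$), defines $\psi_T$ on the directed data, and sets $\varepsilon_T=+1$ or $-1$ according to whether $\psi_T$ preserves or switches these components. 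No representatives are needed. Your argument becomes correct only once you choose all representatives from a single component, which is precisely the paper's observation in disguise.

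\textbf{Surjectivity.} Your justification ``the absence of contradiction being guaranteed precisely because an orientation-preserving lift already exists'' is circular: existence of the lift is what you are proving. The paper's fix is again the two-component fact: given $[\psi]\in\mathrm{Aut}([V],[F],\lambda)$, for each directed piece $p$ there is a unique $q$ with $[q]=[\psi]([p])$ lying in the \emph{same} component of $(V,F)$ as $p$; setting $\sigma(e_i^p):=e_{s_{[q]}(i)}^{\varepsilon q}$ gives $\sigma$ explicitly, and one checks by hand that it is symmetric and carries simple directed cycles to simple directed cycles. No propagation or monodromy argument is needed, because the quotient $V\to[V]$ restricts to a graph isomorphism on each component.

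In short, both gaps close with the single structural fact the paper isolates and you do not: $(V,F)$ has exactly two connected components interchanged by $-$, each mapping isomorphically onto $([V],[F])$. Once you add that, your proof and the paper's coincide.
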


\begin{proof}
Let $T:\F(M)\to\F(M)$ be a linear isometry. We shall define a triple $\Omega(T)=:((s_{[p]}^T)_{[p]\in [V_1]},[\psi_T],\varepsilon_T)$, where $[\psi_T]\in\mathrm{Aut}([V],[F],\lambda)$, $(s_{[p]}^T)_{[p]\in [V_1]}\in S$, and $\varepsilon_T\in\{1,-1\}$.

We first define a symmetric automorphism $\psi_T$ of $(V,F,\lambda)$; i.e. an automorphism satisfying $\psi_T(-x)=-\psi_T(x)$ for each $x\in V$. It will canonically induce an automorphism $[\psi_T]$ of $([V],[F],\lambda)$. By Proposition~\ref{prop:bijectionEext}, $T$ induces a symmetric simple cycle preserving bijection $\sigma_T:E\to E$, which in turn induces a Boolean algebra automorphism $\pi_T:B\to B$, which is itself symmetric. Since $\sigma_T$ is simple cycle preserving it induces a bijection $\psi_2:V_2\to V_2$. Since $\pi_T$ is a Boolean algebra automorphism it induces a bijection $\psi_1:V_1\to V_1$ and since $\pi_T$ is induced by $\sigma_T$, we have $\lambda(\psi_1(v)) = \lambda(v)$ for $v\in V_1$. We define $\psi_T: V\to V$ by $\psi_1\cup\psi_2$.

It is easy to observe that $(v,w)\in F$ if and only if $(\psi_T(v),\psi_T(w))\in F$.
Since $\psi_T$ is symmetric, $[\psi_T]\in\mathrm{Aut}([V],[F],\lambda)$ is well-defined. Next we define the sign $\varepsilon_T$. Since $(V,F)$ has two connected components, $\psi_T$ either preserves both of them, or switches them. In the former case, we set $\varepsilon_T=1$, in the latter we set $\varepsilon_T=-1$.

Now we define $(s_{[p]}^T)_{[p]\in [V_1]}$. Fix $p\in V_1$. Recall that $p=\{e_1^p,\ldots,e_{|p|}^p\}$. We also have $\psi^{-1}_T(p)=\{e_1^{\psi_T^{-1}(p)},\ldots,e_{|\psi_T^{-1}(p)|}^{\psi_T^{-1}(p)}\}$, and $|p|=|\psi^{-1}_T(p)|$. Since $\sigma^{-1}_T[p]=\psi^{-1}_T(p)$, for every $i\leq |p|$ there is $j\leq |p|$ such that $\sigma_T(e_i^{\psi_T^{-1}(p)})=e_j^p$. By the choice of the enumeration of the edges of $p$ and $-p$ and since $\sigma_T$ is symmetric, we have $\sigma_T(e_i^{\psi_T^{-1}(-p)}) = \sigma_T(-e_i^{\psi_T^{-1}(p)}) = -e_j^p = e_j^{-p}$. Thus we may safely set $s_{[p]}^T(i)=j$. It is clear that $s_{[p]}^T$ is a permutation of $\{1,\ldots,|p|\}$.

Let us show that $\Omega$ is injective. Let $T_1\neq T_2\in\mathrm{LIso}(\F(M)$. If $\pi_{T_1}\neq \pi_{T_2}$, then there exists an atom of $B$ where $\pi_{T_1}$ and $\pi_{T_2}$ differ. Thus by definition, there exists $p\in V_1$, where $\psi_{T_1}(p)\neq\psi_{T_2}(p)$. If $[\psi_{T_1}(p)]=[\psi_{T_2}(p)]$, then $\Omega(T_1)_3=-\Omega(T_2)_3$, where $\Omega(T_i)_3$ is the projection onto the third coordinate, i.e. the element of $\{1,-1\}$. If $[\psi_{T_1}(p)]\neq [\psi_{T_2}(p)]$, then $\Omega(T_1)_1\neq \Omega(T_2)_1$, where $\Omega(T_i)_1\in\mathrm{Aut}([V],[F],\lambda)$ is the projection onto the first coordinate.

So suppose that $\pi_{T_1}=\pi_{T_2}$. However, since $\sigma_{T_1}\neq\sigma_{T_2}$ there exists $e\in E$ such that $\sigma_{T_1}(e)\neq\sigma_{T_2}(e)$. We have that $e=e_i^p$, for some unique $p\in V_1$ and $i\leq |p|$. It follows that $(\Omega(T_1)_2)_{[\psi_{T_1}(p)]}(i)\neq (\Omega(T_2)_2)_{[\psi_{T_2}(p)]}(i)$, where $(\Omega(T_j)_2)_{[\psi_{T_i}(p)]}\in S_{|p|}$ is the corresponding projection.\bigskip

Let conversely $((s_{[p]})_{[p]\in [V_1]},[\psi],\varepsilon)\in (S\wr\mathrm{Aut}([V],[F],\lambda))\times\{1,-1\}$. We shall construct a symmetric simple cycle preserving bijection $\sigma:E\to E$. By Proposition~\ref{prop:canonicalIsometries}, there exists then a linear isometry $T:\F(M)\to\F(M)$ such that $\sigma = \sigma_T$. It will be then easy to check that $\Omega(T)=((s_{[p]})_{[p]\in [V_1]},[\psi],\varepsilon)$.

Pick $e\in E$. There is a unique piece $p\in V_1$ such that $e\in p$. So there is $i\leq |p|$ such that $e=e_i^p$. Let $q\in V_1$ be the unique piece such that $[q]=[\psi]([p])$ and $q$ and $p$ are in the same connected component of $(V,F)$.
We define  
\begin{equation}\label{eq:sigmaFromAut}
\sigma(e)=\sigma(e_i^p):=e_{s_{[q]}(i)}^{\varepsilon q}.
\end{equation}
Notice that $-e=e_i^{-p}$ by definition and so $[\psi]([-p])=[\psi]([p])$, however we now have that $-q$ is in the same connected component as $-p$. Thus \[\sigma(-e)=e_{s_{[q]}(i)}^{-\varepsilon q}=-\sigma(e),\] showing that $\sigma$ is symmetric.

It is clear that $\sigma$ is injective.  To check it is surjective, pick $e\in E$ which is again equal to $e_i^p$, for some $p\in V_1$ and $i\leq |p|$. Let $q$ be the unique piece such that $[q]=[\psi^{-1}]([p])$ and $q$ and $p$ lie in the same connected component. Then there is $j\leq |p|$ such that $s_{[p]}(j)=i$. It is straightforward to check that then $\sigma(e_j^{\varepsilon q})=e_i^p$.

We need to check that $\sigma$ is simple cycle preserving. Let $c\in V_2$ be a simple directed cycle. The cycle $c$ is a disjoint union of some pieces $p_1,\ldots,p_n\in V_1$, where $\{p_1,\ldots,p_n\}=\{p\in V_1\colon \exists f\in F\;(f\text{ connects }p\text{ and }c)\}$. Since $[\psi]$ is a graph automorphism we have that $\{[\psi]([p_i])\colon i\leq n\}=\{[p]\in [V]\colon \exists [f]\in [F]\; ([f]\text{ connects }[\psi]([p])\text{ and }[\psi]([c]))\}$. Let $c'\in V_2$ and $q_1,\ldots,q_n\in V_1$ be such that $[c']=[\psi]([c])$, $[q_i]=[\psi]([p_i])$, for $i\leq n$, and they lie in the same connected component as $c$ and $\{p_1,\ldots,p_n\}$ if $\varepsilon=1$, and in the other component of $c$ and $\{p_1,\ldots,p_n\}$ if $\varepsilon=-1$. Notice that $c'=\bigcup_{i\leq n} q_i$. By definition of $\sigma$, for each $i\leq n$ we have $\sigma[p_i]=q_i$. It follows that \[\sigma[c]=\bigcup_{i\leq n} \sigma[p_i]=\bigcup_{i\leq n} q_i=c',\] and since $c$ was arbitrary, we get that $\sigma$ preserves simple cycles. Thus, there exists $T\in \mathrm{LIso}(\F(M))$ such that $\sigma = \sigma_T$ and by the above we easily deduce that $[\psi_T] = [\psi]$. Pick any $p\in V_1$, then we have that $\varepsilon_T=1$ iff $\sigma(p)$ is in the same connected component as $p$, which is by the above equivalent to the fact that $\varepsilon=1$. Finally, it is now easy to check that $(s_{[p]}^T)_{[p]\in [V_1]} = (s_{[p]})_{[p]\in [V_1]}$.\medskip

We have proved that $\Omega$ is a surjection, thus it is bijective. It remains to prove it is a continuous group homomorphism with a continuous inverse.

Since the map $T\mapsto\sigma_T$ is a topological group isomorphism by Proposition~\ref{prop:SigmaLIso}, it suffices to check that the map $\sigma\in\Sigma\mapsto \big((s^{T_\sigma}_{[p]})_{[p]},[\psi_{T_\sigma}],\epsilon_{T_\sigma}\big)$ is a homeomorphic homomorphism, where $\Sigma$, as in Proposition~\ref{prop:SigmaLIso}, is the topological group of all symmetric bijections of $\edg[M]$ satisfying \ref{it:wA1} and \ref{it:wA2}.

Let us first check it is a group homomorphism. Fix $\sigma_1,\sigma_2\in\Sigma$. In the sequel, we shall write $\sigma$ for $\sigma_1\circ\sigma_2$, $\big((s_{[p]})_{[p]},\psi,\epsilon\big)$ for $\big((s^{T_{\sigma_1\circ\sigma_2}}_{[p]})_{[p]},[\psi_{T_{\sigma_1\circ\sigma_2}}],\epsilon_{T_{\sigma_1\circ\sigma_2}}\big)$, and $\big((s^i_{[p]})_{[p]},\psi_i,\epsilon_i\big)$ for $\big((s^{T_{\sigma_i}}_{[p]})_{[p]},[\psi_{T_{\sigma_i}}],\epsilon_{T_{\sigma_i}}\big)$, $i=1,2$. It is easy to deduce that $\psi_{T_{\sigma_1}\circ T_{\sigma_2}} = \psi_{T_{\sigma_1}}\circ \psi_{T_{\sigma_2}}$. Thus, we have $[\psi] = [\psi_1]\circ [\psi_2]$ and $\varepsilon=\varepsilon_1\varepsilon_2=:\delta$.

Since $\big((s^1_{[p]})_{[p]},\psi_1,\epsilon_1\big) \big((s^2_{[p]})_{[p]},\psi_2,\epsilon_2\big)=\big((s^1_{[p]}s^2_{[\psi_1^{-1}(p)]})_{[p]},\psi_1\circ\psi_2,\epsilon_1\varepsilon_2\big)$ we need to verify that $s_{[p]}=s^1_{[p]} s^2_{[\psi_1^{-1}(p)]}$ for every $[p]\in[V_1]$.

Fix therefore $[p]\in [V_1]$ and $i\leq |p|$. 
We have
\[\begin{split}
s^1_{[p]} s^2_{[\psi_1^{-1}(p)]}(i) = j & \Leftrightarrow e_j^p = \sigma_1\Big(e_{k}^{\psi_1^{-1}(p)}\Big) \; \& \; e_k^{\psi_1^{-1}(p)} = \sigma_2\Big(e_i^{\psi_2^{-1}\psi_1^{-1}(p)}\Big)\\
& \Leftrightarrow e_j^p = \sigma\Big(e_i^{\psi^{-1}(p)}\Big)\Leftrightarrow s_{[p]}(i) = j,
\end{split}\]
which proves that $s_{[p]}=s^1_{[p]} s^2_{[\psi_1^{-1}(p)]}$ for every $[p]\in[V_1]$ and therefore $\Omega$ is a group homomorphism.

It remains to show that the mapping $\Sigma\ni \sigma \mapsto \big((s^{T_\sigma}_{[p]})_{[p]},[\psi_{T_\sigma}],\epsilon_{T_\sigma}\big)$ is homeomorphism, which however easily follows from the fact that the topology on $\edg[\MM]$ is discrete by Lemma~\ref{lem:graphs} and so a net $(\sigma_\gamma)$ converges pointwise if and only if it is pointwise eventually constant.
\end{proof}

\begin{remark}
Although this section was formulated, in order to ease the notation, only for Lipschitz-free spaces over connected graphs, all the results are valid as well for $\Lip_0$-spaces over connected graphs. In particular, we have a complete description of the linear isometry group of $\Lip_0(G)$, whenever $G$ is a connected graph. This follows, by Remark~\ref{rem:rigidGraphs}, from the fact that $\Lip_0(G)$ has a strongly unique predual.
\end{remark}

\section{Examples}\label{sec:examples}
This penultimate section complements the theory developed in this paper. In the first part of the section we show that a certain interesting and extensively studied class of metric spaces in metric geometry belongs to the class of Lipschitz-free rigid Prague spaces. In the second part we present examples showing that converses to various results from Section~\ref{sec:rigid} do not hold.
 
\subsection{Carnot groups as examples of Lipschitz-free rigid Prague spaces}
The theorem below presents one of our main examples of Lipschitz-free rigid spaces. We note that the rigidity of the same class for the $1$-Wasserstein spaces was obtained in \cite{BTV-Carnot} - by different methods.
\begin{theorem}\label{thm:carnotExample}
Let $(\bG,d_N)$ be a non-abelian Carnot group endowed with a metric induced by a homogeneous horizontally strictly convex norm. Then $(\bG,d_N)$ is a Prague space, which is moreover Lipschitz-free rigid.
\end{theorem}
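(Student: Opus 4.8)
The plan is to derive both assertions from a description of the preserved extreme points of $B_{\F(\bG)}$, i.e.\ of the set $\edg[\bG]$, and then to invoke Theorem~\ref{thm:3ConnectedImpliesRigid}. Two structural facts will be used throughout: $d_N$ is left-invariant, so by the induced isometry of $\F(\bG)$ a pair $(x,y)$ is a preserved extreme point iff $(e,x^{-1}y)$ is, where $e$ is the neutral element; and $\bG$ with a homogeneous norm is a proper metric space, so closed balls are compact. Hence it is enough to decide, for each $g\neq e$, whether $(e,g)\in\edg[\bG]$.

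The heart of the argument is the claim that $(e,g)\in\edg[\bG]$ whenever $g$ is \emph{non-horizontal}, i.e.\ $\log g\notin V_1$. A compactness argument reduces the uniform gap required by Fact~\ref{fact:extremePoints} to the absence of a nontrivial metric segment: if the gap failed for some $\varepsilon_0>0$ there would be points $z_n$ with $\min\{N(z_n),N(z_n^{-1}g)\}\geq\varepsilon_0$ and $N(z_n)+N(z_n^{-1}g)\to N(g)$, and properness of $\bG$ would furnish a subsequential limit $z_*\in[e,g]\setminus\{e,g\}$. It therefore suffices to show $[e,g]=\{e,g\}$; writing a hypothetical interior point as $z$ and setting $a=z$, $b=z^{-1}g$ (so $ab=g$), this amounts to the strict triangle inequality $N(ab)<N(a)+N(b)$ for all $a,b\neq e$ with $ab$ non-horizontal. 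This is exactly where both hypotheses enter: projecting onto the first layer through the homomorphism $\pi\colon\bG\to(V_1,+)$ and invoking horizontal strict convexity pins the equality case down to positively parallel horizontal parts $\pi(a),\pi(b)$, after which the homogeneity of $N$ and the non-abelian group law---whose bracket produces a nonzero higher-layer component unless $a,b$ lie on a common horizontal one-parameter subgroup---force $a=\exp(u)$ and $b=\exp(\lambda u)$ with $u\in V_1$, $\lambda>0$, so that $ab=\exp((1+\lambda)u)$ is horizontal, contrary to assumption. I expect this equality analysis to be the main obstacle of the whole proof.

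Granting the claim, admissibility follows without difficulty. Since $\bG$ is non-abelian, $V_2\neq\{0\}$, so for any $x$ the point $y=x\exp(v)$ with $v\in V_2\setminus\{0\}$ gives a non-horizontal edge $(x,y)$; hence $\ver[\bG]=\bG$ is (trivially) dense. The graph $G_{ext}(\bG)$ contains all non-horizontal edges on the continuum $\bG$, so it is connected and even $3$-connected: after deleting any two vertices, any remaining pair $x,y$ is joined by a two-edge path $x\to p\to y$ through a point $p$ chosen away from the two deleted vertices and off the horizontal cosets $x\exp(V_1)$ and $y\exp(V_1)$, so that both displacements are non-horizontal. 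For \eqref{eq:pragueDistances}, a non-horizontal pair is realized by its single edge while the triangle inequality bounds every $E$-path from below; for a horizontal pair $x,y$ the horizontal coset segment joining them is a $d_N$-geodesic that is approximated by $\edg[\bG]$-paths whose interior vertices are nudged slightly off the horizontal subgroup, with total length tending to $d_N(x,y)$. Thus $\edg[\bG]$ is admissible and $\bG$ is a Prague space.

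Finally, $\bG$ is in particular a weak Prague space with $\edg[\bG]$ that is $3$-connected, so Theorem~\ref{thm:3ConnectedImpliesRigid} applies directly and shows that $\bG$ is Lipschitz-free rigid, completing the proof.
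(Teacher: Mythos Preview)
Your overall strategy coincides with the paper's: identify $\edg[\bG]$ (or at least a large subset of it) via the metric segment criterion, use properness of $(\bG,d_N)$ to pass from the uniform condition in Fact~\ref{fact:extremePoints} to $[e,g]=\{e,g\}$, then verify $\ver[\bG]=\bG$, the Prague identity \eqref{eq:pragueDistances} via two-edge paths through a nearby off-horizontal point, and $3$-connectedness, finally invoking Theorem~\ref{thm:3ConnectedImpliesRigid}. The paper does exactly this (quoting \cite[Proposition~2.3]{AP20} for the properness reduction instead of your compactness sketch), so the architecture is the same.

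Where you diverge is the equality analysis, which you flag as ``the main obstacle'', and where your sketch is both unnecessarily elaborate and, as written, not correct. Your route first projects via $\pi:\bG\to V_1$, deduces that $\pi(a),\pi(b)$ are positively parallel, and then appeals to ``homogeneity of $N$ and the non-abelian group law'' to force $a,b$ onto a common horizontal one-parameter subgroup. That second step does not follow: knowing only that $\pi(a)$ and $\pi(b)$ are parallel says nothing about the higher-layer components of $a$ and $b$; there is no mechanism in your argument that rules out, say, $a=(u,v,0,\dots)$ with $v\neq 0$. The point is that no such argument is needed. The very \emph{definition} of a horizontally strictly convex norm (item \ref{it:convexNorm}) asserts directly that $N(a*b)=N(a)+N(b)$ with $a,b\neq e$ forces $a,b\in l(z)$ for some $z\in\Rea^{N_1}$; since $l(z)$ is a subgroup, $a*b\in l(z)\subset H_\bG$ is horizontal, contradicting the assumption. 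So what you anticipated as the main obstacle is in fact a one-line consequence of the hypothesis, and the paper treats it as such.
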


The class of spaces considered in Theorem~\ref{thm:carnotExample} includes many important metric spaces considered by numerous authors, see Examples~\ref{ex:carnotExamples} where more details are provided.

Before we come to the proof of Theorem~\ref{thm:carnotExample}, we recall the  relevant information about Carnot groups. Every Carnot group is as a topological group homeomorphic to $\Rea^n$. For the informed reader, we note that more precisely every Carnot group is a simply-connected and connected nilpotent Lie group whose Lie algebra admits a stratification. We provide more comments on this in Remark~\ref{rem:carnotRef}. The properties of Carnot groups we need are the following.

\begin{enumerate}[label=(Car\arabic*)]
    \item\label{it:carOne} Carnot group as a topological group is (canonically) isomorphic to $\bG = (\Rea^N,*)$, where on $\Rea^N$ we have the usual Euclidean topology and $*$ is a real analytic group operation defined by a polynomial. After this (canonical) identification, we can decompose the Carnot group $(\Rea^N,*)$ as $\Rea^N = \Rea^{N_1}\oplus\ldots\oplus\Rea^{N_r}$ with $N_1+\dots+N_r=N$, where $r>1$ if and only if $\bG$ is not abelian, and find a family of Carnot group isomorphisms $\{\delta_{\lambda}\}_{\lambda>0}$ (called Carnot-dilations) such that: $$\delta_{\lambda}(x^{(1)},\dots,x^{(r)})=(\lambda x^{(1)},\dots,\lambda^r x^{(r)}),$$
where $x^{(i)}\in\mathbb{R}^{N_i}$ for $i=1,\dots,r$.
    \item\label{it:HomogeneousMetric}  A map $N: \bG \to\mathbb{R}_{\geq0}$ is called a norm on $\bG$ if it satisfies
\begin{itemize}
\item[(i)] $N(g)=0\quad\Longleftrightarrow\quad g=(0,\dots,0)$,
\item[(ii)] $N(g^{-1})=N(g)$, for all $g\in \bG$,
\item[(iii)] $N(g\ast g')\leq N(g)+N(g')$, for all $g,g'\in \bG$.
\end{itemize}
Every norm $N$ induces a left-invariant metric $d_{N}$ (i.e. metric satisfying $d_N(g_0\ast g,g_0\ast g')=d_N(g,g')$ for all $g_0,g,g'\in \bG$) by the formula $d_N(g,g') := N(g^{-1}\ast g')$. A norm $N:\bG \to\mathbb{R}_{\geq0}$ on a Carnot group is called \emph{homogeneous} if
$N(\delta_{\lambda}(g))=\lambda N(g)$ for all $\lambda>0$ and for all $g\in\mathbf{G}$.

Whenever $d_N$ is a metric induced by a homogeneous norm on $\bG = (\Rea^N,*)$, then $d_N$ is compatible with the topology of $\bG$ and $(\bG,d_N)$ is proper and complete.

\item\label{it:convexNorm} Let $\bG = (\Rea^N,*)$ be a Carnot group as in \ref{it:carOne} (so on $\bG$ we have the decomposition and Carnot-dilations as in \ref{it:carOne}). Given $z\in \Rea^{N_1}$, we say $l(z):=\{(sz,0,\dots,0)\setsep s\in\Rea\}\subset \bG$ is \emph{horizontal line through the origin}. It is a subgroup of $\bG$ (in fact, it is what is called a \emph{one-parameter subgroup}). In particular, for $s,t\in\Rea$, we have $(sz,0,\ldots,0)*(tz,0,\ldots,0)=((s+t)z,0,\ldots,0)$. The set of all the points on all the horizontal lines through the origin is denoted as $H_\bG$, that is, $H_\bG:=\bigcup_{z\in \Rea^{N_1}}\{l(z)\}=\{(z,0,\ldots,0)\colon z\in \Rea^{N_1}\}$. Notice therefore that $\bG\neq H_\bG$ if and only if $\bG$ is not abelian.

A homogeneous norm $N$ on $\bG=(\Rea^N,*)$ is said to be \emph{horizontally strictly convex} if whenever $g,g' \in \bG\setminus\{(0,...,0)\}$ are such that $N(g\ast g') = N(g) +N(g')$, then there exists $l(z)\in H_\bG$ such that $g,g'\in l(z)$.

Suppose that $d_N$ is the metric induced by a homogeneous horizontally strictly convex norm on $\bG=(\Rea^N,*)$. Then it is easy to check that whenever $g_1,g_2,g \in \bG$ ($g\neq g_1$, $g\neq g_2$) are such that $d_N(g_1, g_2) = d_N (g_1, g) + d_N (g, g_2)$, there exists $z\in\Rea^{N_1}$ such that $g_1^{-1}*g_2\in l(z)$, in particular $g_2\in g_1*H_\bG:=\{g_1*h\colon h\in H_\bG\}$.

We note that on any Carnot group there exists a homogeneous horizontally strictly convex norm.
\end{enumerate}

\begin{remark}\label{rem:carnotRef}
In this remark we comment on Carnot groups for readers who have some knowledge of Lie theory. Carnot groups are simply-connected and connected nilpotent Lie groups $G$ whose Lie algebra $\mathfrak{g}$ admits a stratification, i.e. a direct sum decomposition $\mathfrak{g}=\bigoplus_{i=1}^r V_i$ such that for every $i<r$, $[V_1,V_i]=V_{i+1}$. Since $G$ and $\mathfrak{g}$ are nilpotent, the Baker-Campbell-Hausdorff formula has only finitely many non-vanishing terms and therefore may be used to globally define, using polynomials, group operation $\ast$ on $\mathfrak{g}$. Since $(\mathfrak{g},\ast)$ and $G$ are then two simply-connected connected Lie groups with Lie algebra $\mathfrak{g}$, they must be equal. This explains much of \ref{it:carOne}, i.e. why without loss of generality $G$ is $\Rea^N$ with polynomially defined multiplication and the decomposition $G=\Rea^N=\Rea^{N_1}\oplus\ldots\oplus\Rea^{N_r}$ corresponds to the stratification $V_1\oplus\ldots\oplus V_r$ on $\mathfrak{g}$, where the dimension of $V_i$, for $i\leq r$, is $N_i$. It also explains why for a fixed $z\in\Rea^{N_1}$, $l(z)$ is a subgroup. Indeed, given $s,t\in\Rea$, $sz$ and $tz$ commute as elements of $\mathfrak{g}$, i.e. $[sz,tz]=0$, so by the Baker-Campbell-Hausdorff formula we have $((s+t)z,0\ldots,0) = (sz,0\ldots,0)*(tz,0\ldots,0)$. Note also that by definition, $r=1$ is equivalent to $[\mathfrak{g},\mathfrak{g}]=\{0\}$, which in turn is equivalent to the fact that $\bG$ is abelian.

We refer the reader to \cite{HilNeeb} for a general background on Lie groups, including nilpotency and the Baker-Campbell-Hausdorff formula.  We also refer to \cite{CarnotBook} which is more specialized to Carnot groups.
In the paragraph below we suggest some references, where the interested reader may find proofs of the above mentioned properties of Carnot groups together with some more interesting results.\medskip

The property \ref{it:HomogeneousMetric} follows from \cite[Proposition 2.26]{DS19}. The notion of horizontally strictly convex homogeneous norm mentioned in \ref{it:convexNorm} was introduced in \cite{BFS18} and it was proved in \cite{BTV-Carnot} that such a norm exists on every Carnot group. For some natural examples we refer to Examples~\ref{ex:carnotExamples}.
\end{remark}

\begin{proof}[Proof of Theorem~\ref{thm:carnotExample}]Let $\bG = (\bG,d_N)$ be as in the assumptions. We may without loss of generality assume $\bG=(\Rea^N,*)$ as in \ref{it:carOne}. First, we claim that
$$\edg[\bG]=\{(g,g')\colon g'\notin g*H_\bG\}.$$
Indeed, fix two distinct points $g,g'\in\bG$. Since $\bG$ is proper, using \cite[Proposition 2.3]{AP20} we obtain that  $m_{g,g'}\in \F(\bG)$ is a preserved extreme point if and only if $[g,g'] = \{g,g'\}$ which is in turn equivalent to the fact that  $g'\notin g* H_\bG$.
 
Next we show that for an arbitrary element $g\in\bG$ and for arbitrary $\varepsilon>0$ we can choose a $g_{\varepsilon}\in\bG$ such that $(g,g_{\varepsilon})\in\edg[\bG]$ and $d_{N}(g,g_{\varepsilon})<\varepsilon$. Put $H_{\bG}(g):=g\ast H_{\bG}$. Since 
left-translation is an isometry (and thus a homeomorphism), we get that $H_{\bG}(g)$ is a closed set with empty interior. Here we used that $d_N$ is compatible with the Euclidean topology of $\mathbb{R}^N$, and that $H_{\bG}$ has an empty interior since $\mathrm{dim}(H_{\bG})=N_1<N$ as $\bG$ is not abelian. On the other hand, $B(g,\varepsilon)$ (the $d_{N}$-ball centered at $g\in\bG$ with radius $\varepsilon$) is an open set, and therefore $B(g,\varepsilon)\setminus H_{\bG}(g)\neq\emptyset$, and for all $g_{\varepsilon}\in B(g,\varepsilon)\setminus H_{\bG}(g)$, using the observation in the paragraph above, we have $(g,g_{\varepsilon})\in\edg[\bG]$. Consequently, $\ver[\bG]=\bG$.\\

Finally, if $g_1,g_2\in \bG$, $g_1\neq g_2$, then for all $\varepsilon>0$ the same argument as above gives us an element $g_{\varepsilon}\in B(g_1,\varepsilon)\setminus\big(H_{\bG}(g_1)\cup H_{\bG}(g_2)\big)$. For such a $\geps$  we have
$d_N(\geps,g_2)\leq d_N(\geps,g_1) + d_N(g_1,g_2)<\varepsilon+d_N(g_1,g_2)$ and thus $d_N(g_1,\geps)+d_N(\geps,g_2)\in [d_N(g_1,g_2), 2\varepsilon+d_N(g_1,g_2)]$.
Using this simple observation and the fact that $(g_1,\geps),(g_2,\geps)\in\edg[\bG]$ we have
\begin{equation*}
\begin{split}
d_N(g_1,g_2)&\leq \inf\Big\{\sum_{i=1}^k d_N(e_i)\colon (e_i)_{i=1}^k\in \edg[\bG]^k\text{ is a walk from }g_1\text{ to }g_2\Big\}\\
&\leq\inf_{\varepsilon}\{d_N(g_1,\geps)+d_N(\geps,g_2)\}=d_N(g_1,g_2).
\end{split}
\end{equation*}

This shows that $\bG$ is a Prague space. Moreover, since by the above there are infinitely many paths between any two points, we have that in particular $\edg[\bG]$ is $3$-connected and so $\bG$ is Lipschitz-free rigid by Corollary~\ref{cor:3ConnectedGraph}.
\end{proof}

\begin{examples}\label{ex:carnotExamples}Let us mention important examples of metrics on Carnot groups to which our Theorem~\ref{thm:carnotExample} applies.
\begin{itemize}
    \item Recall that the Heisenberg group $\HH$ is the Carnot group $\HH=(\Rea^{2n+1},*)$, where for $x,y\in\Rea^n$ and $t\in\Rea$ we have
$$(x,y,t)\ast (x',y', t') := (x+x',y+y',t+t'+2\sum_{i=1}^n (x'_iy_i-x_iy'_i))$$
and Carnot-dilations $(\delta_\lambda)_{\lambda > 0}$ are given by $\delta_\lambda(x,y,t):=(\lambda x,\lambda y,\lambda^2 t)$. Examples of horizontally strictly convex homogeneous norms on $\HH$ are e.g. the Heisenberg-Kor\'anyi norm $\norm{\cdot}_H$ given by $\|(x,y,t)\|_H :=
\big(\norm{(x,y)}_E^4 + t^2\big)^{\frac{1}{4}}$ for $(x,y,t)\in\HH$ or the Lee and Naor norm $\norm{\cdot
}_{LN}$ given by $\|(x,y,t)\|_{LN}=\sqrt{\|(x,y,t)\|_H^2 + \|(x,y)\|_E^2}$ for $(x,y,t)\in\HH$, where $\norm{\cdot}_E$ denotes the Euclidean norm. We refer to \cite{BFS18}  for more details and examples of horizontally strictly convex homogeneous norms on $\HH$.
\item Given a Carnot group $\bG = (\Rea^N,*)$, the Hebisch-Sikora norm on $\norm{\cdot}_{HS}:\bG \to\Rea_+$ is defined as
\[
\norm{q}_{HS}:=\inf\{t > 0\colon \delta_{1/t}(q)\in B(0, r)\},\qquad q\in \bG,
\]
where $B(0, r)\subset \Rea^N$ is the usual Euclidean ball in $\Rea^N$ centered at the origin, with radius $r$. In \cite{BTV-Carnot} the authors proved that there exists $r_0 > 0$ such that for all $0 < r < r_0$ the function $\norm{\cdot}_{HS}$ defined above is a horizontally stricly convex homogeneous norm on $\bG$. We refer to \cite{BTV-Carnot} and references therein for more information about Hebish-Sikora norms and their applications.
\end{itemize}
Finally, we note that our Theorem~\ref{thm:carnotExample} does not apply to Carnot groups endowed with the standard Carnot-Carath\'eodory distance, because geodesic metric spaces are not Prague spaces. However, we note that the Carnot-Carath\'eodory distance on Heisenberg groups is the path metric associated to the Kor\'anyi norm (see \cite[Corollary 2.3.5]{Lu-thesis}). We do not know if there is such a close relation between the Carnot-Carath\'eodory distance and other horizontally strictly convex norms.
\end{examples}

\subsection{Counter-examples from Section~\ref{sec:rigid}}

In the first proposition we prove the claim mentioned in Example~\ref{ex:rigidNot2Connected}.
\begin{proposition}\label{prop:rigidNot2Connected}
Consider the subset $I:=\{0\}\times [0,1] \subset \Rea^2$ and point $x_0:=(1,0)\in \Rea^2$. Then the metric space $\MM = (I\cup \{x_0\},\|\cdot\|_2)$ is a weak Prague space which is Lipschitz-free rigid, but $\edg[\MM]$ is not $2$-connected.
\end{proposition}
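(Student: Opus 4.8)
The plan is to determine $G_{ext}(\MM)$ explicitly and then reduce Lipschitz-free rigidity to a classification of admissible edge-bijections through Theorem~\ref{thm:1-1correspondencePrague}. Writing the points of $I$ as $(0,t)$, $t\in[0,1]$, I first compute $\edg[\MM]$. For distinct $(0,s),(0,t)\in I$ the metric midpoint $(0,\tfrac{s+t}{2})\in I$ lies strictly between them, so by Fact~\ref{fact:extremePoints} $m_{(0,s),(0,t)}$ is not even extreme; on the other hand the Euclidean segment from $x_0=(1,0)$ to $(0,t)$ meets the $y$-axis only at $(0,t)$, so $[x_0,(0,t)]=\{x_0,(0,t)\}$, and the preserved-extreme criterion of Fact~\ref{fact:extremePoints} holds because $u\mapsto\sqrt{1+u^2}+|t-u|-\sqrt{1+t^2}$ is continuous and strictly positive for $u\neq t$, hence bounded below on each set $\{u:|u-t|\geq\varepsilon\}$. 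Thus, with $e_t:=(x_0,(0,t))$,
\begin{equation*}
\edg[\MM]=\{\,e_t,\,-e_t\ :\ t\in[0,1]\,\},
\end{equation*}
so $G_{ext}(\MM)$ is a star with centre $x_0$. Hence $\ver[\MM]=\MM$ is dense and $G_{ext}(\MM)$ is connected, i.e. $\MM$ is a weak Prague space; being a tree it contains no simple cycle through two distinct edges, so $\edg[\MM]$ is not $2$-connected by Fact~\ref{fact:edgesConnected}. Finally, since $\MM$ has finite diameter a surjective dilation must be an isometry, and as $x_0$ is the only point all of whose distances to other points are $\geq1$, every isometry fixes $x_0$ and then restricts to the identity of $I$; therefore $\Dil(\MM)=\{\Id\}$ and \emph{Lipschitz-free rigidity of $\MM$ is equivalent to $\LIso(\F(\MM))=\{\pm\Id\}$}.

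For the latter I invoke Theorem~\ref{thm:1-1correspondencePrague}\ref{it:weakPragueIsometries}: each $T\in\LIso(\F(\MM))$ equals $T_\sigma$ for a symmetric bijection $\sigma$ of $\edg[\MM]$ satisfying \ref{it:wA1}, \ref{it:wA2}, \ref{it:wA3}. As the only simple cycles of the star are the pairs $\{e,-e\}$, conditions \ref{it:wA1} and \ref{it:wA2} hold for \emph{every} symmetric bijection, so all the content is in \ref{it:wA3}. Write $\sigma(e_t)=\eta(t)\,e_{\tau(t)}$ with $\eta(t)\in\{\pm1\}$ and $\tau$ a bijection of $[0,1]$; since any $\edg$-path between two leaves telescopes (cancelling at $x_0$) to the two-edge path through $x_0$, condition \ref{it:wA3} is exactly
\begin{equation*}
\big\|-\eta(s)\sqrt{1+s^2}\,m_{e_{\tau(s)}}+\eta(t)\sqrt{1+t^2}\,m_{e_{\tau(t)}}\big\|=|s-t|,\qquad s\neq t.\tag{$\star$}
\end{equation*}
If $\eta(s)\neq\eta(t)$ then the left-hand molecule is $\pm\big(\sqrt{1+s^2}\,m_{e_{\tau(s)}}+\sqrt{1+t^2}\,m_{e_{\tau(t)}}\big)$, which tested against $z\mapsto\pm d(z,x_0)$ has norm $\geq\sqrt{1+s^2}+\sqrt{1+t^2}\geq2>|s-t|$, impossible; so $\eta$ is constant, and after possibly replacing $T$ by $-T$ I may assume $\eta\equiv1$, i.e. $\sigma(e_t)=e_{\tau(t)}$.

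It remains to force $\tau=\Id$, and the key input is a single $1$-Lipschitz test function. Testing the molecule in $(\star)$ against $f$ with $f(x_0)=0$, $f\big((0,\tau(s))\big)=|\tau(s)-\tau(t)|-\sqrt{1+\tau(t)^2}$ and $f\big((0,\tau(t))\big)=-\sqrt{1+\tau(t)^2}$ (which is $1$-Lipschitz by the triangle inequalities), its value equals the left-hand side of
\begin{equation*}
\sqrt{1+t^2}-\tfrac{\sqrt{1+s^2}}{\sqrt{1+\tau(s)^2}}\big(\sqrt{1+\tau(t)^2}-|\tau(s)-\tau(t)|\big)\ \leq\ |s-t|,\tag{$\star\star$}
\end{equation*}
and is therefore $\leq\|\cdot\|=|s-t|$ by $(\star)$, which is $(\star\star)$. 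Writing the molecule as $\sqrt{1+s^2}\big(m_{e_{\tau(t)}}-m_{e_{\tau(s)}}\big)+(\sqrt{1+t^2}-\sqrt{1+s^2})m_{e_{\tau(t)}}$ and testing $m_{e_{\tau(s)}}-m_{e_{\tau(t)}}$ against $z\mapsto d(z,(0,\tau(s)))$ shows that, as $t\to s$ in $(\star)$, one is forced to have $\tau(t)\to\tau(s)$; hence $\tau$ is a continuous bijection of $[0,1]$, so a monotone homeomorphism fixing $\{0,1\}$. The decreasing case is excluded by putting $s=0$ (so $\tau(0)=1$) and letting $t\to1$ (so $\tau(t)\to0$) in $(\star\star)$, where the left side tends to $\sqrt2>1$. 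Thus $\tau$ is increasing with $\tau(0)=0$, and $(\star\star)$ at $s=0$ reads $\sqrt{1+t^2}-t\leq\sqrt{1+\tau(t)^2}-\tau(t)$; since $u\mapsto\sqrt{1+u^2}-u$ is strictly decreasing this gives $\tau(t)\leq t$ for all $t$. Applying the identical reasoning to $T^{-1}$, whose bijection is $\sigma^{-1}$ and whose leaf map is $\tau^{-1}$, gives $\tau^{-1}(t)\leq t$, i.e. $\tau(t)\geq t$. Hence $\tau=\Id$, so $\sigma=\Id$ and $T=\Id$ (respectively $T=-\Id$ when $\eta\equiv-1$), proving $\LIso(\F(\MM))=\{\pm\Id\}$.

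The main obstacle is the implication $(\star)\Rightarrow\tau=\Id$: because $\MM$ is only a \emph{weak} Prague space, conditions \ref{it:wA1} and \ref{it:wA2} are vacuous and the entire rigidity must be squeezed out of the metric content of \ref{it:wA3}, i.e. out of the three-point norms in $(\star)$. The two devices that make this work are the explicit test functions producing the one-sided estimate $(\star\star)$ and the self-improvement through $\sigma^{-1}$ that promotes $\tau(t)\leq t$ to equality; the only genuinely computational point is verifying that $(\star\star)$ is a valid lower bound, equivalently solving the small Kantorovich linear program for a molecule supported on three points.
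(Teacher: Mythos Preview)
Your proof is correct and follows the paper's overall strategy: compute $\edg[\MM]$ explicitly as the star centred at $x_0$, show the sign function $\eta$ (the paper's $\varepsilon$) is constant, and then prove the induced leaf-bijection $\tau$ (the paper's $f$) is the identity. The difference lies in how you carry out the last step.

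The paper invokes an exact three-point norm formula (from \cite{CuthJohanis}) to obtain an \emph{equation}~(7.1), from which it first extracts that $f$ is $\sqrt{2}$-Lipschitz, increasing, with $f(0)=0$ and $f(1)=1$, and then runs a two-case analysis ($t\geq f(t)$ versus $t\leq f(t)$), in each case recognising an injective auxiliary function to force $f(t)=t$. You instead produce a single explicit $1$-Lipschitz test function to get the one-sided inequality $(\star\star)$, deduce continuity and monotonicity of $\tau$ directly from $(\star)$, and then use $(\star\star)$ at $s=0$ together with the strict monotonicity of $u\mapsto\sqrt{1+u^2}-u$ to obtain $\tau(t)\leq t$; the matching inequality $\tau(t)\geq t$ comes from applying the identical argument to $\sigma^{-1}$, which is legitimate since by Theorem~\ref{thm:1-1correspondencePrague} the inverse of $T_\sigma$ corresponds to $\sigma^{-1}$ and hence to the leaf-map $\tau^{-1}$.

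Your route is somewhat more self-contained: it avoids citing the exact Kantorovich norm formula and replaces the paper's case split by a clean symmetry argument. The paper's route, on the other hand, extracts more quantitative information along the way (the $\sqrt{2}$-Lipschitz bound) and never needs to pass to $\sigma^{-1}$. Both are valid; the only place your write-up is slightly terse is the continuity step, where the specific test function $z\mapsto d(z,(0,\tau(s)))$ does work but requires a short computation to see that it yields a lower bound on $\|m_{e_{\tau(s)}}-m_{e_{\tau(t)}}\|$ uniform in $|\tau(s)-\tau(t)|\geq\varepsilon$ (alternatively, one can simply appeal to the well-known fact that norm convergence of elementary molecules forces convergence of their supports).
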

\begin{proof}
For any $x,y\in \MM$ we have $[x,y] = \{x,y\}$ if and only if $x_0\in \{x,y\}$. Fixing $y\in I$ and picking any $z\neq y\in I$ we have $d(x_0,y)+d(x_0,z)\geq 2$ so clearly by Fact~\ref{fact:extremePoints}, $m_{x_0,y}\in \ext B_{\F(\MM)^{**}}$. Therefore, we have $\edg = \{(x,x_0),(x_0,x)\colon x\in I\}$ and $\ver = \MM$. Thus, we easily observe that $\edg$ is weakly admissible and since there are no cycles in $\edg$, $\edg$ is not $2$-connected. It remains to check $\MM$ is Lipschitz-free rigid.

Let $\sigma:\edg\to\edg$ be a bijection satisfying conditions \ref{it:wA1}, \ref{it:wA2} and \ref{it:wA3}. By Theorem~\ref{thm:1-1correspondencePrague}, it suffices to check that then there exists $\varepsilon\in \{\pm 1\}$ such that $\sigma(e)=\varepsilon e$, $e\in \edg$. Since $\sigma$ is a bijection, there exists a bijection $f:[0,1]\to [0,1]$ and $\varepsilon:[0,1]\to \{\pm 1\}$ such that $\sigma(x_0,(0,t)) = \varepsilon(t)(x_0,(0,f(t)))$ for every $t\in [0,1]$.

First, we shall show that $\varepsilon$ is a constant mapping. For $t>0$, using \ref{it:wA3} we obtain that for the $\edg$-path $((0,0),x_0),(x_0,(0,t))$ from $(0,0)$ to $(0,t)$ we have 
\[\begin{split}
t & = \normb{Big}{\varepsilon(0)d(x_0,(0,0))m_{(0,f(0)),x_0} + \varepsilon(t)d(x_0,(0,t))m_{x_0,(0,f(t))}} \\
& = \normb{Big}{\varepsilon(0)\frac{d(x_0,(0,0))}{d(x_0,(0,f(0)))}\delta_{(0,f(0))} - \varepsilon(t)\frac{d(x_0,(0,t))}{d(x_0,(0,f(t)))}\delta_{(0,f(t))}},
\end{split}\]
which implies that $\varepsilon(t) = \varepsilon(0)$ as otherwise we would have \[t = \normb{Big}{\frac{d(x_0,(0,0))}{d(x_0,(0,f(0)))}\delta_{(0,f(0))} + \frac{d(x_0,(0,t))}{d(x_0,(0,f(t)))}\delta_{(0,f(t))}} = d(x_0,(0,0)) + d(x_0,(0,t)) > 1,\]
which is in contradiction with $t\leq 1$. Since $t>0$ was arbitrary, this proves that $\varepsilon$ is a constant map, so we can without loss of generality assume that $\sigma(x_0,(0,t)) = (x_0,(0,f(t)))$ for every $t\in [0,1]$ and it remains to prove that $f$ is the identity.

Similarly as above, given $s,t\in[0,1]$ and $\edg$-path $((0,s),x_0),(x_0,(0,t))$ from $(0,s)$ to $(0,t)$, using \ref{it:wA3} we obtain
\[
|t-s| = \normb{Big}{\frac{d(x_0,(0,s))}{d(x_0,(0,f(s)))}\delta_{(0,f(s))} - \frac{d(x_0,(0,t))}{d(x_0,(0,f(t)))}\delta_{(0,f(t))}}.\]
By the well-known formula \cite[Lemma 11]{CuthJohanis}, supposing that $\frac{1+t^2}{1+f(t)^2}\geq \frac{1+s^2}{1+f(s)^2}$ we have
\begin{equation}\label{eq:exNot2Connected}
|t-s| = \sqrt{1+t^2} + \Big(|f(t)-f(s)| - \sqrt{1+f(t)^2}\Big)\sqrt{\frac{1+s^2}{1+f(s)^2}}
\end{equation}
(and of course if $\frac{1+t^2}{1+f(t)^2}\leq \frac{1+s^2}{1+f(s)^2}$ then the above holds with $s$ and $t$ interchanged). We shall prove in a series of claims that this already implies that $f$ is the identity mapping.
\begin{claim}
The function $f$ is $\sqrt{2}$-Lipschitz. Moreover, it is a strictly increasing homeomorphism with $f(0)=0$ and $f(1)=1$.
\end{claim}
\begin{proof}[Proof of Claim 1]
Supposing that $\frac{1+t^2}{1+f(t)^2}\geq \frac{1+s^2}{1+f(s)^2}$, using \eqref{eq:exNot2Connected} and simplifying we obtain
\[\begin{split}
|t-s|(\sqrt{1+f(s)^2}) + \sqrt{(1+s^2)(1+f(t)^2)} & = \sqrt{(1+t^2)(1+f(s)^2)} + |f(t)-f(s)|\sqrt{1+s^2}\\
& \geq \sqrt{(1+s^2)(1+f(t)^2)} + |f(t)-f(s)|\sqrt{1+s^2},
\end{split}\]
which implies that
\[
\frac{|f(t)-f(s)|}{|s-t|}\leq \sqrt{\frac{1+f(s)^2}{1+s^2}}
\]
and, changing the roles of $t$ and $s$ if $\frac{1+t^2}{1+f(t)^2}\geq \frac{1+s^2}{1+f(s)^2}$ does not hold, we obtain that for any $t,s\in [0,1]$ we have
\[
\frac{|f(t)-f(s)|}{|s-t|}\leq \max\Big\{\sqrt{\frac{1+f(t)^2}{1+t^2}}\colon t\in [0,1]\Big\} \leq \sqrt{\frac{1+1}{1+0}} = \sqrt{2}.
\]
Thus, the function $f$ is $\sqrt{2}$-Lipschitz and therefore $f:[0,1]\to [0,1]$ is continuous bijection, so it is a homeomorphism and so it is either srictly increasing function with $f(0)=0$ and $f(1)=1$ or strictly decreasing function with $f(1)=0$ and $f(0)=1$. Since for $t=1$ and $s=0$ we have 
\[
\frac{1+t^2}{1+f(t)^2} = \frac{2}{1+f(1)^2} \geq 1\geq \frac{1}{1+f(0)^2}= \frac{1+s^2}{1+f(s)^2},
\]
from \eqref{eq:exNot2Connected} we obtain 
\[
1=\sqrt{2} + (1-\sqrt{1+f(1)^2})\frac{1}{\sqrt{1+f(0)^2}}
\]
and since we have $\{f(0),f(1)\} = \{0,1\}$, this implies that $f(0)=0$ and $f(1)=1$ and, by the above, the function $f$ is strictly increasing.
\end{proof}

\begin{claim}
We have $f(t) = t$ for every $t\in [0,1]$.
\end{claim}
\begin{proof}[Proof of Claim 2]
Assume that we have $t\geq f(t)$. Then since $\tfrac{1+t^2}{1+f(t)^2}\geq 1 = \tfrac{1+0^2}{1+f(0)^2}$, we obtain from \eqref{eq:exNot2Connected} applied to $t$ and $s=0$ that, using Claim 1 we have
\[
t = \sqrt{1+t^2} + f(t) - \sqrt{1+f(t)^2},
\]
which implies that for the function $h(x):=x-\sqrt{1+x^2}$ we have $h(t) = h(f(t))$, but since $h$ is one-to-one on $[0,1]$, it follows that $t=f(t)$.

Finally, assume that $t\leq f(t)$. Then since $1 = \tfrac{1+0^2}{1+f(0)^2}\geq \tfrac{1+t^2}{1+f(t)^2}$, from \eqref{eq:exNot2Connected} we obtain
\[
t = 1 + (f(t) - 1)\sqrt{\frac{1+t^2}{1+f(t)^2}},
\]
which implies that for the function $g(x):=\tfrac{x-1}{\sqrt{1+x^2}}$ we have $g(t) = g(f(t))$, but since $g$ is one-to-one on $[0,1]$, it follows that $t=f(t)$.
\end{proof}

Thus, $f$ is identity and so $\MM$ is Lipschitz-free rigid.
\end{proof}

Next we provide the example mentioned in Remark~\ref{rem:rigidNot3Connected}.
\begin{proposition}\label{prop:rigidNot3Connected}
There exists a finite graph $G=(V,E)$ which is $2$-connected, but not $3$-connected such that for every simple cycle-preserving bijection $\sigma:E\to E$ there exists a graph isomorphism $f:V\to V$ such that $\sigma(\{x,y\}) = \{f(x),f(y)\}$ for every $\{x,y\}\in E$.

In particular, $G$ is a Lipschitz-free rigid space with $\edg$ not being $3$-connected.
\end{proposition}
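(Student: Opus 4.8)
The plan is to exhibit one explicit finite graph and to verify the cycle-rigidity property by hand. The construction is dictated by understanding what \emph{obstructs} cycle-rigidity in a $2$-connected graph that is not $3$-connected. Three phenomena produce simple cycle-preserving bijections that are not induced by graph isomorphisms: (a) a vertex of degree $2$, whose two incident edges $e,e'$ lie in exactly the same simple cycles and so may be transposed while every cycle is fixed setwise; (b) a pair of parallel edges (automatically excluded here, since our graphs are simple); and (c) a \emph{Whitney twist} at a $2$-separation $\{u,v\}$ one of whose sides is symmetric under the transposition $u\leftrightarrow v$, which yields a cycle-preserving transposition of the corresponding edges. To kill (a) I would insist on minimum degree $\geq 3$, and to kill (c) I would arrange the (essentially unique) $2$-separation so that neither side admits an automorphism swapping the two cut vertices.

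Concretely, I would let $G$ be built from two disjoint copies of the wheel $W_4$ (a $4$-cycle together with a central vertex joined to all four rim vertices) by gluing them along one spoke: identify the two centres into a single vertex $u$, identify one rim vertex of each copy into a single vertex $v$ (where in each copy $v$ is joined to its centre by a spoke), and delete that spoke $\{u,v\}$. The result is a simple graph on $8$ vertices in which $\{u,v\}$ is a $2$-cut (deleting $u$ and $v$ disconnects the remaining vertices of one copy from those of the other), so $G$ is $2$-connected but not $3$-connected; one checks directly that every vertex has degree $\geq 3$ (here $u$ has degree $6$, $v$ has degree $4$, all others degree $3$). Crucially, restoring the deleted spoke recovers a copy of $W_4$ on each side, so each side, completed by the marker edge $uv$, is $3$-connected, and in $W_4$ no automorphism sends the centre to a rim vertex; hence neither side has an automorphism swapping $u$ and $v$.

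For the verification, let $\sigma\colon E\to E$ be a simple cycle-preserving bijection. First I would show that $\sigma$ respects the decomposition of $E$ into the two sides $E_A,E_B$: since both sides become $3$-connected once the marker edge is restored, $\{u,v\}$ is the only nontrivial $2$-separation of $G$, so $\sigma$ either fixes $\{E_A,E_B\}$ or interchanges the two (isomorphic) sides. In either case $\sigma$ induces, on each side completed by the marker edge, a simple cycle-preserving bijection of a copy of $W_4$; by Whitney's theorem for $3$-connected graphs (Corollary~\ref{cor:3ConnectedGraph}, or classically \cite[Section 5.3]{OMatroidBook}) this is induced by a graph isomorphism of $W_4$ fixing the marker edge setwise. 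Since that isomorphism fixes the centre, it fixes $u$ and $v$ individually rather than swapping them; assembling the two side-isomorphisms, together with the possible interchange of the two sides (which is a genuine automorphism of $G$, the copies being identical), produces a single vertex bijection $f\colon V\to V$ with $\sigma(\{x,y\})=\{f(x),f(y)\}$ for all $\{x,y\}\in E$. The ``In particular'' clause then follows from Proposition~\ref{prop:rigidSufficient}: by Lemma~\ref{lem:Eext=E} we have $\edg[G]=E$, which is $2$-connected by Fact~\ref{fact:edgesConnected}, so $G$ is a Prague space, and the constancy of the sign $\varepsilon$ is obtained exactly as in the proof of Theorem~\ref{thm:3ConnectedImpliesRigid}.

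I expect the main obstacle to be making the first step rigorous, namely that every cycle-preserving bijection must preserve the $2$-separation $\{u,v\}$ and cannot secretly swap $u$ and $v$. Conceptually this is the assertion that the cycle matroid of $G$ is the $2$-sum of the two rigid cycle matroids of $W_4$, whose only extra symmetries are Whitney twists, and that the asymmetry of the chosen spoke rules these out; but since the paper avoids matroid language, I would instead carry this out concretely, by listing the short simple cycles through $u$ and through $v$ and checking that their incidence pattern forces $\sigma$ to send $u$-edges to $u$-edges within each side. The remaining steps are then routine finite bookkeeping.
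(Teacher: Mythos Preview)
Your construction is correct and your plan is sound, but it follows a genuinely different route from the paper.  The paper builds a much larger graph: three disjoint complete graphs $K_{i_1},K_{i_2},K_{i_3}$ together with three extra vertices $a_1,a_2,a_3$, where $a_j$ is joined to $K_{i_k}$ (for $k\neq j$) by $|E_{j,k}|\geq 3$ edges, all six numbers $|E_{j,k}|$ distinct, and each $i_k$ large enough.  The argument there is entirely by direct counting: an edge $e\in E_{j,k}$ is characterised as lying in exactly $|E_{j,k}|-1$ triangles, so $\sigma(E_{j,k})=E_{j,k}$ for every pair; this forces $\sigma$ to preserve each $3$-connected block $G_j=K_{i_j}\cup\bigcup_k E_{k,j}$, and Whitney on each block finishes.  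The distinct degrees of the $a_k$ within each block pin them down individually, so the three block-automorphisms glue.

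Your two-wheel example is considerably smaller (eight vertices versus several dozen) and conceptually cleaner, since it uses the $2$-sum decomposition explicitly: $M(G)$ is the $2$-sum of two copies of $M(W_4)$, and the asymmetry of the marker spoke (centre versus rim) kills the Whitney twist.  The one step you flag as the obstacle --- that $\sigma$ must fix or swap the two sides $E_A,E_B$ --- does need care, but your proposed direct attack succeeds: the four triangles of $G$ all lie within one side, the relation ``share a triangle'' on edges has exactly two components, namely $E_A\setminus\{\text{the two $v$-edges}\}$ and $E_B\setminus\{\text{the two $v$-edges}\}$; the remaining four $v$-edges are then assigned to sides by observing that $\{v,r_2^A\}$ lies in a $4$-cycle with two edges from the side-$A$ triangle component but in no such cycle with two side-$B$ edges.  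After that, your extension of $\sigma$ to the marker edge is justified exactly as you indicate (a $u$--$v$ path in $E_A$ completes to a simple cycle with any $u$--$v$ path in $E_B$, and $\sigma$ preserves such cycles), and the assembly step is immediate because every automorphism of $W_4$ fixes the centre.

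The trade-off is that the paper's approach, while heavier in size, is entirely self-contained: the triangle-count bookkeeping replaces any appeal, even implicit, to the uniqueness of the Tutte $2$-sum decomposition, which is the structural fact your argument ultimately rests on.
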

\begin{proof}
In this proof by $K_n$ we denote complete graph with $n$ vertices. Our graph $G$ consists of three disjoint complete graphs $K_{i_1}$, $K_{i_2}$ and $K_{i_3}$ together with three more vertices $a_1$, $a_2$ and $a_3$, where each $a_j$, $j\in \{1,2,3\}$ is connected by edges $E_{j,k}$ with vertices from $K_{i_k}$, $k\in \{1,2,3\}$, where $E_{j,j} = \emptyset$ and $|E_{j,k}|\geq 3$ are distinct natural numbers.

\begin{center}
\includegraphics[scale=0.8]{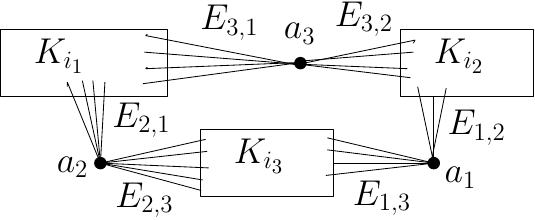}
\end{center}
We claim that if moreover $\min\{i_1,i_2,i_3\} > 2 + \max\{|E_{(j,k)}|\colon j,k\in \{1,2,3\}, j\neq k\}$, then this will be the graph we are looking for.

It is easy to observe that $G$ is $2$-connected and not $3$-connected. Moreover, if we denote by $G_j$, $j=1,2,3$ subgraphs with edges $\bigcup_{k=1}^3 E_{k,j}\cup K_{i_j}$, then each $G_j$ is $3$-connected. Let $\sigma:E\to E$ be a cycle-preserving bijection. We claim that then $\sigma(G_j)=G_j$ for every $j=1,2,3$.

Indeed, first we observe that $e\in E_{j,k}$ if and only if $e$ is contained in exactly $|E_{j,k}|-1$ cycles of length three, which is a property preserved by $\sigma$, and since $|E_{j,k}|$ are distinct numbers, we have 
\begin{equation}\label{eq:ejkFixed}
\sigma(E_{j,k}) = E_{j,k},\qquad j,k=1,2,3.
\end{equation}
Next, for each distinct edges $e,e'\in E_{j,k}$ there is a unique $f\in K_{i_k}$ such that $\{e,e',f\}$ is a cycle and since by \eqref{eq:ejkFixed} we have $\{\sigma(e),\sigma(e')\}\subset E_{j,k}$, we obtain $\sigma(f)\in K_{i_k}$ because $\{\sigma(e),\sigma(e'),\sigma(f)\}$ is a cycle. Thus,
\begin{equation}
\forall k\in\{1,2,3\} \; \exists f\in K_{i_k}: \sigma(f)\in K_{i_k}.
\end{equation}
Now, pick any $f'\in K_{i_k}$ and consider now the edge $f\in K_{i_k}$ with $\sigma(f)\in K_{i_k}$. Then there are at least $i_k-2$ disjoint cycles containing both $\{f,f'\}$, so since we have $\sigma(f)\in K_{i_k}$ and $\max\{|E_{j,k}|\colon j=1,2,3\}<i_k-2$, we obtain that $\sigma(f')\in K_{i_k}$. Since $f'\in K_{i_k}$ was arbitrary, we have $\sigma(K_{i_k}) = K_{i_k}$ for $k=1,2,3$ which together with \eqref{eq:ejkFixed} gives that $\sigma(G_j)=G_j$ for $j=1,2,3$, which finishes the proof of the claim above.

Since each $G_j$ is $3$-connected, by Corollary~\ref{cor:3ConnectedGraph} we obtain that there are graph isomorphisms $f_j:G_j\to G_j$ such that $\sigma(\{x,y\}) = \{f_j(x),f_j(y)\}$ for each edge $\{x,y\}$ from $G_j$. Since $|E_{j,k}|$ are distinct numbers, vertices $a_k$ in graphs $G_j$ are uniquely determined by their degrees, and so $f_j|_{\{a_1,a_2,a_3\}\setminus\{a_j\}}$ is identity. Thus, $f = f_1\cup f_2\cup f_3:G\to G$ is a well-defined graph isomorphism satisfying $\sigma(\{x,y\}) = \{f(x),f(y)\}$ for any edge $\{x,y\}$ from $G$. This finishes the proof that $G$ is the graph we wanted to find.

The ``In particular" part then immediately follows. Indeed, using the same arguments as in the proof of Theorem~\ref{thm:3ConnectedImpliesRigid} (together with Lemma~\ref{lem:Eext=E}) we see that $G$ is a Lipschitz-free rigid Prague space.
\end{proof}

\section{Concluding remarks and open problems}\label{sec:concluding}
\subsection{Remarks}
A famous open problem asks whether $\FF(\Rea^n)$ and $\FF(\Rea^m)$, and also $\F(\Int^n)$ and $\F(\Int^m)$, are isomorphic if $n,m\geq2$, $n\neq m$. We cannot answer this exact question but our results provide answers to some other related open problems.\medskip

\noindent{\bf (1)} Consider $\Int^n$, for $n\geq 2$, equipped either with the inherited euclidean distance or the $\ell_1$-distance, also known as the Manhattan distance in this context, which is the graph metric for the canonical graph structure on $\Int^n$. It is routine to check that it is a Prague space with $\edg$ being $3$-connected - in the case of the Manhattan distance, simply a $3$-connected graph. By Theorem~\ref{thm:3ConnectedImpliesRigid}, $\F(\Int^n)$ is Lipschitz-free rigid and since $\Int^n$ is uniformly discrete and so does not admit non-isometric dilations, it follows that $\LIso(\F(\Int^n))$ is equal to the product of $\{-1,1\}$ with the isometry group $\mathrm{Iso}(\Int^n)$ of $\Int^n$ (see Proposition~\ref{prop:rigidLIso}), which is the same for both the Euclidean and Manhattan distances. The group $\mathrm{Iso}(\Int^n)$ as a semidirect product $F_n\ltimes T_n$, where $T_n$ is the group of translations equal to $\Int^n$ itself and $F_n$ is the finite group of isometries fixing $0$ which consists of rotations and reflections. In particular, $\LIso(\F(\Int^n))$ and $\LIso(\F(\Int^m))$ differ for different $n\neq m\geq 2$. Indeed, assuming $n>m$, $\LIso(\F(\Int^m))$ is equal to $\{-1,1\}\times (F_m\ltimes \Int^m)$ and does not contain $\Int^n$ as a subgroup, while $\LIso(\F(\Int^n))$ does.

\begin{remark}
We were informed by Ram\'on Aliaga that it is possible to distinguish $\F(\Int^n)$ and $\F(\Int^m)$ isometrically also directly by counting the preserved extreme points. Such a direct approach seems no longer possible in the next more involved example where we really need to compute the isometry groups as an invariant that distinguishes the corresponding Lipschitz-free spaces.
\end{remark}

\noindent{\bf (2)} Consider for each $n\in\Nat$ the Heisenberg group $\He^n$ equipped with the Kor\' anyi metric, which is a topological group homeomorphic to $\Rea^{2n+1}$ (see Examples~\ref{ex:carnotExamples} for details). By Theorem~\ref{thm:carnotExample}, $\He^n$ is Lipschitz-free rigid, i.e. every isometry of $\F(\He^n)$ is induced by $\varepsilon\in\{-1,1\}$ and an $a$-dilation $\phi:\He^n\to\He^n$. In fact, according to Proposition \ref{prop:rigidLIso}, $\{-1,1\}\times \operatorname{Dil}(\mathbb{H}^n)\ni (\varepsilon,\phi)\mapsto \varepsilon T_{\phi}\in \LIso(\F(\mathbb{H}^n))$ is an isomorphism of topological groups. Notice that in the Heisenberg case an $a$-dilation can be uniquely written as the composition of an isometry and a Carnot-dilation. Indeed, using the homogeneity of the metric, we have that $d(x,y)=\frac{1}{a} d(\phi(x),\phi(y))=d(\delta_{\frac{1}{a}}\phi(x),\delta_{\frac{1}{a}}\phi(y))$, and thus $\phi(x)=\delta_{a}\Big(\delta_{\frac{1}{a}}\phi(x)\Big)$, where $\delta_{\frac{1}{a}}\phi(x)$ is an isometry. This means that $\LIso(\F(\mathbb{H}^n))=\{-1,1\}\times \operatorname{Dil}(\mathbb{H}^n)$ can be topologically identified with $\{-1,1\}\times \mathbb{R}_+\times\operatorname{Iso}(\mathbb{H}^n)$. We claim that if $m < n$ then the spaces $\F(\mathbb{H}^n)$ and $\F(\mathbb{H}^m)$ are not isometric. Indeed, if they were then their isometry groups would coincide. Our task is now to show that $\{-1,1\}\times \mathbb{R}_+\times\operatorname{Iso}(\mathbb{H}^n)$ and $\{-1,1\}\times \mathbb{R}_+\times\operatorname{Iso}(\mathbb{H}^m)$ are not isomorphic. By \cite[Theorem 3.5]{TanYang}, $\operatorname{Iso}(\mathbb{H}^n)$ is the same group regardless of $\He^n$ being equipped with the left-invariant Riemannian distance or the Carnot-Carath\' eodory metric. Furthermore, by \cite[Corollary 2.3.5]{Lu-thesis} it is also the same group if $\He^n$ is equipped with the Kor\' anyi metric. In particular, it follows from the Myers-Steenrod theorem (\cite{MySt}) that $\operatorname{Iso}(\mathbb{H}^n)$ is a Lie group. Since $\Rea_+$ and $\{-1,1\}$ are Lie as well and a product of Lie groups is a Lie group we get that $\LIso(\F(\He^n))$ is a Lie group. Therefore if $\LIso(\F(\He^m))$ and $\LIso(\F(\He^n))$ were isomorphic their Lie algebras $\mathfrak{l}_m$, resp. $\mathfrak{l}_n$ would be isomorphic (see e.g. \cite[Corollary 9.1.10]{HilNeeb}). We shall reach a contradiction by proving that the dimensions of $\mathfrak{l}_m$ and $\mathfrak{l}_n$ are not equal. By again \cite[Theorem 3.5]{TanYang} $\operatorname{Iso}(\mathbb{H}^n)$ is a semidirect product $A_n\ltimes \He^n$, where the right side is the Heisenberg group itself acting by translations and $A_n$ is the group of isometries of $\He^n$ fixing $0$. Denoting, for $i=m,n$, the Lie algebra of $A_i$ and $\He^i$ $\mathfrak{a}_i$, resp. $\mathfrak{h}_i$ we get that $\rm{dim}(\mathfrak{l}_i)=\rm{dim}(\mathfrak{a}_i)+\rm{dim}(\mathfrak{h}_i)+\rm{dim}(\Rea_+)=\rm{dim}(\mathfrak{a}_i)+\rm{dim}(\mathfrak{h}_i)+1$ (by e.g. \cite[Proposition 9.2.25]{HilNeeb}). We have $\rm{dim}(\mathfrak{h}_i)=2i+1$, so we need to compute $\rm{dim}(\mathfrak{a}_i)$. Since by \cite[Corollary 3.6 (1)]{TanYang} the connected component of the unit of $A_i$ can be identified with the unitary group $U(i)$, we get that $\mathfrak{a}_i$ is isomorphic to the Lie algebra of $U(i)$ which is known to have dimension $i^2$ (see e.g. \cite[Example 5.1.6 (v)]{HilNeeb}). Therefore we get that $\rm{dim}(\mathfrak{l}_i)=i^2+2i+2$ and we are done.

\subsection{Problems}
We find the following question the most pressing.
\begin{question}
Does there exist a Lipschitz-free rigid metric space $\MM$ such that $\bigcup\edg[\MM]$ is not dense in $\MM$?
\end{question}

A very interesting answer to the previous question would be answering positively the next question. However, even a negative answer to the next question would be interesting.

\begin{question}
Is $\Rea^d$, for $d\geq 2$, Lipschitz-free rigid? Either if it is equipped with the euclidean or with the Manhattan distance.
\end{question}

\begin{question}
Does every metric space isometrically embed into a Lipschitz-free rigid space that contains only one additional point?
\end{question}

\bibliography{ref}\bibliographystyle{siam}
\end{document}